\documentclass{amsart}
\usepackage{amssymb}
\usepackage{adjustbox}
\calclayout
\usepackage{etoolbox}
\usepackage{mathtools}
\usepackage{epsfig}  		
\usepackage{epic,eepic}       
\usepackage{tikz}
\usepackage{stmaryrd}
\usepackage{url}

\newtheorem{theorem}{Theorem}[section]
\newtheorem{proposition}[theorem]{Proposition}
\newtheorem{lemma}[theorem]{Lemma}
\newtheorem{cor}[theorem]{Corollary}

\theoremstyle{definition}

\theoremstyle{remark}
\newtheorem{remark}[theorem]{Remark}

\numberwithin{equation}{section}

\makeatletter
\let\origmaketitle\maketitle
\renewcommand{\maketitle}{\vspace*{-45pt}\origmaketitle}
\makeatother
\begin{document}

\title[Random multiplicative functions in the supercritical regime]{Partial sums of random multiplicative functions \\with supercritical divisor twists}

\author{Jad Hamdan}
\address{Mathematical Institute, University of Oxford}
\email{hamdan@maths.ox.ac.uk}

\begin{abstract}
Let $f$ be a Steinhaus random multiplicative function, and for $\alpha\in \mathbb{R}$, let $d_\alpha$ denote the $\alpha$-divisor function. For $\alpha\in(1,2)$ we establish that
\[
\mathbb{E}\bigg\{\Big|\frac{1}{\sqrt{x}}\sum_{n\leq x} d_\alpha(n)f(n)\Big|^{2q}\bigg\}
\ll
\frac{(\log x)^{2q(\alpha-1)}}{(\log\log x)^{3\alpha q/2}(1-\alpha q)+1}
\]
uniformly for $q\in [0,1/\alpha]$ and all large $x$. This matches predictions from the theory of {supercritical} Gaussian multiplicative chaos, and provides an analogue of a seminal result of Harper corresponding to the {critical} ($\alpha=1$) case. 

Our approach is based on studying the measure of level sets of an Euler product associated with $f$, and yields a short proof of Harper’s upper bound at $\alpha=1$ (implying Helson's conjecture at $q=1/2$). As an additional application, we obtain a conjecturally sharp bound for the pseudomoments of the Riemann zeta function in a certain parameter range, showing that 
\[
\lim_{T\to\infty}\frac{1}{T}\int_T^{2T}
\bigg|\sum_{n\leq x}\frac{d_\alpha(n)}{n^{1/2+it}}\bigg|^{2q} \mathrm{d}t
\ll
\frac{(\log x)^{2q(\alpha-1)}}{(\log\log x)^{3\alpha q/2}},
\]
for $\alpha\in (1,2)$ and small $q>0$.
This answers a question of Gerspach.
\end{abstract}

\maketitle

\vspace{-20px}

\section{Introduction}

Let $(Z_p)_p$ denote a sequence of independent and identically distributed random variables indexed by the primes, which are uniformly distributed on the complex unit circle. A \textit{Steinhaus random multiplicative function} $f:\mathbb{N}\to \mathbb{C}$ is defined as $f(p)=Z_p$ on the primes, and extended to $\mathbb{N}$ by making $f$ completely multiplicative. Originally introduced to model Archimedean characters $n\mapsto n^{it}$, the study of partial sums of $f$ and other, similarly defined random functions has grown into an active area of research in its own right \cite{OfirMoDick1,OfirMoDick2,OfirMoDickConvergence,MR4682955,HarperRMF,HarperII}. 

A landmark result in this area is Harper's \cite{HarperRMF} resolution of Helson's conjecture \cite{Helson}, which states that partial sums of $f(n)$ display a surprising amount of cancellation when compared to, say, sums of independent random variables. To be precise, Helson conjectured that $\mathbb{E}|\sum_{n\leq x}f(n)|=o(\sqrt{x})$, and Harper showed that the following holds uniformly in $q\in [0,1]$:
\begin{equation}\label{eq:Harper}
        \mathbb{E}\bigg\{\Big|\frac{1}{\sqrt{x}}\sum_{n\leq x}f(n)\Big|^{2q}\bigg\}\asymp \big(\sqrt{\log\log x}(1-q)+1\big)^{-q}.
\end{equation}
The key insight in \cite{HarperRMF} is that this phenomenon can be explained using a connection to the theory of  Gaussian multiplicative chaos (GMC), whose relevance to number theory first emerged with the conjectures of Fyodorov, Hiary and Keating on the magnitude of the Riemann zeta function on typical short intervals of the critical line \cite{FyodorovHiaryKeating, FyodorovKeating}. 

Harper's argument is comprised of two main steps. The first consists of comparing moments of $x^{-1/2}\sum_{n\leq x}f(n)$ to those of the integral of a random Euler product, by a sieve-theoretic argument and Parseval's theorem. The second is to realise that this integral approximates the total mass of a random measure---a smooth perturbation of  \textit{critical} GMC---only when scaled by $\sqrt{\log\log x}$, which is known as the Seneta-Heyde normalisation of critical GMC \cite{PowellSurvey}. The absence of this factor delivers the sought-after cancellation, and explains the right-hand side in \eqref{eq:Harper}. The bulk of the work lies in making this connection rigorous, which the author achieves by proving a non--Gaussian analogue of Girsanov's theorem, setting the stage for an application of the classical {ballot theorem}. A recent work of Gorodetsky and Wong \cite{GWHelson} showed that by appealing to pre-existing results in the GMC literature instead, namely Kahane's convexity inequality and a coupling due to Saksman and Webb \cite{SaksmanWebb}, one can significantly shorten this Gaussian comparison step and recover \eqref{eq:Harper}, albeit without the uniformity in $q$ near $1$.

The purpose of the current work is to present a new and self-contained way to establish this comparison, through the study of \textit{large deviations} of random Euler products. We use this to give a new and short proof of \eqref{eq:Harper} and, more importantly, to prove a conjecturally sharp upper bound for partial sums of $f\cdot d_\alpha$, where $d_\alpha$ is the $\alpha$-divisor function and $\alpha\in (1,2)$. In this regime, which corresponds to the \textit{supercritical} phase of GMC, recovering double-logarithmic corrections similar to \eqref{eq:Harper} requires a precise understanding of the maximum of random Euler products in short intervals. This also has applications to the study of \textit{pseudomoments} of the Riemann zeta function, discussed further below.

\subsection{Main results} Let $f$ denote a Steinhaus random multiplicative function, and $d_\alpha$ denote the $\alpha$-divisor function, defined through $\zeta(s)^\alpha=\sum_{n\geq 1}d_\alpha(n)n^{-s}$ where this series converges. Our main result is the following supercritical analogue of \eqref{eq:Harper}.

\begin{theorem}\label{thm:main} Fix $\alpha\in (1,2)$. Then uniformly in all large $x$ and $q\in [0,1/\alpha]$,
\[
    \mathbb{E}\bigg\{\Big|\frac{1}{\sqrt{x}}\sum_{n\leq x} d_\alpha(n)f(n)\Big|^{2q}\bigg\}\ll \frac{(\log x)^{2q(\alpha-1)}}{(\log\log x)^{(3\alpha q/2)}(1-\alpha q)+1}.
\]
\end{theorem}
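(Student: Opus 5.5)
We follow Harper's two-step strategy. The first step reduces the problem to an integral of a random Euler product. The second step analyses that integral through the sizes of the level sets of the product, rather than through a Girsanov-type change of measure.

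\textbf{Step 1 (reduction to Euler products).} Write $F_k(s)=\prod_{p\le x^{e^{-k}}}(1-f(p)p^{-s})^{-\alpha}$. Split $n\le x$ according to the size of the largest prime factor, $P(n)\in(x^{e^{-(k+1)}},x^{e^{-k}}]$. Condition on the $f(p)$ with $p$ small and use Hölder's inequality ($q\le 1/\alpha<1$). Then Parseval's theorem for Dirichlet series, as in Harper's argument (and its $d_\alpha$ analogue in Gerspach's work), reduces the main term to bounding
\[
\mathbb{E}\Big[\Big(\frac{1}{\log x}\int_{-1/2}^{1/2}|F_k(1/2+\tfrac{k}{\log x}+it)|^2\,dt\Big)^{q}\Big],
\]
summed over $0\le k\ll\log\log x$ with a weight $e^{-ckq}$. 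The large-prime factor contributes a factor $\asymp \log x$ of the usual shape. The shift $k/\log x$ makes the terms with large $k$ harmless, so the essential case is $k=0$ with the small shift $1/\log x$.

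\textbf{Step 2 (level sets).} Set $V(t)=\log|F(1/2+1/\log x+it)|$, so that $V$ has variance about $\frac{\alpha^2}{2}\log\log x$. Partition $[-1/2,1/2]$ into intervals of length $1/\log x$, on each of which $|F|$ is essentially constant. Bound the integral by
\[
\sum_{u}e^{2u}\cdot \mathrm{Leb}\{t: V(t)\approx u\}.
\]
In the supercritical range $\alpha>1$, the dominant contribution comes from $u$ near the maximum, $u\approx \alpha\log\log x-\frac{3\alpha}{4}\log\log\log x+O(1)$. This gives $\int|F|^2\approx (\log x)^{2\alpha-1}(\log\log x)^{-3\alpha/2}$, matching the right-hand side after dividing by $\log x$ and raising to the power $q$. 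To make this rigorous we introduce a barrier. Define the multiscale partial products $V_j(t)$ over primes $p\le x^{e^{-j}}$. The good event is that $V_{j}(t)\le \alpha(\log\log x-j)+B+\text{(barrier)}$ for all $j$ and all $t$. On the complement, a union bound over $\approx\log x$ intervals, together with Gaussian-type tail bounds for $V_j$ (from moment computations with independent $f(p)$, i.e.\ a Chernoff/tilting argument), gives probability $\ll e^{-cB}$. On the good event, the expected level-set measure at height $u$ is controlled by a ballot-type estimate for the random walk $j\mapsto V_j(t)$ staying below a line. We get this estimate via Gaussian approximation of the increments, which are sums of independent terms. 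The estimate yields a factor $(\log\log x)^{-3/2}$ from the endpoint entropic repulsion, weighted appropriately. Finally we take the $q$-th moment: for $q<1/\alpha$, Jensen on the good event combined with a tail sum over $B$ gives the bound. The factor $(1-\alpha q)$ in the denominator arises because the sum $\sum_B e^{(\alpha q\cdot 2/\alpha)B}\cdot e^{-2B}$-type series converges like $1/(1-\alpha q)$. At $q=1/\alpha$ the trivial bound (Parseval, without the $\log\log$ saving) takes over; this corresponds to the "$+1$" in the denominator.

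\textbf{Main obstacle.} The hard part is the barrier/ballot estimate. It requires showing that the maximum of $V$ over the interval carries the $-\frac{3\alpha}{4}\log\log\log x$ correction, uniformly enough to keep the $(1-\alpha q)$ dependence. This needs two-sided Gaussian comparisons for the increments at all scales (including the small primes, where they are not Gaussian), and control of the correlations between nearby $t$ so that the union bound over $t$ is not lossy. I would first prove an upper bound $\mathbb{P}(V_j(t)-\text{line stays below }0,\ V(t)\in[u,u+1])\ll (1+B)(1+B+\text{gap})(\log\log x)^{-3/2}e^{-u^2/(\alpha^2\log\log x)}$. I would do this by discretizing the walk at scales $x^{e^{-j}}$ and applying a Gaussian ballot theorem with a KMT-style or characteristic-function coupling for each increment.
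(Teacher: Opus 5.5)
Your overall architecture matches the paper's: Gerspach's reduction, level sets, a barrier event, a ballot bound from discretised Gaussian comparison, and interpolation over barrier heights. Two steps, however, fail as you state them.

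\emph{First gap: the regime $q$ near $1/\alpha$.} You say the trivial Parseval bound supplies the ``$+1$'' term there. It does not. Parseval and Jensen give $\big(\sum_{n\le x}d_\alpha(n)^2/x\big)^q\asymp(\log x)^{q(\alpha^2-1)}$, which exceeds the target $(\log x)^{2q(\alpha-1)}$ by a factor $(\log x)^{q(\alpha-1)^2}$. The reason is that in the supercritical phase the first moment of $\int_0^1|F_x|^{2\alpha}$ is carried by heights $\log|F_x|\approx\alpha\log\log x$, far above the typical maximum $\approx\log\log x$. The paper needs a separate input for this, Lemma~\ref{lem:crudesupercritical}.
\begin{itemize}
\item It restricts to $\{\max_h S_t\le t+A\}$, whose complement has probability $O(e^{-2A})$ by chaining and a plain union bound.
\item On this event the weight $e^{\gamma V-V^2/t}$ is cut off at $V=t+A$ rather than at its peak $\gamma t/2$.
\item Interpolating over $A$ up to order $t$ then gives $\mathbb{E}\{\mathcal{Z}_{\gamma,\sigma_k}^q\}\ll e^{qt(\gamma-2)}$ for all $q\le 2/\gamma$.
\end{itemize}
This lemma is also what closes the last term of your tail sum over $B$. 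The sharp-barrier estimates only hold for $B\le t/\log t$. Bounding the leftover term by the trivial first moment costs $e^{qt(\gamma/2-1)^2}$, which a factor $\mathbb{P}(\text{bad})^{1-q}\ge e^{-O(t/\log t)}$ cannot absorb.

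\emph{Second gap: the complement of the sharp good event.} Your barrier $\alpha(\log\log x-j)+B+(\text{bump})$ ends at $\alpha\log\log x+B$. With that endpoint the good-event estimate is dominated by heights at the endpoint and only saves $(\log\log x)^{-3q/2}$.

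To get $(\log\log x)^{-3\alpha q/2}$ the barrier must end at $\alpha(\log\log x-\tfrac34\log\log\log x)+B$, as $U_A^{\max}$ does. For that barrier your step ``a union bound over $\approx\log x$ intervals plus Gaussian tails gives $\ll e^{-cB}$'' fails. It yields $(\log x)\,\mathbb{P}\big(V>\alpha(\log\log x-\tfrac34\log\log\log x)+B\big)\asymp(\log\log x)\,e^{-2B/\alpha}$.

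You name this as the main obstacle, but the single-point ballot bound you plan to prove is only one ingredient. The missing idea is the first-crossing decomposition of Proposition~\ref{prop:maximum}:
\begin{itemize}
\item For each scale $j\in(t/2,t]$, take a union bound over only $e^j$ intervals.
\item On each interval, bound the probability that the walk at the left endpoint stays under the barrier up to the previous scale while $\max_{h\in[0,e^{-j})}S_j(\sigma_k+ih)$ crosses it.
\item If $S_j(\sigma_k)$ itself crosses, the ballot bound plus an exponential overshoot estimate suffices.
\item If instead the oscillation $\max_h S_j(\sigma_k+ih)-S_j(\sigma_k)$ is responsible, the paper chains and passes to tilted measures $\mathbb{Q}_v$. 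Under these the $f(p)$ stay independent and the walk gains only an $O(1)$ drift, so the Gaussian comparison and ballot theorem still apply.
\item The $\mathcal{C}\log(1+t-j)$ bump then makes the sum over $j$ converge to $O(Ae^{-2A-A^2/t})$.
\end{itemize}

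\emph{Smaller points.}
\begin{itemize}
\item The rate in your series should come from $(Ae^{-2A})^{1-q}(Ae^{(\gamma-2)A})^q=Ae^{-(2-\gamma q)A}$, because the restricted first moment grows like $e^{(\gamma-2)A}$. Your series $e^{2qB-2B}$ has rate $1-q$, not $1-\alpha q$.
\item In the reduction the shift is $\tfrac12-\tfrac{k}{\log x}$, not $\tfrac12+\tfrac{k}{\log x}$.
\item Large $k$ are harmless because of the truncation at $x^{e^{-k}}$, not because of the shift.
\end{itemize}
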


\noindent The proof proceeds by studying averages of $|F_x|^{2\alpha}$, where $F_x(s):=\prod_{p\leq x}(1-f(p)p^{-s})^{-1}$ is the Euler product associated to $f$, truncated at $x$. Harper's argument in \cite{HarperRMF} carries this out when $\alpha=1$, comparing the left-hand side to the $q$-th moment of $\int_0^1|F_x(\tfrac{1}{2}+ih)|^2\mathrm{d}h$. For larger $\alpha$, the same comparison naturally gives rise to moments of $\int_0^1|F_x(1/2+ih)|^{2\alpha}\mathrm{d}h$, and more generally suggests that the order of magnitude of partial sums twisted by a multiplicative function 
$g$ is governed by averages of $|F_x|^{\gamma}$  whenever
\begin{equation}\label{eq:growth}
        \sum_{p\leq t} |g(p)|^2 \sim \frac{\gamma}{2}\frac{t}{\log t}
\end{equation}
with a sufficiently strong error term (see \cite{Gerspach}, and the introduction of \cite{OfirMoDick1}). The divisor function $d_\alpha$
  provides the simplest example of such a twist (for which $\gamma=2\alpha$).

To illustrate the method, we begin by proving the following Euler product bound at $\gamma=2$.
\begin{proposition}\label{prop:2bound} Uniformly in $x$ sufficiently large and $q\in [0,1]$,
\begin{equation}\label{eq:2bound}
    \mathbb{E}\bigg\{\bigg(\frac{1}{\log x}\int_{0}^1 \big|F_x(1/2+ih)\big|^2\mathrm{d}h\bigg)^q\bigg\}\ll \big({\sqrt{\log\log x}(1-q)+1}\big)^{-q}.
\end{equation}
\end{proposition}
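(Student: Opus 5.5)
We work with the \emph{level sets} of the Euler product, following the strategy announced in the introduction. Write $F_x(h):=F_x(1/2+ih)$, $L:=\log\log x$, and for $h\in[0,1]$ put
\[
S_k(h):=\sum_{p\le x_k}\Re\log\big(1-f(p)p^{-1/2-ih}\big)^{-1},\qquad x_k=x^{e^{-k}},
\]
so that $S_k(h)$, viewed as a function of $k$, is close to a Gaussian random walk with increments of variance about $1/2$. Since $\mathbb{E}|F_x(h)|^2\asymp\log x$, the quantity inside the moment is the total mass of a critical chaos. The $(\sqrt{L})^{-q}$ saving should come from the ballot theorem: on the event that no $S_k$ exceeds a linear barrier, the mass is reduced by a factor $1/\sqrt{L}$.

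\textbf{Step 1: reduction to a barrier event.}
\begin{itemize}
\item By Jensen, $\mathbb{E}(\cdot)^q\le(\mathbb{E}\,\cdot)^q\ll 1$, so the bound is trivial when $1-q\ll 1/\sqrt{L}$. Assume from now on that $1-q\ge C/\sqrt{L}$.
\item For a parameter $v\ge 0$, let $G_v$ be the event that for every $h\in[0,1]$ and every $k\ge 0$ (discretized, with mesh $1/\log x_k$ in $h$),
\[
\log|F_{x_k}(h)|\le \log\log x-k+v+C\log(k+2).
\]
This is the usual upper barrier for the running maximum on short intervals.
\item Split the expectation on the dyadic levels $\{\max\in v\text{-window}\}$, i.e. on $G_v\setminus G_{v-1}$.
\item On $G_v$ use Hölder/Jensen: $\mathbb{E}[(\cdot)^q\mathbf 1_{G_v}]\le(\mathbb{E}[\cdot\,\mathbf 1_{G_v}])^q$.
\end{itemize}

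\textbf{Step 2: first moment on $G_v$.} Write
\[
\mathbb{E}\Big[\int_0^1|F_x(h)|^2\mathbf 1_{G_v}\,dh\Big]=\int_0^1\mathbb{E}|F_x(h)|^2\,\widetilde{\mathbb P}_h(G_v)\,dh,
\]
where $\widetilde{\mathbb P}_h$ is the measure tilted by $|F_x(h)|^2/\mathbb{E}|F_x(h)|^2$.
\begin{itemize}
\item Under the tilt the $f(p)$ remain independent, with density $|1-f(p)p^{-1/2-ih}|^{-2}/\mathbb{E}(\cdot)$.
\item Consequently the walk $k\mapsto \log|F_{x_k}(h)|$ acquires drift $1$ per step. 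Drift $1$ per step exactly cancels the slope of the barrier.
\item Relax $G_v$ to its restriction to the single point $h$. The event then becomes "a centred walk with independent, nearly Gaussian increments stays below $v+C\log(k+2)$ for $L$ steps."
\item A ballot estimate for such non-identically distributed walks, proved by a Gaussian (Berry–Esseen/KMT-type) coupling of the prime sums on each block, gives probability $\ll (v+1)/\sqrt{L}$.
\end{itemize}
Hence $\mathbb{E}[\cdot\,\mathbf 1_{G_v}]\ll (v+1)/\sqrt L$.

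\textbf{Step 3: cost of leaving $G_v$.}
\begin{itemize}
\item Bound $\mathbb P(G_{v-1}^c)\ll v\,e^{-2v}$. This comes from a union bound over $k$ and over the $\approx\log x_k$ points at scale $k$, using Gaussian-type large deviation estimates for $\log|F_{x_k}|$. The exponent $2$ is the critical one: the barrier is exactly at the first moment threshold. The $\log(k+2)$ correction in the barrier makes the sum over $k$ converge.
\item On $G_v\setminus G_{v-1}$, combine the two bounds with Hölder with exponents $1/q$ and $1/(1-q)$:
\[
\mathbb{E}[(\cdot)^q\mathbf 1_{G_v\setminus G_{v-1}}]\le \Big(\frac{v+1}{\sqrt L}\Big)^q\big(ve^{-2v}\big)^{1-q}.
\]
\item Sum over $v\ge 0$. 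The result is $\ll L^{-q/2}\,(1-q)^{-q}$, with the factor $(1-q)^{-q}$ coming from $\sum_v (v+1)e^{-2(1-q)v}$.
\item Since $1-q\ge C/\sqrt{L}$, this equals $\ll(\sqrt L(1-q)+1)^{-q}$ as claimed, up to bounded factors.
\end{itemize}

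\textbf{Main obstacle.} The hardest step is the ballot estimate under the tilted measure in Step 2, including the passage from the continuum in $h$ to the discretized barrier. Two things must be controlled:
\begin{itemize}
\item the non-Gaussian, non-identically distributed prime increments, which must be uniform in $h$;
\item the small primes, and the first few steps where the walk is not yet well approximated by a Gaussian.
\end{itemize}
The first approach to try is to compute exponential moments of the block sums $\sum_{x_{k+1}<p\le x_k}$ under the tilt explicitly. Then compare with a Gaussian walk step by step, absorbing errors into the $C\log(k+2)$ slack. The small primes are handled by putting them into $v$.
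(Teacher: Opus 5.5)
Your skeleton matches the paper's. It has a good event cut out by a slope-one upper barrier with a logarithmic correction, a first moment $\ll (v+1)/\sqrt{L}$ on that event, an exit probability $\ll e^{-2v}$ from a union bound over scales and short intervals, and H\"older across levels. The real difference is Step~2. You tilt by $|F_x(h)|^2$ and use a free-endpoint ballot bound for the drifted walk. The paper avoids any change of measure there. It writes the restricted integral through level sets as $e^{-t}\int 2e^{2V}\,\mathbb{P}(S_t>V,\,0\in G_A)\,\mathrm{d}V$, and bounds this barrier probability under $\mathbb{P}$ by discretising increments, Berry--Esseen and a ballot theorem with endpoint (Lemma~\ref{lem:largedeviations}). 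Your tilt is workable in principle: independence survives, exactly as for the paper's $\mathbb{Q}_v$, and a lower barrier such as $L_A(j)=A-20j$ keeps the blockwise Berry--Esseen errors multiplicative. The genuine problems are elsewhere.

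First, and decisively for uniformity in $q$, the final summation is false. With unit shells $G_v\setminus G_{v-1}$ you get
\[
\sum_{v\ge 1}\Big(\frac{v+1}{\sqrt L}\Big)^q\big(ve^{-2v}\big)^{1-q}\asymp L^{-q/2}\sum_{v\ge1}(v+1)e^{-2(1-q)v}\asymp \frac{L^{-q/2}}{(1-q)^{2}},
\]
not $L^{-q/2}(1-q)^{-q}$. Even with the sharper exit bound $e^{-2v}$ you still get $L^{-q/2}(1-q)^{-1-q}$. At $1-q\asymp L^{-1/2}$ this is of size $\sqrt{L}$, worse than the trivial bound, so you only obtain $\ll_q L^{-q/2}$ for each fixed $q$. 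The loss comes from the roughly $1/(1-q)$ shells below the decay scale of $\mathbb{P}(G_{v-1}^c)^{1-q}=e^{-2(1-q)v}$; a positive proportion of them contribute $\asymp((1-q)\sqrt{L})^{-q}$ each. The missing idea is the Soundararajan--Zaman spacing used in the paper. Run your shell argument along the levels $n_j=j/(1-q)$, $j_0\le j\le K$ with $K\asymp(1-q)\sqrt{L}$, so that $n_K$ stays in the range of the first-moment bound. Then $\mathbb{P}(G_{n_j}^c)^{1-q}\ll n_j e^{-2j}$, so the $j$-th term is $\ll j\,e^{-2j}\big((j+1)/((1-q)\sqrt L)\big)^q$, and these sum to $\ll((1-q)\sqrt{L})^{-q}$. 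The leftover term $\mathbf{1}(G_{n_{K+1}}^c)\mathcal{Z}$ is $\ll e^{-2(K+1)}$ by the trivial bound $\mathbb{E}\mathcal{Z}\ll 1$. Genuinely dyadic levels would also work; unit shells do not.

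Second, your barrier is oriented the wrong way for Step~2. Since $x_k=x^{e^{-k}}$, the correction $C\log(k+2)$ is $O(1)$ at the full product but about $C\log L$ at the small primes. Under the tilt, $\log|F_{x_k}(h)|-(L-k)$ is a centred walk that is near $0$ at the small-primes end $k\approx L$, where your barrier sits at height about $v+C\log L$. The probability of staying below it is therefore $\asymp (v+C\log L)/\sqrt{L}$, not $(v+1)/\sqrt{L}$, which costs a factor $(\log\log\log x)^q$. The correction must be $O(1)$ at the small-primes end, e.g.\ $C\log(1+\min(k,L-k))$ as in the paper's $U_A$, and your union bound in Step~3 still converges with that choice. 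Relatedly, if the barrier is imposed only on a mesh, the single-point relaxation in Step~2 is not literally available, because it constrains the mesh point near $h$ rather than $h$. The paper imposes the barrier for every $h\in[0,1]$ and controls the complement by chaining (Lemma~\ref{lem:chaining}).
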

\noindent Our approach will consist of studying this integral through the measure of level sets of $\log |F_x(s)|$. This will allow us to bypass the need for an approximate Girsanov's theorem as in \cite{HarperRMF}, and reveals that the integral is dominated by those $h$ for which $$\log|F_x(1/2+ih)|=\log \log x-O(\sqrt{\log\log x}).$$ Combining this with the first step of Harper's argument in \cite{HarperRMF} yields a short proof of the upper bound therein (and thus of Helson's conjecture), which is given in full in Section \ref{sec:Helson}.

We then adapt this approach to integrals of $|F_x|^{\gamma}$ for $\gamma>2$, where the analysis becomes more delicate. We also handle averages off the critical line.
 
\begin{theorem}\label{thm:mainbound} Fix $\gamma\in (2,4)$. Then uniformly in $q\in [0,2/\gamma]$, all large $x$, $0\leq k\leq \lfloor \log\log\log x\rfloor{+1}$, $\sigma_k=1/2-k/\log x$, and $3<y\leq x^{e^{-k}}$,
\begin{equation}\label{eq:mainbound}
    \mathbb{E}\bigg\{\bigg(\frac{1}{\log y}\int_{0}^1 \big|F_y(\sigma_k+ih)\big|^\gamma\mathrm{d}h\bigg)^q\bigg\}\ll \frac{(\log y)^{(\gamma-2)q}}{(\log\log y)^{(3\gamma q/4)}(2-\gamma q)+1}.
\end{equation}
\end{theorem}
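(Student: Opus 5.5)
We decompose the integral according to the level sets of $\log|F_y(\sigma_k+ih)|$ and control the Lebesgue measure of each level set through moment and barrier estimates. Write $L=\log\log y$, $\beta=\gamma/2\in(1,2)$, and
\[
I:=\frac{1}{\log y}\int_0^1|F_y(\sigma_k+ih)|^\gamma\,\mathrm{d}h.
\]
Heuristically, $\log|F_y(1/2+ih)|$ is a log-correlated field of variance about $L/2$ over $\asymp\log y$ decorrelated blocks of $h$. For $\beta>1$ the integral is dominated by near-maximal points. The maximum over $[0,1]$ is $L-\tfrac34\log L+O(1)$, which gives
\[
\max_h|F_y|^\gamma\approx (\log y)^{\gamma}L^{-3\gamma/4}.
\]
Each near-maximal point contributes a window of length about $1/\log y$, so the target is $I\lesssim (\log y)^{\gamma-2}L^{-3\gamma/4}\times(\text{random }O(1)\text{ factor})$. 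The $q$-th moment of that factor is finite precisely for $q<2/\gamma$, and blows up like $1/(2-\gamma q)$. This is the structure of \eqref{eq:mainbound}, and the proof makes it rigorous.

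\textbf{Step 1 (multiscale decomposition and barrier).} Split $\log F_y(s)=\sum_{j\le L}X_j(s)$ by prime scales $p\in(y^{e^{-j-1}},y^{e^{-j}}]$, so that the partial sums $S_j(h)$ behave like a random walk. Discretise $h$ at mesh $1/\log y$; the shift $\sigma_k=1/2-k/\log y$ only perturbs variances by $O(k)$ and $k\le\log\log\log y+1$, so this is harmless, and similarly for the restriction $y\le x^{e^{-k}}$. Introduce the barrier event $\mathcal{G}$ that for all $j$ and all $h$ one has
\[
S_j(h)\le j-\tfrac34\log L\cdot\tfrac{j}{L}+B+C\log\big(\min(j,L-j)+1\big).
\]
Compute $\mathbb{P}(\mathcal{G}^c)\ll e^{-cB}$ via a union bound over $j$ and the $\asymp e^{j}$ essentially distinct values of $S_j$ at scale $j$, using Gaussian-type tail bounds for Steinhaus Euler products (mod-Gaussian moment computations such as $\mathbb{E}|F|^{2\lambda}$ with tilting). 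Since $q<1$, we can use $\mathbb{E}I^q\le \mathbb{E}[I^q1_{\mathcal{G}_B}]$ summed dyadically over $B$ with $\mathbb{P}$-weights, and Hölder.

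\textbf{Step 2 (level sets on the good event).} For a level $u=L-\tfrac34\log L+B-v$ with $v\ge0$, bound the expected measure of
\[
\{h: \log|F_y(\sigma_k+ih)|\approx u,\ \text{barrier holds along }h\}.
\]
By a tilted one-point estimate (Girsanov-free: expand $\mathbb{E}|F|^{2\lambda}1_{\text{barrier}}$ using the ballot theorem for the approximating walk), this measure is about
\[
e^{-u^2/L}\,\frac{(B+1)(v+1)}{L^{3/2}}.
\]
Multiplying by $e^{\gamma u}$ and summing over $v$, the exponent $\gamma u-u^2/L$ is increasing up to $u\approx L$ because $\gamma>2$. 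The sum is therefore dominated by $v=O(1)$ and gives
\[
\mathbb{E}[I\,1_{\mathcal{G}_B}]\ll (\log y)^{\gamma-2}L^{-3\gamma/4}e^{(\gamma-2)B}\,\mathrm{poly}(B).
\]

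\textbf{Step 3 (moments with $q$ near $2/\gamma$).} Write $\mathbb{E}I^q\le\sum_B \mathbb{P}(\max\approx B)^{1-q}\,\mathbb{E}[I1_{\mathcal{G}_B}]^q$. The exact maximum tail $\mathbb{P}(\max\ge L-\tfrac34\log L+B)\ll Be^{-2B}$ is needed, not the crude union bound. Combining it with Step 2, the $B$-sum behaves like $\sum_B B\,e^{-(2-\gamma q)B}$. This is finite only because of the $e^{-2B}$ rate, and it yields the factor $(2-\gamma q)^{-O(1)}$. To obtain the precise interpolation $L^{-3\gamma q/4}(2-\gamma q)$, restrict $B\le \tfrac34\log L$. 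Beyond that point, use the trivial bound from the non-barrier moment $\mathbb{E}I\ll(\log y)^{\gamma-2}$ (the $q\to2/\gamma$ regime), and optimise. For $q\le 1/2$ or so, Hölder from $q=1$ suffices.

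\textbf{Main obstacle.} The main obstacle is Step 3 combined with the right tail of the maximum. We need the sharp $Be^{-2B}$ upper tail for the maximum of a non-Gaussian Euler product over $[0,1]$, uniformly in $\sigma_k$. I would attempt this via a second barrier/ballot argument at the first-hitting scale $j$: condition on the first scale where the walk exceeds the barrier by $B$, then apply Step 2 to the remaining scales.
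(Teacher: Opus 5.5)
Your outline is the paper's own. You use the walk $S_t$ (with $t=L=\log\log y$), good points cut out by the barrier $A+j(1-\tfrac34\tfrac{\log t}{t})+\mathcal{C}\log(1+j\land(t-j))$ (exactly $U_A^{\mathrm{max}}$), Fubini over level sets with the ballot bound of Lemma~\ref{lem:largedeviations} giving $\mathbb{E}\,\mathcal{Z}_{\gamma,\sigma_k}(A)\ll Ae^{(\gamma-2)A}e^{(\gamma-2)t}t^{-3\gamma/4}$, and an interpolation in $A$ weighted by $\mathbb{P}(\exists h\notin G^{\mathrm{max}}_{A,\sigma_k})$. Your Step~2 is fine.

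\textbf{The maximum tail is missing.} The estimate carrying the proof, $\mathbb{P}(\exists h\notin G^{\mathrm{max}}_{A,\sigma_k})\ll Ae^{-2A-A^2/t}$ (Proposition~\ref{prop:maximum}), is not proved. Step~1 as stated is false for this barrier: a union bound over the $e^j$ blocks gives $e^{j-U(j)^2/j}\approx t^{3/2}e^{-2A}$ for $j$ near $t$, because the barrier sits $\tfrac34\log t$ below the level where entropy and Gaussian tail balance. Your repair (condition on the first hitting scale, then apply Step~2 to the \emph{remaining} scales) misplaces the ballot argument. The paper fixes the first scale $j\in(t/2,t]$ at which $\max_h S_j$ crosses and pays $e^j$ for blocks of length $e^{-j}$. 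It then uses that the walk at the block's left endpoint stayed under the barrier at all \emph{earlier} scales. This gives the factor $A(U_A^{\mathrm{max}}(j)-u+C)j^{-3/2}e^{-u^2/j}$, which with the $-\tfrac34\tfrac{\log t}{t}$ slope beats $e^j$; the bump $\mathcal{C}\log(1+t-j)$ then makes $\sum_j$ converge. One must also control the supremum over the block jointly with that single-point barrier event. The paper splits into two cases:
\begin{itemize}
\item $S_j(\sigma_k)$ itself crosses: handled by the ballot bound up to $j-1$ and a Chernoff bound on the last increment;
\item the oscillation $\max_{h\in[0,e^{-j})}S_j(\sigma_k+ih)-S_j(\sigma_k)$ is large: handled by chaining, a Chernoff bound with $\lambda_v=100(m+1)^4$, and the tilted measure $\mathbb{Q}_v$.
\end{itemize}
Under $\mathbb{Q}_v$ the $Z_p$ stay independent and the walk gets only an $O(1)$ drift, so Proposition~\ref{cor:Hittingprob} still applies. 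A single mesh $1/\log y$ in $h$ does not replace this.

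\textbf{The closing step fails as written.} Your ``trivial'' bound $\mathbb{E}I\ll(\log y)^{\gamma-2}$ is false. In fact $\mathbb{E}I\asymp(\log y)^{\gamma^2/4-1}$, and $\gamma^2/4-1-(\gamma-2)=(\gamma/2-1)^2>0$. For the same reason H\"older from $q=1$ is useless, since $1>2/\gamma$. What you need is Lemma~\ref{lem:crudesupercritical}, $\mathbb{E}\{\mathcal{Z}_{\gamma,\sigma_k}^{2/\gamma}\}\ll e^{2(\gamma-2)t/\gamma}$, which requires its own interpolation over $\{\max_h S_t\le t+A\}$. Even granting it, the leftover term is only $\ll\mathbb{P}(\max_h S_t>m(t)+B_*)^{1-\gamma q/2}e^{(\gamma-2)qt}$. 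Stopping at $B_*=\tfrac34\log t$ therefore saves about $t^{-\frac32(1-\gamma q/2)}$, which is weaker than $t^{-3\gamma q/4}$ once $q>1/\gamma$. The interpolation must run to $B_*\asymp\log(t^{3\gamma q/4}(2-\gamma q))/(2-\gamma q)$, so the sharp tail is needed over a long range of $A$.

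\textbf{The power of $2-\gamma q$.} Your sum $\sum_B Be^{-(2-\gamma q)B}$ is $\asymp(2-\gamma q)^{-2}$, and ``optimise'' does not reduce it to the single factor in \eqref{eq:mainbound}. The paper aims for one factor by taking $n_j=j/(2-\gamma q)$. There, however, $-2(1-q)n_j+(\gamma-2)qn_{j+1}=-j+(\gamma-2)q/(2-\gamma q)$, so a factor $e^{(\gamma-2)q/(2-\gamma q)}$ appears that is bounded only for $q$ away from $2/\gamma$. Heuristically $\mathcal{Z}_{\gamma,\sigma_k}\gtrsim e^{(\gamma-2)t}t^{-3\gamma/4}e^{\gamma(\max_h S_t-m(t))}$. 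If the factor $A$ in the tail $Ae^{-2A}$ is sharp, as for log-correlated fields, this lower bound already has $q$-th moment of order $(2-\gamma q)^{-2}$ for $t^{-1/2}\lesssim2-\gamma q\ll1$. So do not expect to recover the single power near $q=2/\gamma$ by tuning the truncation.
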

Upon making the necessary identifications, the exponents on the right-hand side match those found in the normalisation of \textit{supercritical} GMC \cite{glassy}, which is also known to only have moments up to $q<2/\gamma$.  By contrast with the critical ($\gamma=2$) case, the dominant contribution to the integral will now come from points surrounding the local maxima of $|F_y(\sigma_k+ih)|$ on $h\in [0,1]$. This leads us to study $\max_{h\in [0,1]}|F_y(\sigma_k+ih)|$ using ideas from the study of extrema of log-correlated fields \cite{BramsonDingZeitouni,Bramson}, and the following bound arises as an immediate corollary of the analysis.

\begin{cor}[Maximum bound]\label{cor:maximum} For $x$ large and $1<y\leq \log\log x/\log\log\log x$,
\[
    \mathbb{P}\bigg(\max_{h\in [0,1]}|F_x(1/2+ih)|>\frac{\log x}{(\log\log x)^{3/4}}e^y\bigg)\ll y\exp{\Big(\!-2y-\frac{y^2}{\log\log x}\Big)}.
\]
\end{cor}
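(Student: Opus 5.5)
The plan is to run the usual first-moment argument for extrema of log-correlated fields (Bramson, Ding–Zeitouni), with a barrier imposed on the random walk $k\mapsto \log|F_{x^{e^{-k}}}|$. Write $n=\log\log x$ and, for $0\le j\le n$, $S_j(h):=\log|F_{\exp(e^j)}(1/2+ih)|$. Each increment $S_{j+1}-S_j$ is a sum over primes in $(\exp(e^j),\exp(e^{j+1})]$. It has mean zero and variance about $1/2$, and increments are independent in $j$. For $|h-h'|\le e^{-j}$ the increments at $h$ and $h'$ up to level $j$ are nearly identical, while beyond that level they decorrelate.

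The first step is to discretise. We cover $[0,1]$ by about $e^{n}=\log x$ intervals of length $1/\log x$. On each interval $I$ we control $\max_{h\in I}|F_x(1/2+ih)|$ by $|F_x|$ at the centre, up to an $O(1)$ factor with good tails. For this we use a Sobolev/Bernstein-type bound, namely that $F_x$ has essentially frequencies up to $\log x$, combined with the moment computations the paper develops for Theorem~\ref{thm:mainbound}.

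The second step is the barrier. Set $u=n-\tfrac34\log n+y$, so the target level is $\log|F_x|>u$. Let $L(j)=\frac{j}{n}u+ B\min(j,n-j)^{1/2+\epsilon}$ (more simply, use the line plus $y$). A union bound over dyadic scales $j$ and about $e^j$ intervals at scale $j$ shows that with probability $\ll ye^{-2y}$ the path never crosses $L$ at any intermediate scale. The cost at scale $j$ is $e^{j}\,\mathbb{P}(S_j>L(j))$. This requires Gaussian-type tail estimates for $S_j$, which follow from the moment generating function of Euler products, i.e.\ $\mathbb{E}|F|^{2\lambda}$ computed through the independence of $f(p)$.

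The third step is the first moment on the good event. We bound
\[
\mathbb{P}(\max>\ldots)\le \mathbb{P}(\text{barrier crossed})+\log x\cdot \mathbb{P}\big(S_n(0)>u,\ S_j(0)\le L(j)\ \forall j\big).
\]
We compute the last probability by tilting the walk, weighting by $e^{2S_n}/\mathbb{E}e^{2S_n}$. Under the tilt, the increments acquire drift $1$ and remain independent, so no approximate Girsanov argument is needed: this is an exact exponential change of measure prime-block by prime-block. The tilt contributes $e^{-2u}\mathbb{E}e^{2S_n}\approx (\log x)^{-1}n^{3/2}e^{-2y}$, up to Gaussian corrections that produce the $e^{-y^2/n}$ factor, since the drift needed is $u/n$ rather than $1$. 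The tilted walk must stay below a line ending at height $y$. By the ballot theorem, proved in the paper's form via the reflection principle or Kolmogorov-type estimates for sums of independent increments with controlled third moments, this has probability $\ll y\cdot 1/n^{3/2}$. Multiplying the three factors gives $y e^{-2y-y^2/n}$, as claimed.

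The main obstacle is the ballot estimate for the tilted non-Gaussian walk, together with uniformity in $y$ up to $n/\log n$. The increments are only approximately Gaussian at small $j$ (small primes). We handle this by conditioning on the first $O(1)$ levels and using Berry–Esseen-type coupling for the later ones. We expect these estimates to already appear as the key lemmas behind Theorem~\ref{thm:mainbound}, so the corollary should follow by reading off the level-set tail bound proved there.
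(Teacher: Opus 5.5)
Your second step is where the argument breaks, and it is not a technicality: it is where essentially all the difficulty of the corollary lives.

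For the ballot estimate in your third step to produce the factor $y/n$, the barrier must be tight. (Together with the local factor $n^{-1/2}$, this $y/n$ is what cancels the $n^{3/2}$ coming from the tilt.) Concretely, the barrier has to lie at most a sub-diffusive (in practice logarithmic) distance above the line from $y$ to $u=n-\frac34\log n+y$, and it has to end at $u$; ending at $u+y$ already costs an extra factor $y$.

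Your bump $B\min(j,n-j)^{1/2+\epsilon}$ fails both steps. It is invisible on diffusive scales, so the ballot factor is $\asymp 1$ and you only recover $n e^{-2y}$, i.e.\ the naive first-moment bound. It also lacks the shift by $y$ near $j=0$, so the crossing cost at bounded $j$ depends on $B$ but does not decay in $y$.

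A barrier that is tight enough fails step two instead. Take, say, $L(j)=y+j(1-\frac34\frac{\log n}{n})+\mathcal{C}\log(1+j\wedge(n-j))$. The union-bound cost at scale $j$ is
\[
e^{j}\,\mathbb{P}\big(S_j(0)>L(j)\big)\approx \frac{1}{\sqrt{j}}\,e^{-2\delta_j-\delta_j^2/j},\qquad \delta_j:=L(j)-j=y-\frac{3}{4}\frac{j\log n}{n}+\mathcal{C}\log\big(1+j\wedge(n-j)\big).
\]
For $j$ near $n$ this is $\asymp n\,e^{-2y-y^2/n}$ (using $y\le n/\log n$), which is too large by a factor $n/y$. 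This cannot be repaired within a union bound: at $j=n$ the crossing event is exactly $\{\max_h S_n(h)>u\}$, so you would be assuming the conclusion.

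What is missing is the first-crossing argument behind Proposition~\ref{prop:maximum}; the corollary follows immediately from that proposition via \eqref{eq:trimming}.
\begin{itemize}
\item The plain union bound is only used for $j\le n/2$.
\item For $n/2<j\le n$ one bounds $e^{j}\,\mathbb{P}\big(S_\ell(0)\le U(\ell)\ \forall \ell\le j-1,\ \max_{h\in[0,e^{-j})}S_j(h)>U(j)\big)$. Here the ballot estimate for the walk up to time $j-1$ (Lemma~\ref{lem:largedeviations}, via Proposition~\ref{cor:Hittingprob}) supplies the extra factor $y\,j^{-3/2}$ that kills the $n^{3/2}$. The logarithmic bump with a large constant $\mathcal{C}$ then makes the sum over $j$ converge.
\item A crossing at scale $j$ splits into two cases. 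In the first, ``the grid value $S_j(0)$ crosses''; this is handled by the ballot bound at time $j-1$ plus a Chernoff bound on the last increment. In the second, ``the oscillation $\max_h S_j(h)-S_j(0)$ on the $e^{-j}$-interval exceeds $U(j)-S_j(0)$''.
\item The oscillation case needs a \emph{joint} bound with the barrier event. The paper obtains it by chaining and then tilting by $e^{\lambda(S_j(v)-S_j(v_*))}$; under this tilt the $f(p)$ stay independent and the walk acquires only an $O(1)$ drift.
\end{itemize}

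Your first step has the same defect. An unconditional tail bound for $\max_I|F_x|/|F_x(\text{centre})|$ is useless once summed over $\log x$ intervals. What is needed is the oscillation tail on the rare event $\{S_n(0)\approx u\}\cap\{\text{barrier}\}$.
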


\noindent This can be seen as a random analogue of the Fyodorov-Hiary-Keating conjecture \cite{FyodorovHiaryKeating,FyodorovKeating}, matching the best known bound in that setting \cite{FHK1}, and improving the one for $\max_{h\in [0,1]}|F_x(1/2+ih)|$ in \cite{ABH} to $O(1)$ precision. By understanding the structure of these maxima, we can also bound the typical measure of level sets of $\log |F_x|$ near the height of the maxima.
\begin{cor}[Typical measure of level sets]\label{cor:level} Let $x$ be large and {$1<A\leq (\log x)^{1/\log\log\log x}$}. Then uniformly in $|y|<(\log\log x)/2$,
\[
\mathrm{meas}\Big\{h\in [0,1]: |F_x(1/2+ih)|>\frac{\log x}{(\log\log x)^{3/4}}e^y\Big\}\leq (\log x)^{-1}A|\log A-y|\exp{\Big(\!-2y-\frac{y^2}{\log\log x}\Big)}
\]
with probability $1-O((\log A)/A)$.
\end{cor}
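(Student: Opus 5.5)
The plan is a first-moment bound combined with Markov's inequality. The one complication is that the naive expectation of the level-set measure is too large by the factor coming from the maximum, so we first intersect with a high-probability "barrier" event and compute the first moment there.

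Write $L=\log\log x$ and $m=\log x/L^{3/4}$, and let $u=\log m+y$ be the target level for $\log|F_x(1/2+ih)|$. We use the multiscale decomposition $\log|F_x(1/2+ih)|=\sum_{j\le L}S_j(h)$. Here $S_j$ collects the primes in $(e^{e^{j-1}},e^{e^j}]$. The summands are approximately independent Gaussians of variance $1/2$, and the partial sums $W_j(h)=\sum_{i\le j}S_i(h)$ behave like a random walk, or a branching random walk once we discretise $h$ at scale $e^{-j}$.

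\textbf{Step 1 (barrier event).} Let $G_A$ be the event that for all $j\le L$ and all $h\in[0,1]$,
\[
W_j(h)\le j+\log A+\text{(a slowly growing correction, e.g. } \min(j,L-j)^{1/10}\text{)}.
\]
The upper-barrier estimate underlying Corollary \ref{cor:maximum}, namely the ballot/union bound over the $e^j$ intervals at level $j$, gives $\mathbb{P}(G_A^c)\ll(\log A)/A$. This is in the same spirit as the bound $\mathbb{P}(\max_h W_j-j>\log A)\ll(\log A)e^{-2\log A}\cdot(\ldots)$, and the worse rate $(\log A)/A$ is what remains after summing over scales. This is also where the restriction $A\le(\log x)^{1/\log\log\log x}$ enters: it keeps the barrier shift negligible compared with $L$.

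\textbf{Step 2 (first moment on the barrier).} By Fubini,
\[
\mathbb{E}\big[\mathrm{meas}\{h: \log|F_x|>u\}\mathbf 1_{G_A}\big]\le\int_0^1\mathbb{P}\big(W_L(h)>u,\ W_j(h)\le j+\log A+\dots\ \forall j\big)\,dh.
\]
Since the law is stationary in $h$, this is a single random-walk probability. We use a Gaussian approximation with the tilt $e^{2W}$, via the same Euler product tilting/moment computations the paper uses for Theorem \ref{thm:mainbound}. The ballot theorem then gives, for a walk of variance $L/2$ ending near $u=L-\tfrac34\log L+y$ while staying below a line with offset $\log A$,
\[
\ll \frac{e^{-u^2/L}}{\sqrt L}\cdot\frac{(\log A+1)(\log A-y+1)}{L}.
\]
Expanding $u^2/L=L-\tfrac32\log L+2y+y^2/L+o(1)$ gives $e^{-u^2/L}\approx(\log x)^{-1}L^{3/2}e^{-2y-y^2/L}$. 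After the ballot factor $L^{-3/2}$ this leaves
\[
(\log x)^{-1}\log A\,|\log A-y|\,e^{-2y-y^2/L}.
\]
The range $|y|<L/2$ keeps the endpoint inside the regime where the Gaussian and ballot asymptotics hold uniformly. One should also integrate over the endpoint above $u$, which only produces the same order.

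\textbf{Step 3 (Markov).} Markov's inequality applied at threshold $A/\log A$ times the Step 2 bound shows that, off an event of probability $O((\log A)/A)$, the measure is at most $(\log x)^{-1}A|\log A-y|e^{-2y-y^2/L}$. Together with $\mathbb{P}(G_A^c)$ this gives the claim.

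\textbf{Main obstacle.} The hard part is Step 2: rigorously justifying the Gaussian ballot estimate for the non-Gaussian increments $S_j$, uniformly in $y$ and with the barrier. The near-independence only holds on well-separated scales, and the first few and last few scales need separate treatment. I would handle this by reusing the two-point and tilted-measure estimates established in the proof of Theorem \ref{thm:mainbound} and Corollary \ref{cor:maximum}. If the direct barrier probability is awkward, the fallback is to discretise $h$ to spacing $1/\log x$ and control the within-interval oscillation of $\log|F_x|$ by a derivative bound.
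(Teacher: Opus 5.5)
Your three-step structure is exactly the paper's: a barrier event, then a first moment on the barrier via Fubini and a ballot estimate, then Markov. However, the barrier you write down in Step~1 does not give the bound you claim in Step~2.

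\textbf{The gap.} Your $G_A$ only imposes $W_j\le j+\log A+\min(j,L-j)^{1/10}$. At $j=L$ this reads $W_L\le L+\log A$, which is $\tfrac34\log L$ above the typical maximum. For a walk ending at $u=L-\tfrac34\log L+y$, the terminal gap to this barrier is $\log A-y+\tfrac34\log L$. So the ballot estimate gives
\[
\frac{(\log A+1)\big(\log A-y+\tfrac34\log L+1\big)}{L}\cdot\frac{e^{-u^2/L}}{\sqrt L},
\]
not the factor $(\log A+1)(\log A-y+1)/L$ that you wrote. For $y<\log A$, Markov then gives a failure probability of order $\log A\,(|\log A-y|+\log L)/(A|\log A-y|)$. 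This exceeds $(\log A)/A$ by a factor $\asymp\log L$ whenever $|y-\log A|=O(1)$. Moreover, for $\log A<y<\log A+\tfrac34\log L$ your constrained event is not even empty. The loss is genuine, not an artefact: the ballot estimate is sharp, and on your $G_A$ the maximum can still overshoot its typical height by $\asymp\log L$. Those configurations inflate the first moment.

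\textbf{The repair.} Build the $-\tfrac34\log L$ correction into the barrier by using the paper's $G^{\max}_{A',1/2}$ with $A'=\log A+C$. Here $C$ absorbs the $O(1)$ difference between $\log|F_x|$ and the walk $S_t$, cf.\ \eqref{eq:trimming}, so the barrier ends at $L-\tfrac34\log L+\log A+C$. Then Lemma~\ref{lem:largedeviations} gives exactly your Step~2 bound, the event is empty for $y>\log A$, and Step~3 is the paper's Markov step.

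The price is that Step~1 can no longer be done by a union bound over the $e^j$ intervals with one-point tails: near $j\approx L$ this produces terms of size $e^{-2A'}L^{3/2}$. You need the extra factor $L^{-3/2}$ from staying under the barrier at earlier scales. That is the content of Proposition~\ref{prop:maximum} (the dichotomy, chaining inside $e^{-j}$-intervals, the tilted measures $\mathbb{Q}_v$, and the ballot theorem), which gives $\mathbb{P}(\exists h\notin G^{\max}_{A',1/2})\ll A'e^{-2A'}\ll(\log A)/A^2$.

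This is also where $A\le(\log x)^{1/\log\log\log x}$ is really used. It puts $A'$ (essentially) in the range $A'\le L/\log L$ of that proposition. It also keeps the cross term $\tfrac32 y\log L/L$ in $u^2/L$ bounded for $y\le\log A$; your ``$+o(1)$'' is not valid uniformly in $|y|<L/2$. Finally, the rate $(\log A)/A$ comes from the Markov step, not from $\mathbb{P}(G_A^c)$ as your Step~1 suggests.
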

\noindent Theorem \ref{thm:mainbound} and both of these corollaries display what is typical of Gaussian log-correlated processes (see, e.g.,\ Lemma 4.2 in \cite{CHL}), and are expected to be sharp. In particular, by taking $A$ sufficiently large and $y=O(1)$ in Corollary \ref{cor:level}, we find that the measure of points $h\in [0,1]$ for which $\log|F_x(1/2+ih)|=\log\log x-\frac{3}{4}\log\log\log x+O(1)$ is  $\ll_A(\log x)^{-1}$ with high probability. This suggests that their contribution to the left-hand side in \eqref{eq:mainbound} should match the upper bound.



\bigskip
Lastly, the $(\log\log x)^{3\gamma q/4}$ saving in Theorem \ref{thm:mainbound} leads to improved bounds for the \textit{pseudomoments} 
\[
\Psi_{2q,\alpha}(x):= \lim_{T\to\infty} \frac{1}{T}\int_{T}^{2T} \bigg|\sum_{n\leq x}\frac{d_\alpha(n)}{n^{1/2+it}}\bigg|^{2q}\mathrm{d}t
\]
of the Riemann zeta function, which were first introduced by Conrey and Gamburd \cite{ConreyGamburd}. Motivated by the classical problem of computing {moments} of the zeta function, they showed that $\Psi_{2q,1}(x)\sim a_q \gamma_q(\log x)^{q^2}$ when $q\in \mathbb{N}$ and $\alpha=1$, where $a_q$ is the ``arithmetic" constant in the Keating-Snaith conjecture \cite{KS} and $\gamma_q$ is the volume of a certain convex polytope. The order of magnitude $(\log x)^{q^2}$ was shown to persist to non-integer $q>0$ in \cite{BondarenkoHeapSeip, GerspachLow}.

A more nuanced picture has emerged when $\alpha> 1$: while $\Psi_{2q,\alpha}\asymp (\log x)^{(q\alpha)^2}$ for $q>1/2$ \cite{BondarenkoHeapSeip}, the order of magnitude for small $q>0$ was determined up to $(\log\log x)$ factors by Gerspach \cite{GerspachLow} to be 
\begin{align}\label{conj5.4}
    \Psi_{2q,\alpha}(x)\asymp \left\{
		\begin{array}{lll}
			(\log x)^{2(\alpha-1)q}(\log\log  x)^{O(1)} & \mbox{if } 1\leq \alpha <2 \mbox{ and }0<q\leq 2(\alpha-1)/\alpha^2 \\
			(\log x)^{(q\alpha)^2} & \mbox{if }  1\leq \alpha <2\mbox{ and }  2(\alpha-1)/\alpha^2<q\leq 1/2\\
			(\log x)^{q\alpha^2/2}(\log\log x)^{O(1)} & \mbox{if }\alpha\geq 2\mbox{ and } 0<q<1/2,
		\end{array}
	\right.
\end{align}
following initial progress in \cite{BSSZ}. In his thesis, he later conjectured the correct exponent of $(\log\log x)$ in the above, using heuristics based on work of Arguin-Ouimet-Radziwiłł \cite{AOR} and the Fyodorov-Hiary-Keating conjectures (see Conjecture 5.4 in \cite{Gerspach}). Our last result establishes the upper bound in this conjecture, in the first regime in \eqref{conj5.4}.
\begin{theorem}\label{thm:pseudomoments} Let $\alpha\in (1,2)$, $0<q<2(\alpha-1)/\alpha^2$ be fixed. Then uniformly for large $x$,
\[
    \Psi_{2q,\alpha}(x)\ll \frac{(\log x)^{2q(\alpha-1)}}{(\log\log x)^{q(3\alpha/2)}}.
\]
\end{theorem}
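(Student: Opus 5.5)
The plan is to reduce the pseudomoment to a moment of a random multiplicative sum, and then feed that into Theorem \ref{thm:main}. By the Bohr correspondence (Kronecker/Weyl equidistribution of $(p^{-it})_{p\le x}$ on the torus), for fixed $x$ the vector $(p^{-it})_{p\leq x}$ with $t$ uniform in $[T,2T]$ converges in distribution, as $T\to\infty$, to $(f(p))_{p\leq x}$ with $f$ Steinhaus. The sum $\sum_{n\le x} d_\alpha(n)n^{-1/2-it}$ is a polynomial in these finitely many variables, hence a bounded continuous function on the torus. Therefore
\[
\Psi_{2q,\alpha}(x)=\mathbb{E}\bigg|\sum_{n\le x}\frac{d_\alpha(n)f(n)}{\sqrt n}\bigg|^{2q}.
\]
So the task becomes bounding the $2q$-th moment of the weighted sum $\sum_{n\le x} d_\alpha(n)f(n)n^{-1/2}$, in the small range $q<2(\alpha-1)/\alpha^2<1/\alpha$.

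The weights $n^{-1/2}$ prevent a direct appeal to Theorem \ref{thm:main}, so I will handle the weighted sum as follows. First, partial summation gives
\[
\sum_{n\le x}\frac{d_\alpha(n)f(n)}{\sqrt n}=\frac{S(x)}{\sqrt x}+\frac12\int_1^x \frac{S(u)}{u^{3/2}}\,du,
\]
where $S(u)=\sum_{n\le u}d_\alpha(n)f(n)$. Since $2q<1$, the inequality $|a+b|^{2q}\le|a|^{2q}+|b|^{2q}$ is available, but moving $\mathbb{E}|\cdot|^{2q}$ inside the integral loses a factor $\log x$. I would therefore instead dyadically decompose $u\in[e^{j},e^{j+1}]$ and subadditively sum the contributions. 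Each block is $\int S(u)u^{-3/2}du$ over a short range. Concavity (Jensen applied to normalized measure) gives $\mathbb{E}|\int_{I_j}|^{2q}\ll \sup_{u\in I_j}\mathbb{E}|S(u)/\sqrt u|^{2q}$. Theorem \ref{thm:main} then bounds this by $(j)^{2q(\alpha-1)}(\log j)^{-3\alpha q/2}$, and summing over $j\le\log x$ gives $(\log x)^{2q(\alpha-1)+1}$. That loses far too much, so a naive partial summation is not enough.

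The better route is to redo the argument behind Theorem \ref{thm:main} with the weights built in. Conditioning on the largest prime factor $P(n)$ and applying Parseval to the Dirichlet series, the analysis leads to the integral
\[
\int |F_x(\sigma+ih)|^{2\alpha}\,\frac{dh}{|\sigma-1/2+ih|^2}.
\]
The weight $n^{-1/2}$ shifts $s$ to $s+1/2$, giving a Euler product on the line $1/2+1/\log x$ (the $k=1$ case in Theorem \ref{thm:mainbound}) integrated against a Lorentzian kernel $\frac{dh}{1/\log^2x+h^2}$, rather than the structure $\frac1x\int|\cdot|^2 dh/|\tfrac12+ih|^2$. Concretely, I would follow Gerspach's approach: write $\sum_{n\le x}d_\alpha(n)f(n)n^{-1/2}$ as a sum over the $x$-smooth part, and use the smoothed Perron formula
\[
\frac{1}{2\pi i}\int F_x(1/2+1/\log x+it)^{\alpha}\,x^{it+1/\log x}\,\frac{dt}{t-i/\log x}.
\]
Bounding the error from the sharp cutoff by the same fractional-moment tools gives
\[
\Psi_{2q,\alpha}\ll\mathbb{E}\Big(\int_{\mathbb R}|F_x(\sigma_1+it)|^{2\alpha}\frac{dt}{|t|^2+\log^{-2}x}\Big)^{q}.
\]
The remaining steps are standard. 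Split the $t$ integral into unit intervals $[N,N+1]$. By stationarity in $t$ (translation invariance of the Steinhaus law) each interval has the law of $\int_0^1$. The region $|t|\le 1/\log x$ carries weight $\log^2 x$, and the region $|t|\in[2^{-m-1},2^{-m}]$ carries weight $4^m$. Using subadditivity of $t\mapsto t^q$ and Theorem \ref{thm:mainbound} with $\gamma=2\alpha$ and $y=x^{1/e}$, the region $|t|\ge1$ contributes $\sum_N N^{-2q}\cdot(\log x)^{q}\cdot\text{RHS}$. This converges only if $q>1/2$, which fails here. The fix is to use Hölder across $N$ (or the bound $\sum_N N^{-2}\int_N$ taken inside before the $q$-th power, with Jensen on the probability weights $N^{-2}$), which gives a convergent factor.

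The main obstacle is the small-$t$ region, where the kernel is of size $\log^2x$ on a window of length $1/\log x$. There $|F_x|$ at nearby points is essentially constant, the total weight is $\log x\cdot|F_x(\sigma_1)|^{2\alpha}$, and the moment becomes $(\log x)^{q}\mathbb{E}|F_x|^{2\alpha q}\asymp(\log x)^{q+\alpha^2q^2}$, the second regime's exponent. Precisely because $q<2(\alpha-1)/\alpha^2$, this is dominated by the maximum-driven term $(\log x)^{q}\cdot(\log x)^{(2\alpha-2)q-q}(\log\log x)^{-3\alpha q/2}$ coming from Theorem \ref{thm:mainbound}, which is where the condition on $q$ enters. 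I expect carefully verifying this comparison, including the $\log\log$ factors in the crossover, to be the delicate step.
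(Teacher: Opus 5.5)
There is a genuine gap. You set up the dyadic decomposition $|t|\in[2^{-m-1},2^{-m}]$, but you only treat its two ends: the window $|t|\le 1/\log x$ and the range $|t|\ge 1$. The mesoscopic ranges $1/\log x\ll|t|\ll 1$ are never bounded, and nothing you cite handles them, because Theorem \ref{thm:mainbound} only controls integrals over unit-length intervals. The naive options both fail. Bounding $\int_{2^{-m-1}}^{2^{-m}}\le\int_0^1$ costs $\sum_m 4^{mq}\asymp(\log x)^{2q}$. Applying Jensen on each piece gives only $(\log x)^{q\alpha^2}$ overall, which is too large since $\alpha^2>2(\alpha-1)$.

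The missing ingredient is a mesoscopic version of Theorem \ref{thm:mainbound}, which the paper obtains by a tilted-measure decoupling, as follows.
\begin{itemize}
\item Work on $[T_{j-1},T_j]$ with $T_j\asymp e^{j}/\log x$. The primes $p\le x_j:=e^{1/T_j}$ contribute an essentially constant factor there.
\item Pull this factor out of the $q$-th power by tilting with density $|F_{x_j}(\sigma)|^{2\alpha q}/\mathbb{E}|F_{x_j}(\sigma)|^{2\alpha q}$, then condition on the larger primes and apply conditional Jensen.
\item Control the fluctuation $|F_{x_j}(\sigma+ih)|/|F_{x_j}(\sigma)|$ under the tilt by \eqref{eq:ERRORBOUND}, and use $\mathbb{E}|F_{x_j}(\sigma)|^{2\alpha q}\ll T_j^{-(\alpha q)^2}$.
\item The remaining product over $x_j<p\le x$, rescaled by $h\mapsto h/T_j$, is a field on $[0,1]$ with about $j$ scales, and the proof of Theorem \ref{thm:mainbound} applies to it.
\end{itemize}
Scale $j$ then contributes
\[
\ll(\log x)^{(\alpha q)^2}\,e^{j(2q(\alpha-1)-(\alpha q)^2)}\,j^{-3\alpha q/2}.
\]
The hypothesis $q<2(\alpha-1)/\alpha^2$ is exactly what makes this sum over $j\le\log\log x$ dominated by its top term, which yields $(\log x)^{2q(\alpha-1)}(\log\log x)^{-3\alpha q/2}$. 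Your comparison of the innermost window ($j=0$) with the unit scale ($j\approx\log\log x$) only checks the two endpoints of this series. It does not replace a bound at every intermediate scale.

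There are also some smaller issues.
\begin{itemize}
\item Your reduction drops a factor $(\log x)^{-q}$: compare $\mathbb{E}\int_{\mathbb{R}}|F_x|^{2\alpha}\frac{dt}{t^2+\log^{-2}x}\asymp(\log x)^{\alpha^2+1}$ with $\sum_{n\le x}d_\alpha(n)^2/n\asymp(\log x)^{\alpha^2}$. This is why your exponent bookkeeping is off: the innermost window should give $(\log x)^{(\alpha q)^2}$, not $(\log x)^{q+(\alpha q)^2}$.
\item For $|t|\ge 1$, Jensen with weights proportional to $N^{-2}$ goes the wrong way for the concave map $x\mapsto x^q$. The workable fix is H\"older in the probability space, $\mathbb{E}Y^q\le(\mathbb{E}Y^{q^*})^{q/q^*}$ with $q^*\in(1/2,1/\alpha)$, followed by subadditivity at $q^*$ and Theorem \ref{thm:mainbound}. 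This costs nothing, because for fixed $q^*<1/\alpha$ that bound has the form $B^{q^*}$ with $B=(\log x)^{2(\alpha-1)}(\log\log x)^{-3\alpha/2}$.
\item The sharp-cutoff reduction is only sketched. The paper imports it from Gerspach's Proposition 6.6, where it involves a sum over $k\le\log\log\log x$ of Euler products truncated at $x^{e^{-(k+1)}}$ on shifted lines, not a single product on one line.
\end{itemize}
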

\noindent The proof combines Gerspach's original argument \cite{Gerspach} with the bound from Theorem \ref{thm:mainbound}. It also requires us to extend the conclusion of Theorem \ref{thm:mainbound} to integrals over \textit{mesoscopic} intervals, meaning intervals of length $\asymp(\log y)^\theta$ for $\theta\in (-1,0]$. We achieve this using a tilted measure argument.

\subsection*{Organisation} Section \ref{sec:Helson} proves  Helson's conjecture, by first reducing the claim to that of Proposition \ref{prop:2bound} (in Section \ref{sec:passing}), then proving said proposition in Sections \ref{sec:restrictedeuler} and \ref{sec:largedeviations}. In Section \ref{sec:supercritical}, we show how to adapt our approach to prove Theorem \ref{thm:mainbound}; this relies on a result on the structure of the maxima of $|F_y(\sigma+ih)|$ on $[0,1]$, proved later in Section \ref{sec:maximum}, where we also establish Corollaries \ref{cor:maximum} and \ref{cor:level}. Finally, Section \ref{sec:pseudomoments} proves Theorem \ref{thm:pseudomoments}, and the appendix compiles various Gaussian approximation estimates used throughout the paper.
 
\subsection*{Notation} We use standard asymptotic notation, writing $f(T) = O(g(T))$ or $f(T) \ll g(T)$ to mean that $\limsup_{T\to\infty} |f (T )/g(T )|$ is bounded, and $f(T) = o(g(T))$ to mean that $|f(T)/g(T)|\to 0$ as $T\to\infty$. A subscripted parameter next to $o, O$ or $\ll$ indicates that the implicit constant may depend on that parameter.
\subsection*{Acknowledgements and funding}
I thank Louis-Pierre Arguin, Seth Hardy and Mo Dick Wong for their encouragement and comments, Adam Harper for feedback on a preliminary version of the work, and Nathan Creighton for his careful reading of the current version. I also thank Maxim Gerspach for helpful conversations about pseudomoments, and Christopher Atherfold for taking interest in the work. This work is supported by the EPSRC Centre for Doctoral Training in Mathematics of Random Systems: Analysis, Modelling and Simulation (EP/S023925/1). 
\section{A short proof of Helson's conjecture}\label{sec:Helson}
This section proves the upper bound in \eqref{eq:Harper}. As observed by Harper \cite{HarperRMF}, it suffices to do so for $q\geq 2/3$, since
\begin{equation}\label{eq:holderlift}
    \mathbb{E}\bigg\{\Big|\frac{1}{\sqrt{x}}\sum_{n\leq x} f(n)\Big|^{2q}\bigg\} \leq \mathbb{E}\bigg\{\Big|\frac{1}{\sqrt{x}}\sum_{n\leq x}f(n)\Big|^{4/3}\bigg\}^{3q/2} \ll ({\log\log x})^{-q/2}
\end{equation}
by Hölder's inequality and the claim for $q=4/3$. We begin by showing that for $q\in [2/3,1]$,
\begin{equation}\label{eq:passingcritical}
    \mathbb{E}\bigg\{\Big|\frac{1}{\sqrt{x}}\sum_{n\leq x}f(n)\bigg|^{2q}\Big\} \ll \mathbb{E}\bigg\{\bigg(\frac{1}{\log y}\int_0^1 \big|F_y(1/2+ih)\big|^2\mathrm{d}h\bigg)^q\bigg\}+o\big((\log\log y)^{-q/2}),
\end{equation}
where $\log y=\log x/(\log\log x)^2$. This is the content of Section \ref{sec:passing}, which we emphasise is not new and is only included to make our proof of \eqref{eq:Harper} self-contained. The main difficulty then lies in bounding the right-hand side in \eqref{eq:passingcritical} (cf.\ Proposition \ref{prop:2bound}), which is achieved in Sections \ref{sec:restrictedeuler} and \ref{sec:largedeviations}.

\subsection{Reduction to moments of integrals of Euler products}\label{sec:passing} To prove \eqref{eq:passingcritical}, we follow the presentation of Gorodetsky and Wong \cite{GWHelson}, which streamlines Harper’s argument from \cite{HarperRMF} by incorporating a simplification later introduced in his work on character sums \cite[p.~13]{HarperDeterministic}.

By the law of total expectation and Jensen's inequality, we begin by writing
\begin{align}\label{eq:mainsum_part1}
    \mathbb{E}\bigg\{\bigg|\frac{1}{\sqrt{x}}\sum_{n\leq x}f(n)\bigg|^{2q}\bigg\} &\leq \mathbb{E}\Bigg\{\mathbb{E}\bigg\{\bigg|\frac{1}{\sqrt{x}}\sum_{n\leq x}f(n)\bigg|^{2}\,\bigg|\,\mathcal{F}_y\bigg\}^q \Bigg\},
\end{align}
where $\mathcal{F}_y$ is the  $\sigma-$algebra generated by $(Z_p)_{p\leq y}$.
Using the multiplicativity of $f$, we can decompose the partial sum of $f(n)$ up to $n\leq x$ as
\begin{align*}
    \sum_{n\leq x}f(n)=\sum_{\substack{n_1n_2\leq x\\ p|n_1 \implies p>y\\p|n_2\implies p\leq y}}f(n_1n_2)=\sum_{\substack{1\leq n_1\leq x\\p|n_1\implies p>y}} f(n_1)\sum_{\substack{n_2\leq x/n_1\\ p|n_2\implies p\leq y}} f(n_2),
\end{align*}
and use the orthogonality relation $\mathbb{E}\big\{f(n)\overline{f(m)}\big\} = \mathbf{1}(n=m)$. Note that this still holds upon conditioning on $\mathcal{F}_y$, provided $m$ and $n$ only have prime factors strictly greater than $y$. It follows that \eqref{eq:mainsum_part1} is
\begin{equation}\label{eq:mainsum_part2}
         \leq \mathbb{E}\Bigg\{\Bigg(\sum_{\substack{1\leq n_1\leq x\\p|n_1\implies p>y}} \Bigg|\frac{1}{\sqrt{x}}\sum_{\substack{n_2\leq x/n_1\\ p|n_2\implies p\leq y}} f(n_2)\Bigg|^{2}\Bigg)^q\Bigg\}.
\end{equation}

The strategy then consists of smoothing the outer summation into an integral, in order to pick up the density of integers $n_1$ which are $y$--rough (meaning $p|n_1\!\!\implies\!\! p>y$). That being said, this only yields the desired savings if $n_1$ is large enough ($> x^{3/4},$ say), and we must therefore handle the sum over smaller $n_1$ separately. 

We can separate the $q$-th moment of the sum over $1\leq n_1\leq x^{3/4}$ from the quantity in \eqref{eq:mainsum_part2} by subadditivity of $x\mapsto x^q$ and Jensen's inequality, since $q<1$. This gives 
\[
    \Bigg(\sum_{\substack{1\leq n_1\leq x^{3/4}\\p|n_1\implies p>y}} \frac{1}{x}\mathbb{E}\bigg\{\bigg|\sum_{\substack{n_2\leq x/n_1\\ p|n_2\implies p\leq y}} f(n_2)\bigg|^{2}\bigg\}\Bigg)^q=\Bigg( \sum_{\substack{1\leq n_1\leq x^{3/4}\\p|n_1\implies p>y}} \frac{\Psi(x/n_1, y)}{x}\Bigg)^{q},
\]
where $\Psi(x,y)$ counts the number of $y$-smooth numbers $n\leq x$ (meaning $p|n\!\!\implies\!\! p\leq y$). Using a well-known estimate for $\Psi$ (Theorem 5.3.1 in \cite{CojocaruMurty}), this is
\begin{align}\label{eq:error}
	\ll \bigg(\sum_{\substack{1\leq n_1 \leq x^{3/4}\\ p|n_1\implies p>y}} \frac{(\log y)^{A}}{n_1} e^{-c\tfrac{\log(x/n_1)}{\log y}}\bigg)^q\leq {(\log y)^{Aq}}e^{-cq\tfrac{\log x}{\log y}} \bigg(1+\sum_{y<n_1\leq x^{3/4}} e^{c\tfrac{\log n_1}{\log y}}/n_1\bigg)^q,
\end{align}
for a pair of absolute constants $A,c>0$. The sum in the right-hand side is bounded by
\[
    \int_{\lceil y\rceil}^{x^{3/4}+1} \frac{e^{c\log u/\log y}}{u}\mathrm{d}u\ll \int_{\log y}^{(3/4)\log x} e^{cv/\log y}\mathrm{d}v\ll (\log y) e^{c\tfrac{\log (x^{3/4})}{\log y}},
\]
and it follows that \eqref{eq:error} is $\ll (\log y)^{(A+1)q}e^{-cq(\log x^{1/4}/\log y)}=o\big((\log\log y)^{-q/2}\big)$ uniformly over $q$.

We now turn to the sum over $n_1\in (x^{3/4}, x]$ in Equation \eqref{eq:mainsum_part2}. By grouping terms according to the value $r$ of $\lfloor x/n_1\rfloor$, we can rewrite this sum as 
\[
	\sum_{1\leq r <x^{1/4}} \bigg|\frac{1}{\sqrt{r}}\sum_{\substack{n_2\leq r\\ p|n_2\implies p\leq y}} f(n_2)\bigg|^{2} \sum_{\substack{x^{3/4}< n_1\leq x\\p|n_1\implies p>y\\ \lfloor x/n_1\rfloor = r}} \frac{1}{n_1}.
\]
The inner sum over $n_1$ can now be estimated using the approximate density of $y$-rough numbers. Indeed, if we let $\Phi(x,y)$ count the number of such integers smaller than $x$, a standard sieve estimate yields
\[
	\sum_{\substack{x^{3/4}\leq n_1\leq x\\p|n_1\implies p>y\\ \lfloor x/n_1\rfloor = r}} \frac{1}{n_1} \leq \frac{\Phi(x/r,y)-\Phi(x/(r+1),y)}{x/r} \ll \frac{1}{r\cdot \log y},
\]
uniformly over $r\in [1,x^{1/4})$ (see \cite{CojocaruMurty}, Theorem 6.2.5). It follows that
\begin{align*}
    \mathbb{E}\Bigg\{\Bigg(\sum_{\substack{1\leq n_1\leq x\\p|n_1\implies p>y}} \Bigg|\frac{1}{\sqrt{x}}\sum_{\substack{n_2\leq x/n_1\\ p|n_2\implies p\leq y}} f(n_2)\Bigg|^{2}\Bigg)^q\Bigg\} &\ll \frac{1}{(\log y)^q}\mathbb{E}\Bigg\{\Bigg(\sum_{1\leq r<x^{1/4}} \Bigg|\frac{1}{r}\sum_{\substack{n_2\leq r\\ p|n_2\implies p\leq y}} f(n_2)\Bigg|^{2}\Bigg)^q\Bigg\} \\
    & \ll\frac{1}{(\log y)^q}\mathbb{E}\Bigg\{\Bigg(\int_0^{\infty} \bigg|\sum_{\substack{n_2\leq h\\ p|n_2\implies p\leq y}} f(n_2)\bigg|^{2}\frac{\mathrm{d}h}{h^2}\Bigg)^q\Bigg\},
\end{align*}
which by Parseval's theorem (in the form of Equation (5.26) in \cite{MontgomeryVaughan}) equals
\[
    \frac{1}{(\log y)^q}\mathbb{E}\Bigg\{\Bigg(\int_{\mathbb{R}} \frac{|F_{y}(1/2+ih)|^2}{|1/2+ih|^2}\mathrm{d}h\Bigg)^q\Bigg\}.
\]
Noting that $(f(p)p^{-ih},h\in [0,1])$ is equal in distribution to $(f(p)p^{-i(h+n)},h\in [0,1])$ for any fixed $n\in \mathbb{Z}$, this is
\begin{equation}\label{eq:choppingup}
     \ll \frac{1}{(\log y)^q}\mathbb{E}\bigg\{\bigg(\int_0^{1} {|F_{y}(1/2+ih)|^2}\mathrm{d}h\bigg)^q\bigg\} \bigg(\sum_{n\in \mathbb{Z}}\frac{1}{(1+n^2)^q}\bigg)
\end{equation}
 for $q\geq 2/3$. The claim follows since the sum over $n$ is bounded.
\subsection{Bounding moments of integrals of Euler products}\label{sec:restrictedeuler} 

We now turn to the proof of Proposition \ref{prop:2bound}. Without loss of generality, assume that $x>C_0$ for a fixed, large constant $C_0>0$ and set
\[
    t=t(x)=\log\log x.
\]
 We define the following second-order approximation to $\log|F_x(s)|$:
\begin{equation}\label{eq:walk}
    S_t(s)=\sum_{\
     C_0<p\leq \exp(e^t)} X_p(s),\text{ where } X_p(s):= \Re\bigg(\frac{Z_p}{p^{s}}+\frac{Z_p^2}{2p^{2s}}\bigg).
\end{equation}
This choice of notation reflects our intention to view $S_t(1/2+ih)$ (and hence $\log |F_x(1/2+ih)|$) as a random walk with $t$ increments $S_{j}(1/2+ih)-S_{j-1}(1/2+ih)$ of variance
\[
   \mathbb{E}\Big\{\big(S_{j}(1/2+ih)-S_{j-1}(1/2+ih)\big)^2\Big\}= \frac{1}{2}\sum_{\exp(e^{j-1})<p\leq \exp(e^j)} \Big(\frac{1}{p}+\frac{1}{4p^2}\Big),
\]
which is roughly $1/2$ by the prime number theorem. 

Noting that
\begin{equation}\label{eq:trimming}
        {|F_{x}(1/2+ih)|^2}\ll e^{2S_{t}(1/2+ih)+\sum_{p}\sum_{k\geq 3}p^{-k/2}} \leq e^{2S_t(1/2+ih)+4\sum_p p^{-3/2}} \ll e^{2S_t(1/2+ih)},
\end{equation}
for any $h\in [0,1]$,
it suffices to show that for any $q\in [0,1]$,
\begin{equation}\label{eq:helsonmoment}
    \mathbb{E}\big\{\mathcal{Z}_2^q\big\}\ll \big((1-q)\sqrt{t}+1\big)^{-q},\quad \text{ where } \mathcal{Z}_{2}:= \frac{1}{e^t}\int_0^1 e^{2 S_t(1/2+ih)}\mathrm{d}h.
\end{equation}
Furthermore, we may assume that $q<1-1/\sqrt{t}$; the desired bound is simply $\ll 1$ otherwise, in which case it follows directly from  Hölder's inequality and the Laplace transform estimate in Lemma \ref{lem:LDestimates}:
\begin{equation}\label{eq:trivialcritical}
    \mathbb{E}\big\{\mathcal{Z}_2^q\big\} \ll {\mathbb{E}\big\{e^{2S_t(1/2+ih)}\big\}^q} e^{-tq}\ll 1.
\end{equation}

\bigskip 

Assuming that $q\in [0,1-1/\sqrt{t}]$, we now proceed in two steps. The first will be to estimate the expectation of
\[
     \mathcal{Z}_2(A):=\frac{1}{e^t}\int_0^1e^{2 S_{t}(1/2+ih)}\mathbf{1}(h\in G_A)\mathrm{d}h,
\]
where $G_A\subseteq [0,1]$ is a suitably chosen set of ``good points" that we define shortly (cf.\ Proposition \ref{prop:boundZ(A)}). We then leverage the fact that most points $h$ are in $G_A$ with high probability (cf.\ Lemma \ref{lem:goodevent}) to upgrade this estimate to \eqref{eq:helsonmoment}. To define $G_A$, we make the observation that $(S_t(1/2+ih))_{h\in [0,1]}$ is approximately a \textit{logarithmically-correlated field}; that is, $S_t(1/2+ih)$ is approximately Gaussian for each $h$ (cf. Lemma \ref{lem:berryesseen}), and 
 \[
     \mathbb{E}\big\{S_t(1/2+ih)S_t(1/2+ih')\big\}\approx\frac{1}{2}\log\frac{1}{|h-h'|\lor e^{-t}}.
 \]
  (This can be made precise by a straightforward application of a quantitative prime number theorem.) The extremal statistics of such stochastic processes have been extensively studied \cite{ArguinSurvey}, beginning with the pioneering work of Bramson on branching Brownian motion \cite{Bramson}. Using a similar approach, we will prove in Lemma \ref{lem:goodevent} that for large $t$,
  \begin{equation}\label{eq:}
     \mathbb{P}\Big(\max_{h\in [0,1]} S_t(1/2+ih) \leq m(t)+A\Big)\to 1 \text{ as $A\to\infty$},\quad \text{where } m(t):=t-\frac{3}{4}\log t.
  \end{equation}
  To be precise, we will argue that the paths $j\mapsto S_j(1/2+ih)$ for which $S_t(1/2+ih)\approx A+m(t)$ typically display linear growth, while remaining under a \textit{barrier} $m(j)+B(j)+A$ at each time $j$. Crucially, we can pick $B(j)$ to be \textit{smaller} than the typical fluctuations of a Brownian bridge from $A$ to $m(t)+A$, which ultimately yields a logarithmic correction in the order of the maximum (an additional $-\tfrac{1}{2}\log t$).

For Proposition \ref{prop:2bound}, we only require a weak version of this fact where $B(j)$ is relatively large. Let
\[
    G_{A,\sigma}=\{h\in [0,1]: S_j(\sigma+ih)\in [L_A(j),U_A(j)],  A/4\leq j\leq t \}
\]
for any $\sigma\in \mathbb{R}$ and $A>4C_0$, where 
 \begin{equation}\label{eq:upperlowerbarrier}
    U_A(j)=A+j+2\log\!\big(1+j\land (t-j)\big),\quad L_A(j)=A-20j,\quad \forall A/4\leq j\leq t,
\end{equation}
and $U_A(j)=-L_A(j)=\infty$ for smaller $j$. The lower barrier $L_A$ is needed later in the proof, when approximating $S_t$ by a bona fide Gaussian random walk. In this section, we only consider $\sigma=1/2$ and therefore omit the dependence on $\sigma$, writing $G_A=G_{A,1/2}$.
 \begin{proposition}\label{prop:boundZ(A)} Uniformly in all large $t$ and $4C_0<A\leq  3\sqrt{t}$, $\mathbb{E}\big\{\mathcal{Z}_2(A)\big\}\ll {A}/{\sqrt{t}}$.
\end{proposition}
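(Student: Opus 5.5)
By Fubini and translation invariance in law of $h\mapsto S_j(1/2+ih)$ (the $Z_p$ are rotation invariant), it suffices to prove, for a single fixed $h$, say $h=0$,
\[
e^{-t}\,\mathbb{E}\big\{e^{2S_t}\mathbf{1}\big(S_j\in[L_A(j),U_A(j)]\ \forall\, A/4\le j\le t\big)\big\}\ll A/\sqrt t,
\]
where $S_j=S_j(1/2)$. This is a first-moment computation for a single random walk. The exponential weight is handled by a change of measure, and the barrier event by a ballot-type estimate.

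\textbf{Step 1 (tilting).} Define $\widetilde{\mathbb P}$ by $d\widetilde{\mathbb P}/d\mathbb P=e^{2S_t}/\mathbb{E} e^{2S_t}$. Since the $Z_p$ are independent, the increments $Y_j=S_j-S_{j-1}$ remain independent under $\widetilde{\mathbb P}$. By the Laplace transform estimates of Lemma~\ref{lem:LDestimates}:
\begin{itemize}
\item $\mathbb{E} e^{2S_t}\ll e^{t}$;
\item the tilted increment $Y_j$ has mean $1+O(e^{-j})$ (each prime $p$ contributes drift $\approx 1/p$ after tilting by $2X_p$, and $\sum 1/p$ over the block is $\approx 1$);
\item its variance is $1/2+O(e^{-j})$, and it has uniformly bounded exponential moments.
\end{itemize}
The left-hand side is therefore $\ll \widetilde{\mathbb P}(S_j\in[L_A(j),U_A(j)],\ A/4\le j\le t)$. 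Centering, put $W_j=S_j-\widetilde{\mathbb E}S_j=S_j-j+O(1)$. The barrier event then becomes $W_j\le A+2\log(1+j\wedge(t-j))+O(1)$, together with $W_j\ge A-21j$.

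\textbf{Step 2 (ballot estimate).} It remains to show that a mean-zero walk with independent, non-identically distributed increments of variance $\approx 1/2$ stays below the curve $A+2\log(1+j\wedge(t-j))$ for $A/4\le j\le t$ with probability $\ll A/\sqrt t$. Before time $A/4$ there is no constraint, but $W_{A/4}$ has standard deviation $\asymp\sqrt A$, and $A\le 3\sqrt t$. I would argue as follows.
\begin{itemize}
\item Condition on $W_{\lfloor A/4\rfloor}=w$, and use the fact that $\widetilde{\mathbb E}\,(A-W_{A/4})_+\ll A$.
\item For the walk started from height $u=A-w+O(1)$ below the barrier, the probability of staying under a logarithmic barrier for $t$ steps is $\ll (1+u_+)/\sqrt t$. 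This is the standard ballot theorem with logarithmic barrier perturbation, as in Bramson's argument.
\end{itemize}
To make the second point rigorous in this non-Gaussian setting, I would approximate the increments by Gaussians using the Berry--Esseen/KMT-type comparisons in Lemma~\ref{lem:berryesseen} and the appendix. The lower barrier $L_A$ guarantees that the walk stays in a range where such coupling errors are controllable. Alternatively, one can use a direct ballot lemma for independent increments with bounded exponential moments: the $2\log$ correction costs only a constant factor, since $\sum_j j^{-3/2}\log j<\infty$.

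\textbf{Main obstacle.} The main obstacle is Step~2, namely obtaining the sharp $A/\sqrt t$ (rather than $A^2/\sqrt t$ or an extra $\log t$) uniformly in $A\le 3\sqrt t$ for an inhomogeneous, non-Gaussian walk with a curved barrier. I would first try a discrete Gaussian coupling applied blockwise, dyadically in $j$ from both ends, and then invoke the Gaussian ballot bound. I would absorb the non-Gaussian errors at the first few and last few steps (which carry $O(e^{-j})$ corrections) using the exponential moment bounds. Summing the Gaussian bound over $w$ against the law of $W_{A/4}$ then gives the claimed $\ll A/\sqrt t$.
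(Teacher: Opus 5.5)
Your proposal is correct in outline, but it takes a different route from the paper.

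\textbf{The paper's route.} The paper never tilts by $e^{2S_t}$. It writes $\mathbb{E}\{\mathcal{Z}_2(A)\}$ through the layer-cake formula \eqref{eq:LDpartition}, i.e.\ as an integral of $2e^{2V}$ against $\mathbb{P}(S_t(1/2)>V,\,0\in G_A)$. It bounds this probability by Lemma~\ref{lem:largedeviations}, which is proved by:
\begin{itemize}
\item discretising the increments on meshes $j^{-4}$;
\item applying Berry--Esseen under the \emph{original} measure, made multiplicative via $|u_j|\le 100j$ (this is where $L_A$ enters);
\item invoking the endpoint-constrained Gaussian ballot bound of Proposition~\ref{cor:Hittingprob}.
\end{itemize}
It then integrates explicitly in $V$ with $u=(t-V)/\sqrt t$, using $A\le 3\sqrt t$ to absorb an $A^2/t$ term.

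\textbf{Your route.} Your exact single-point tilt turns the weight $e^{2S_t}$ into a unit drift that cancels the slope of $U_A$. You are then left with an endpoint-free ballot problem for a centred walk below $A+2\log(1+j\wedge(t-j))+O(1)$. For $\gamma=2$ this is more direct and avoids the $V$-integration altogether. It costs you two things.
\begin{itemize}
\item The Gaussian comparison must be done under the tilted measure: Berry--Esseen for the tilted increments, as in the $\mathbb{Q}$-version of Lemma~\ref{lem:berryesseen}, with the same discretisation and the same use of $L_A$. This is essentially Harper's tilting strategy, which the paper's level-set approach is designed to sidestep.
\item The endpoint-free ballot bound you need is not the one stated in Proposition~\ref{cor:Hittingprob}. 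You would get it by summing endpoint-constrained bounds over the terminal value, or by citing a standard ballot lemma.
\end{itemize}
In exchange, the paper's route yields Lemma~\ref{lem:largedeviations} as a reusable level-set estimate. It is applied again with $U_A^{\max}$ in the supercritical bound \eqref{eq:supercritical_ongood}, in Proposition~\ref{prop:maximum}, and in Corollary~\ref{cor:level}. Your tilt can be adapted to $\gamma>2$ too (tilt by $e^{2S_t}$ and keep the residual weight $e^{(\gamma-2)S_t}$), but that again needs an endpoint-constrained ballot estimate.

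\textbf{Two small corrections.}
\begin{itemize}
\item When you restart the walk at time $A/4$, the barrier is $A+2\log(1+A/4+s)$, not $A+2\log(1+s)$. The effective starting gap is therefore $u+O(\log A)$ rather than $u$, which is harmless after averaging against the law of $W_{A/4}$.
\item The non-Gaussian errors are $O(e^{-ce^{j/2}})$, so they are largest for the \emph{early} increments just after time $A/4$, not the last few. This is why one needs $A/4>C_0$ for the multiplicative error factors to have a convergent product.
\end{itemize}
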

\begin{proof}
    We express the integral on the right-hand side in terms of the {large deviation frequencies} of $S_t$. Letting
\begin{equation}\label{eq:defmeas}
    \mathcal{S}(V,G_A):=\text{meas}\{h\in [0,1]:S_{t}(1/2+ih)>V, h\in G_A\},
\end{equation}
for any $V>0$, we can rewrite $\mathbb{E}\{\mathcal{Z}_2(A)\}$ using Fubini's theorem as
\begin{equation}\label{eq:LDpartition}
    \mathbb{E}\bigg\{\frac{1}{e^t}\int_0^1 \int_{-\infty}^\infty 2 e^{2 V}\mathbf{1}(h\in G_A, S_{t}(1/2+ih)>V)\mathrm{d}V\mathrm{d}h\bigg\}=\frac{1}{e^t}\int_{-\infty}^{U_A(t)}2 e^{2 V}\mathbb{E}\big\{\mathcal{S}(V,G_A)\big\}\mathrm{d}V.
\end{equation}
To bound $\mathbb{E}\mathcal{S}(V,G_A)$, we derive the following bound on the large deviation probability of $S_{t}(1/2+ih)$ when $h\in G_A$. Informally, it states that this probability is comparable to that of a Brownian bridge of length $t$, from 0 to $V$, remaining under $U_A(s)$ at all times $s\in [0,t]$. For later use, we state a more general result which applies to $S_t(\sigma+ih)$ for $\sigma$ near $1/2$, and another choice of envelope $U_A(s)$. The proof is technical and will be given in Section \ref{sec:largedeviations}.

\begin{lemma}\label{lem:largedeviations} Let $h\in[0,1]$ and $x$ be large. Then there exists an absolute constant $C>0$ such that the following holds. Fix $\sigma_k=1/2-k/\log x$ and $0\leq k\leq \log\log\log x$. For any $A/4<t\leq \log\log x-k$, $4C_0<A\leq 3\sqrt{t}$ and $0\leq V\leq U_A(t)$, 
\[
    \mathbb{P}\big(S_{t}(\sigma_k+ih)>V, h\in {G_{A,\sigma_k}}\big)\leq C\cdot \frac{A\big(U_A(t)-V+C\big)}{t}\frac{e^{-V^2/t}}{\sqrt{t}}.
\]
Furthermore, the same bound holds upon replacing $G_{A,\sigma_k}$ and $U_A$ by $G_{A,\sigma_k}^{\mathrm{max}}$ and $U_A^{\mathrm{max}}$ (defined in Equations \eqref{def:Gmax} and \eqref{def:Umax}, respectively).
\end{lemma}
\begin{proof}
    See Section \ref{sec:largedeviations}.
\end{proof}

Noting that $S_t(1/2+ih)$ is equal in distribution to $S_t(1/2)$ for any $h\in [0,1]$, Fubini's theorem yields
\[
    \mathbb{E}\big\{\mathcal{S}(V,G_A)\big\}=\int_0^1 \mathbb{P}\big(S_t(1/2+ih)>V, h\in G_A\big)\mathrm{d}h=\mathbb{P}\big(S_{t}(1/2)>V,0\in G_A\big), 
\]
and it follows by Lemma \ref{lem:largedeviations} that 
\begin{align*}
    \mathbb{E}\big\{\mathcal{Z}_2 (A)\big\}&\ll \frac{1}{e^t}\int_{-\infty}^0 e^{2V} \mathrm{d}V+\frac{1}{e^t}\int_{0}^{U_A(t)}\frac{A\big(U_A(t)- V+C\big)}{t}\frac{e^{2V-V^2/t}}{\sqrt{t}}\mathrm{d}V,\\
    &\ll O(e^{-t})+\frac{1}{e^t}\int_{0}^{A+t}\frac{A\big(A+t- V+C\big)}{t}\frac{e^{2V-V^2/t}}{\sqrt{t}}\mathrm{d}V.
\end{align*}
By the change of variables $u=({t}-V)/\sqrt{t}$, the remaining integral is bounded by
\begin{align*}
    \ll \frac{A}{\sqrt{t}}\int_{-A/\sqrt{t}}^{\sqrt{t}} (3+o(1)+|u|/2)e^{-u^2}\mathrm{d}u 
    &\ll \frac{A}{\sqrt{t}}\int_{0}^\infty (1+u)e^{-u^2}\mathrm{d}u \ll \frac{A}{\sqrt{t}}.\qedhere
\end{align*}
\end{proof}

\begin{lemma}\label{lem:goodevent} Uniformly in all large $t$ and $4C_0<A\leq 3 \sqrt{t}$, $\mathbb{P}\big(\exists h\notin G_A \big) \ll e^{-2A}$.
\end{lemma}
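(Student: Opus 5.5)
We bound $\mathbb{P}(\exists h\notin G_A)$ by a union bound over the times $j\in[A/4,t]$ and over the two barriers. Since $U_A(j)\ge A+j\ge A\,(1+1/4)$ and $L_A(j)\le A-5A<0$ on this range, a point $h$ leaves $G_A$ only if, for some integer $j\in[A/4,t]$, either $S_j(1/2+ih)>U_A(j)$ or $S_j(1/2+ih)<L_A(j)$. (If the barriers are meant for real $j$, we first pass to integer times, absorbing the change between consecutive $j$ into a constant as below.)

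To handle the supremum over $h$, we discretise. At level $j$ the process $h\mapsto S_j(1/2+ih)$ only involves primes $p\le \exp(e^j)$. Its derivative in $h$ is of size roughly $\sum_{p\le \exp(e^j)}\log p/\sqrt p$ in $L^2$, namely $e^{j}$. So on a grid $\mathcal{H}_j$ of mesh $e^{-j}/j^{2}$ (about $j^2e^j$ points) the oscillation within a cell is $O(1)$, except on an event of probability $\ll e^{-10A}$.

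To make this rigorous, we bound high moments, or the exponential moment $\mathbb{E}\,e^{\lambda\sup_{|u|\le \delta}|S_j(1/2+ih+iu)-S_j(1/2+ih)|}$, by a chaining or Sobolev-type argument. An alternative is a Bernstein inequality for the Dirichlet polynomial: it is a trigonometric polynomial in $h$ with frequencies at most $e^j$, so its maximum over an interval of length $e^{-j}$ is controlled by its values on $O(1)$ sample points plus a small multiple of the maximum. After this step it suffices to bound
\[
\sum_{A/4\le j\le t}\ j^2e^{j}\Big(\mathbb{P}\big(S_j(1/2)>U_A(j)-C\big)+\mathbb{P}\big(S_j(1/2)<L_A(j)+C\big)\Big),
\]
using the translation invariance in law of $S_j(1/2+ih)$ in $h$.

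Both tail probabilities come from the Laplace transform estimate in Lemma~\ref{lem:LDestimates}, used at the appropriate $\lambda$, which is of order $\mathbb{E}\,e^{\lambda S_j}\ll e^{\lambda^2 j/4}$. This is where the second-order term in $X_p$ and the Gaussian approximation enter.

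For the upper barrier, take $\lambda=2$. This gives $\mathbb{P}(S_j>U_A(j)-C)\ll e^{j-2A-2j-4\log(1+j\wedge(t-j))}$. Multiplying by the $j^2e^j$ grid points leaves $e^{-2A}j^2(1+j\wedge(t-j))^{-4}$. The grid factor $j^2$ is wasteful when $t-j$ is small, so the factor $j^2$ must be reduced to $O(1)$ by a better discretisation, i.e.\ a grid of mesh $ce^{-j}$ with the oscillation controlled in exponential moment at cost $O(1)$. The term then becomes $e^{-2A}(1+j\wedge(t-j))^{-4}$, which sums over $j$ to $\ll e^{-2A}$.

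For the lower barrier, use the Chernoff bound with $\lambda=-\mu$. Since $|L_A(j)|\ge 19j$ and in fact $\ge 15j+3A$, the exponent $\mu^2 j/4-\mu\cdot 19j\cdot(\text{const})+j$ can be made as negative as $-Cj-CA$ for a moderate $\mu$ (say $\mu=20$). This gives a contribution far smaller than $e^{-2A}$. Here we need Lemma~\ref{lem:LDestimates} to hold for such $\lambda$ of bounded size, which is reasonable since $X_p$ is bounded.

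The main obstacle is the discretisation step with only $O(1)$ loss, both in the number of points (about $e^j$) and in the barrier shift. Here I would first try the Bernstein/Sobolev argument. The bound $\sup_I|g|^2\ll |I|^{-1}\int_I|g|^2+|I|\int_I|g'|^2$, applied to $e^{S_j}$ on intervals of length $e^{-j}$, gives $\mathbb{P}(\sup_I S_j>u)\le e^{-2u}\,\mathbb{E}\sup_I e^{2S_j}\ll e^{-2u}e^{-j}\,\mathbb{E}\int_I(e^{2S_j}+|\partial_h e^{S_j}|^2e^{-2j})$. The derivative term, after tilting, has the same size as the main term, because $\partial_h S_j$ has variance $\asymp e^{2j}$ even under the exponential tilt. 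Summing over the $e^j$ intervals then reproduces exactly the bound above without extra factors. A careful version runs this with the tilt by $e^{2S_j}$ using the Gaussian approximation estimates from the appendix.
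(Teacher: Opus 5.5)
Your proof is correct. Its skeleton is the paper's: a union bound over $j\in[A/4,t]$ and the two barriers, a partition of $[0,1]$ into $\asymp e^j$ intervals of length $e^{-j}$, and translation invariance in law. The difference is how you control the supremum over one such interval.

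The paper invokes Lemma~\ref{lem:chaining}, proved by $e$-adic chaining together with the two-point estimate of Lemma~\ref{lem:LD}. This gives the Gaussian tail $\mathbb{P}(\max_{h\in[0,e^{-j})}S_j(\tfrac12+ih)>V)\ll e^{-V^2/j}$, and then $e^{j-U_A(j)^2/j}\le e^{-2A}(1+j\wedge(t-j))^{-4}$.

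You instead apply the Sobolev inequality $\sup_I|g|^2\le 2|I|^{-1}\int_I|g|^2+2|I|\int_I|g'|^2$ to $g=e^{S_j}$. This reduces everything to $\mathbb{E}e^{2S_j}\ll e^{j}$ (Lemma~\ref{lem:LDestimates}) and to a tilted second moment $\mathbb{E}_{\tilde{\mathbb P}}|\partial_h S_j(\tfrac12)|^2\ll\sum_{p\le\exp(e^j)}(\log p)^2/p\ll e^{2j}$. The latter holds for three reasons: the tilt by $e^{2S_j}$ preserves independence across primes, it has bounded density prime by prime, and the tilted mean of $\partial_h X_p$ at $h=0$ vanishes under $Z_p\mapsto\overline{Z_p}$. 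That symmetry is the point to check, since you need the second moment and not merely the variance.

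The result is $\mathbb{P}(\sup_I S_j>u)\ll e^{j-2u}$, hence exactly $e^{-2A}(1+j\wedge(t-j))^{-4}$ after the $e^j$ intervals. The lower barrier goes the same way with $g=e^{-10S_j}$.

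What each route buys:
\begin{itemize}
\item Your route is shorter and self-contained. It needs no multiscale bookkeeping and no $V_2^{3/2}$ two-point bound.
\item At fixed $\lambda=2$ it loses the factor $e^{-A^2/j}$. The paper needs that factor when it recycles ``the proof of Lemma~\ref{lem:goodevent}'' in Proposition~\ref{prop:maximum}.
\item Choosing $\lambda=2V/j$ in your Sobolev bound recovers $e^{-V^2/j}$ up to a factor $1+\lambda^2\ll1$. So your argument could in fact replace Lemma~\ref{lem:chaining}.
\item The paper's chaining form, as a sum of increments $S_j(\sigma_k+iv)-S_j(\sigma_k+iv_*)$, is what it later adapts under tilted measures jointly with the barrier event in Proposition~\ref{prop:maximum}.
\end{itemize}

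Three small slips, none affecting the conclusion:
\begin{itemize}
\item In your Sobolev display the prefactor should be $e^{j}=|I|^{-1}$, not $e^{-j}$. With $e^{-j}$ you would not get ``exactly the bound above''.
\item For $j\ge A/4$ you only have $|L_A(j)|=20j-A\ge16j$, not $\ge19j$ or $\ge15j+3A$. This is still ample with $\mu=20$: the exponent is at most $-219j$.
\item The global Bernstein alternative is unnecessary. As stated it would not work, since $\sup_{h\in\mathbb R}S_j(\tfrac12+ih)$ is enormous.
\end{itemize}
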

\begin{proof} 
    A union bound over $j\leq t$ yields
    \begin{equation}\label{eq:unionbound}
        \mathbb{P}\big(\exists h\notin G_A \big)\leq  \sum_{A/4\leq j\leq t} \mathbb{P}\big(\exists h: S_j(1/2+ih)>U_A(j)\big)+\sum_{A/4\leq j\leq t}\mathbb{P}\big(\exists h: S_j(1/2+ih)<L_A(j)\big)
    \end{equation}
    For each $j\leq t$, let $(I_j)_j$ be a partition of $[0,1]$ into disjoint intervals $I_j$ of width $e^{-j}$. Using the fact that $(S_j(1/2+ih), h\in I_j)\overset{d}{=}(S_j(1/2+ih), h\in [0,e^{-j}))$ for each $I_j$, another union bound yields
    \[
        \mathbb{P}\big(\exists h: S_j(1/2+ih)>U_A(j)\big) \leq \sum_{I_j} \mathbb{P}\big(\max_{h\in I_j} S_j(1/2+ih)>U_A(j)\big) = e^j \mathbb{P}\big(\max_{h\in [0,e^{-j})} S_j(1/2+ih)>U_A(j)\big).
    \]
    We then claim that $\max_{h\in [0,e^{-j})}S_j(1/2+ih)$ is comparable in law to $S_j(1/2)$. This is made rigorous by Lemma \ref{lem:chaining}, which we prove using a standard chaining argument in the appendix, and by which
    \begin{equation}\label{eq:maxcomplement}
        e^j\cdot \mathbb{P}\big(\max_{h\in [0,e^{-j})} S_j(1/2+ih)>U_A(j)\big) \ll \exp\big(j-U_A(j)^2/j\big) \ll e^{-2A} \big(1+j\land (t-j)\big)^{-2},
    \end{equation}
    uniformly in $j$. For the second sum in \eqref{eq:unionbound}, we simply note that
    \begin{equation}\label{eq:mincomplement}
        \mathbb{P}\big(\exists h: S_j(1/2+ih)<L_A(j)\big)= \mathbb{P}\big(\exists h: -S_j(1/2+ih)>20j-A\big),
    \end{equation}
    and that the bound in Lemma \ref{lem:chaining} applies to $\max_{h\in [0,e^{-j})}-S_j(1/2+ih)$ (cf.\ Remark \ref{rem:symmetry}). We can therefore use the same argument  used to bound $\mathbb{P}(\exists h:S_j(1/2+ih)>U_A(j))$, which in this case yields $\ll e^{j-400j+40A}\ll e^{-2A}j^{-2}$ provided $j\geq A/4$. Using this in \eqref{eq:unionbound} along with the estimate in \eqref{eq:maxcomplement} and summing over $j$ proves the lemma.
\end{proof}

 \begin{proof}[Proof of Proposition \ref{prop:2bound}]
     We use an interpolation argument of Soundararajan and Zaman \cite{SoundZaman}. Consider the sequence $(n_j)_{j\geq 1}$, defined by $n_j:= \frac{j}{(1-q)}$. Note that if $4C_0+1=:j_0\leq j\leq \lceil(1-q)\sqrt{t}\rceil$, $4C_0<n_j\leq 3 \sqrt{t}$ for large $t$, and in particular, Lemma \ref{lem:goodevent} and Proposition \ref{prop:boundZ(A)} apply with $A=n_j$.

Using the fact that $\mathbf{1}([0,1]\subseteq  G_{n_j})\leq \mathbf{1}(h_0\in G_{n_j})$ for any fixed $h_0$, we decompose $\mathcal{Z}_2$ iteratively as:
\begin{align}\label{eq:decomposition}
    \mathcal{Z}_2\leq \mathcal{Z}_2(n_{j_0})+\sum_{j_0\leq j\leq K} \mathbf{1}(\exists h\notin G_{n_j})\mathcal{Z}_2(n_{j+1})+\mathbf{1}(\exists h\notin G_{n_{K+1}})\mathcal{Z}_2
\end{align}
where $K=j_0+\lceil \sqrt{t}(1-q)\rceil$.
Taking the $q$-th moment of both sides and using the subadditivity of $x\mapsto x^q$, 
\begin{align*}
    \mathbb{E}\big\{\mathcal{Z}_2^q\big\} \leq \mathbb{E}\big\{\mathcal{Z}_2(n_{j_0})\big\}^q+ \sum_{j_0\leq j\leq K}\mathbb{E}\big\{\mathbf{1}(\exists h\notin G_{n_j})\mathcal{Z}_2(n_{j+1})^q\big\}+\mathbb{E}\big\{\mathbf{1}(\exists h\notin G_{n_{K+1}})\mathcal{Z}_2^q\big\},
\end{align*}
which by Hölder's inequality and Lemma \ref{lem:goodevent} is
\begin{align*}
&\ll\mathbb{E}\big\{\mathcal{Z}_2(n_{j_0})\big\}^q+ \sum_{j_0\leq j\leq K}\mathbb{P}(\exists h\notin G_{n_j})^{1-q}\mathbb{E}\big\{\mathcal{Z}_2(n_{j+1})\big\}^q+\mathbb{P}(\exists h\notin G_{n_{K+1}})^{1-q}\mathbb{E}\big\{\mathcal{Z}_2\big\}^q,\\
&\ll \mathbb{E}\big\{\mathcal{Z}_2(n_{j_0})\big\}^q+ \sum_{j_0\leq j\leq K} e^{-2j}\mathbb{E}\big\{\mathcal{Z}_2(n_{j+1})\big\}^q+e^{-2(K+1)}\mathbb{E}\big\{\mathcal{Z}_2\big\}^q.
\end{align*}
Using Proposition \ref{prop:boundZ(A)} and the estimate in \eqref{eq:trivialcritical}, we conclude that
\[
     \mathbb{E}\big\{\mathcal{Z}_2^q\big\} \ll e^{-2(K+1)}+\big(\sqrt{t}(1-q)\big)^{-q} \sum_{j\leq K} (j+1)^qe^{-2j} \ll \big(\sqrt{t}(1-q)\big)^{-q}.\qedhere
\]
 \end{proof}
\subsection{Proof of Lemma \ref{lem:largedeviations}}\label{sec:largedeviations} We use a discretisation argument inspired by that of \cite{FHK1} (Section 7), albeit in a simpler setting. A similar idea was used by Harper in \cite{HarperRMF}. To alleviate the notation, we will assume without loss of generality that $h=0$ and write $S_j=S_j(\sigma_k)$ for the remainder of this section.

Fix $r=\lceil A/4\rceil$, and let $Y_j:=S_j-S_{j-1}$ denote the $j$-th increment of $S_{t}$ when $j>r$. It will be helpful to abuse notation by defining $Y_r:=S_{r}$. To discretise the range of the $(Y_j)_{r\leq j\leq t}$, let $\mathcal{T}\subseteq \mathbb{R}^{t-r+1}$ denote the set of all (disjoint) tuples $(u_{r},...,u_{t})$ with $u_j\in \Delta_j\mathbb{Z}$, where $\Delta_j=j^{-4}$ for $j>r$ and $\Delta_r=1$. This mesh size ensures that $\sum_j\Delta_j<2$.

We begin with the following trivial inclusion
\begin{align}\label{eq:inclusion}
    \big\{ S_{t}\in [w,w+1),0\in G_A\big\} \subseteq \bigcup_{\mathcal{T}} \big\{Y_j\in[u_j, u_j+\Delta_j)\big\}\cap\big\{ S_{t}\in [w,w+1), 0\in G_A\big\}.
\end{align}
Furthermore, by definition of $G_A$, any $\mathbf{u}=(u_j)_j\in \mathcal{T}$ for which the intersection on the right-hand side is non-empty must necessarily satisfy the following constraints:
\begin{align}\label{eq:constraint1}
    w-2\leq w-\sum_j \Delta_j\leq &\sum_{j=r}^{t} u_j \leq w+1\\\label{eq:constraint2}
    L_A(\ell)-2\leq L_A(\ell)-\sum_j \Delta_j\leq&\sum_{j=r}^{\ell}u_j\leq U_A(\ell),\quad \forall r\leq \ell\leq t.
\end{align}
This also forces $|u_j|\leq 100j$ for each $j\in [r,t]$. 
Letting $\mathcal{T}'(w)$ be the set of all such $\mathbf{u}$, it follows that one can replace $\mathcal{T}$ by $\mathcal{T}'(w)$ in \eqref{eq:inclusion}. By a union bound and the fact that the  $Y_j$ are independent, we therefore conclude that
\begin{equation}\label{eq:tocompare}
    \mathbb{P}\big(S_{t}\in [w,w+1), 0\in G_A \big)\leq \sum_{(u_j)_j\in \mathcal{T}'(w)} \prod_{j=r}^{t}\mathbb{P}\big(Y_j\in [u_j,u_j+\Delta_j)\big).
\end{equation}
We now compare the probabilities on the right-hand side to a Gaussian counterpart. Namely, let $(\mathcal{N}_j)_{j\geq 1}$ denote a sequence of real, independent centered Gaussian random variables of variance $1/2$ when $j\leq r$, and $V_j(\sigma_k)=\sum_{\exp(e^{j-1})<p\leq \exp(e^j)}\frac{1}{2p^{2\sigma_k}}+\frac{1}{8p^{4\sigma_k}}$ otherwise. Define the random walk $\mathcal{G}_\ell=\sum_{1\leq j \leq \ell}\mathcal{N}_j$ for $\ell\geq 1$. Beginning with the $j=r$ term in \eqref{eq:tocompare}, we can use the estimate in Lemma \ref{lem:LDestimates} to get
\begin{align}\label{eq:first_increment}
    \mathbb{P}\big(Y_r\in [u_r,u_r+1)\big)\ll \frac{e^{-u_r^2/r}}{\sqrt{r}}\asymp\mathbb{P}\big(\mathcal{G}_r\in [u_r,u_r+1)\big).
\end{align}
 For $j>r$, the more precise estimate in Lemma \ref{lem:berryesseen} yields
\begin{align*}
    \mathbb{P}\big(Y_j\in [u_j,u_j+\Delta_j)\big)&=\mathbb{P}\big(\mathcal{N}_j\in [u_j,u_j+\Delta_j)\big)+O\big(e^{-ce^{j/2}}\big)\\
    &=\mathbb{P}\big(\mathcal{N}_j\in [u_j,u_j+\Delta_j)\big)\cdot\big(1+O(j^{-3})\big),
\end{align*}
where we have used the fact that $|u_j|\leq 100j$ to make the error multiplicative. Since $\prod_{j}(1+j^{-3})<\infty$, 
\begin{equation*}
    \mathbb{P}\big(S_{t}\in [w,w+1), h\in G_A \big) \ll \sum_{(u_j)_j\in \mathcal{T}'(w)}\mathbb{P}\big(\mathcal{G}_r\in [u_r,u_r+\Delta_r)\text{ and }\mathcal{N}_j\in[u_j,u_j+\Delta_j)\,\forall r<j\leq t\big).
\end{equation*}
We now undo the discretisation. For any $(u_j)_j\in \mathcal{T}'(w)$, the event in the right-hand side implies that
\[
    \forall r\leq \ell\leq t,\quad \mathcal{G}_\ell \leq U_A(\ell)+\sum_{j\leq \ell} \Delta_j\leq U_A(\ell)+2, \text{ and } |\mathcal{G}_{t}-w|\leq 1+\sum_{j=r}^t \Delta_j.
\]
By summing over $(u_j)_{j}\in\mathcal{T}'(w)$, we conclude that
\begin{equation}\label{eq:Gaussiancomparisonfinal}
    \mathbb{P}\big(S_{t}\in [w,w+1), 0\in G_A \big)\ll \mathbb{P}\big(\mathcal{G}_{t}>w-2, \mathcal{G}_{\ell}\leq U_A(\ell)+2,\forall  \ell\leq t\big).
\end{equation}
By Lemma \ref{lem:PNT}, $r/2+\sum_{j=r+1}^tV_j(\sigma_k)=t+O(1)$ uniformly in $0\leq k\leq \log\log\log x$. When $k=0$, $\sigma_k=1/2$ and $V_j(\sigma_k)={1}/{2}+O(e^{-c\sqrt{e^{r-1}}})$ by the same lemma. Otherwise,
\[
    V_j(\sigma_k)=\frac{1}{2}\Big(\mathrm{Ei}(ve)-\mathrm{Ei}(v)\Big)+O\Big(e^{-c\sqrt{e^{r-1}}}+\sum_{n>\exp(e^{r-1})}\frac{1}{n^{7/4}}\Big),\text{ where } v:=\frac{2k}{\log x}e^{j-1}\leq \frac{2}{e},
\]
uniformly in $r<j\leq t$.
The error term is contained in $[-1/4,1/4]$ by taking a larger $C_0$ if need be (recall that $r\geq C_0$), while the main term equals $\frac{1}{2}\int_{v}^{ev} (e^t/t) \mathrm{d}t\in [1/2,e^2/2]$ since $v\in (0,2/e)$. In both cases, we can use Proposition \ref{cor:Hittingprob} with $\kappa=1/4$ to estimate \eqref{eq:Gaussiancomparisonfinal}, which yields 
\[
    \ll \frac{A\big(U_A(t)-w+C\big)}{t}\frac{e^{-w^2/t}}{\sqrt{t}}
\]
for some $C>0$.
By partitioning the event $\{S_{t}(h)>V\}$ according to $S_{t}(h)-V\in [v,v+1)$ for $v\in \mathbb{Z}\cap [0,U_A(t)-V]$, we conclude that
\begin{align*}
    \mathbb{P}\big(S_{t}>V, 0\in G_A \big)\ll \frac{e^{-V^2/t}}{t^{3/2}}\sum_{v} {A\big(U_A(t)-V+C+v\big)}e^{-2v\alpha} \ll \frac{e^{-V^2/t}}{t^{3/2}}{A\big(U_A(t)-V+C'\big)}
\end{align*}
for some constant $C'> 0$.

\section{Proof of Theorem \ref{thm:main}}\label{sec:supercritical}

We now prove Theorem \ref{thm:main}. As before, the first step is a reduction to moments of an Euler product integral, carried out along the same lines as in Section \ref{sec:passing}. Owing to the additional technical complications introduced by the divisor function, we omit the details and adopt the corresponding reduction from \cite{Gerspach}. By elementary manipulations, this reduces the claim to that of Theorem \ref{thm:mainbound} as we now show.

\begin{proof}[Proof of Theorem \ref{thm:main} assuming Theorem \ref{thm:mainbound}]
By Proposition 3.6 in \cite{Gerspach}, 
\[
    \mathbb{E}\bigg\{\bigg|\frac{1}{\sqrt{x}}\sum_{n\leq x}d_\alpha(n)f(n)\bigg|^{2q}\bigg\}\ll \sum_{0\leq k\leq K} \mathbb{E}\bigg\{\bigg(\frac{1}{\log x}\int_{\mathbb{R}}\frac{|F_{x^{e^{-k}}}(\tfrac{1}{2}-\tfrac{k}{\log x}+it)|^{2\alpha}}{|\tfrac{1}{2}-\tfrac{k}{\log x}+it|^2}\bigg)^q\bigg\}+1,
\]
where $K=\lfloor \log\log\log x\rfloor$ for simplicity. Using subadditivity of $x\mapsto x^q$ and the rotational invariance in law of the $f(p)$ (as in Equation \eqref{eq:choppingup}), the right-hand side is
\begin{equation}\label{eq:supercriticalmoments}
    \ll \sum_{0\leq k\leq K} \mathbb{E}\bigg\{\bigg(\frac{1}{\log x}\int_{0}^1{\big|F_{x^{e^{-k}}}(\tfrac{1}{2}-\tfrac{k}{\log x}+ih)\big|^{2\alpha}}\bigg)^q\mathrm{d}h\bigg\}+1,
\end{equation}
provided $q>1/2$. We are free to pick such a $q$ since $(1/2,1/\alpha)$ is non-empty, and the claimed bound for any smaller $q'<q$ follows from the bound at $q$ by Hölder's inequality.

By Theorem \ref{thm:mainbound} (with $y=x^{e^{-k}}$, $\gamma=2\alpha$ and $\sigma=1/2-k/\log x$), the quantity in \eqref{eq:supercriticalmoments} is bounded by
\[
    \ll \Big(\sum_{k\geq 0} e^{-kq(2(\alpha-1)+1)}\Big)\frac{(\log x)^{2q(\alpha-1)}}{(\log\log x)^{q(3\alpha/2)}(1-\alpha q)+1}.\qedhere
\]
\end{proof}
\subsection{Proof of Theorem \ref{thm:mainbound}} Fix $0\leq k\leq  \lfloor \log\log\log x\rfloor$, and let $\sigma_k=\frac{1}{2}-\frac{k}{\log x}$. Letting $y$ be as in the statement of the theorem, we once again take
\[
    t=t(y)=\log\log y, 
\]
and note that $ t\leq \log\log x-k$ by the assumption on $y$. We also note that we can make $y$ (and $t$) large if need be without loss of generality.

The strategy follows that of Proposition \ref{prop:2bound}, with two main differences. The first is that a much more precise upper barrier is required. For a given $t$, we pick
\begin{equation}\label{def:Umax}
        U_A^{\text{max}}(j):=A+j\Big(1-\frac{3}{4}\frac{\log t}{t}\Big)+\mathcal{C}\cdot \log\big(1+j\land (t-j)\big),\quad (j\geq A/4)
\end{equation}
where $\mathcal{C}$ is any large enough constant (say, $\mathcal{C}=10^3$), and $ U_A^{\text{max}}(j)=\infty$ for smaller $j$. We expect $\max_{h\in [0,1]} S_j(\sigma_k+ih)$ to fluctuate around $j-\frac{3}{4}\log j$, $ U_A^{\text{max}}(j)$ allows it do so within a logarithmic bump, while forcing $S_t(\sigma_k+ih)$ not to exceed $t-\frac{3}{4}\log t$ by more than $A$. We will need a version of Lemma \ref{lem:goodevent} for the good set
\begin{equation}\label{def:Gmax}
        G_{A,\sigma_k}^{\text{max}}=\{h\in [0,1]: S_j(\sigma_k+ih)\in [L_A(j),U_A^{\text{max}}(j)],\,\forall j\leq t \},
\end{equation}
the proof of which is rather involved and thus postponed to Section \ref{sec:maximum}.
\begin{proposition}\label{prop:maximum} Uniformly in all large $t$ and $4C_0<A\leq t/\log t$, $\mathbb{P}(\exists h\notin G_{A,\sigma_k}^{\mathrm{max}})\ll Ae^{-2A-A^2/t}$.
\end{proposition}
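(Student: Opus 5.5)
Split the bad event according to which barrier is crossed first and at what time, as in Lemma \ref{lem:goodevent}, but now with the sharp upper envelope $U_A^{\max}$, for which a naive union bound over time and mesh is too lossy. The lower barrier is handled exactly as in Lemma \ref{lem:goodevent}: for $j\ge A/4$, chaining (Lemma \ref{lem:chaining}) and the symmetry of Remark \ref{rem:symmetry} give $\mathbb{P}(\exists h: S_j<L_A(j))\ll e^{-2A}j^{-2}$. For $j<A/4$ nothing needs checking, since the barriers are infinite there. So the real task is to bound
\[
\mathbb{P}\big(\exists h\in[0,1],\ \exists j\le t: S_j(\sigma_k+ih)>U_A^{\max}(j)\big).
\]

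\textbf{First-crossing decomposition.} For each $j$, let $E_j$ be the event that some $h$ has $S_j(\sigma_k+ih)>U_A^{\max}(j)$ while $S_\ell(\sigma_k+ih)\le U_A^{\max}(\ell)+1$ and $S_\ell\ge L_A(\ell)$ for all $\ell<j$. Crossings between mesh scales are absorbed by the $+1$ slack, since $S_{j}$ is nearly constant on intervals of width $e^{-j}$. Partition $[0,1]$ into $e^{j}$ intervals $I$ of width $e^{-j}$. By translation invariance and a union bound,
\[
\mathbb{P}(E_j)\le e^j\,\mathbb{P}\Big(\max_{h\in[0,e^{-j})}S_j(\sigma_k+ih)>U_A^{\max}(j),\ S_\ell(\sigma_k)\le U_A^{\max}(\ell)+2\ \forall \ell<j-c\Big).
\]
Here the barrier condition up to time $j-c$ has been transferred from the maximiser to the point $0$, using that $S_\ell(\sigma_k+ih)-S_\ell(\sigma_k)$ is $O(1)$ with high probability for $\ell\le j-c$ and $|h|\le e^{-j}$. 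The chaining lemma controls this uniformly, with Gaussian tails.

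Next, decouple the max over the short interval, which only involves increments at the last $O(1)$ scales, from the walk up to time $j-c$. Conditioning on $S_{j-c}(\sigma_k)=U_A^{\max}(j)-u$, the chaining lemma bounds the probability of the overshoot by $\ll e^{-cu^2}$. The walk probability is then handled as in the proof of Lemma \ref{lem:largedeviations}: discretise the increments, compare with the Gaussian walk $\mathcal{G}$, and apply the barrier estimate (Proposition \ref{cor:Hittingprob}). For a Gaussian walk started below $A$, staying below the barrier up to time $j$ and ending at distance $u$ from it, this gives
\[
\ll \frac{A\,(u+\mathcal{C}\log(1+j\wedge(t-j))+1)(u+1)}{j^{3/2}}\,e^{-U_A^{\max}(j)^2/j}.
\]

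\textbf{Summing over $j$.} Expand
\[
\frac{U_A^{\max}(j)^2}{j}=j-\tfrac32\tfrac{j}{t}\log t+2A+2\mathcal{C}\log(1+j\wedge(t-j))+\frac{A^2}{j}+\dots
\]
Multiplying by $e^j$ then leaves
\[
A\,e^{-2A-A^2/j}\,t^{3j/(2t)}\,j^{-3/2}\,(1+j\wedge(t-j))^{-2\mathcal{C}+2}.
\]
For $j$ near $t$ we have $t^{3j/2t}j^{-3/2}\asymp 1$, and $(1+t-j)^{-2\mathcal C+2}$ is summable. For $j\le t/2$, the factor $t^{3j/2t}\le e^{3j\log t/2t}$ is at most polynomial in $j$ and is killed by $j^{-2\mathcal{C}}$. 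The factor $e^{-A^2/j}\le e^{-A^2/t}$ throughout, which yields $Ae^{-2A-A^2/t}$. The restriction $A\le t/\log t$ ensures that the cross terms $A\log t/t$ in $U^2/j$ are $O(1)$.

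\textbf{Main obstacle.} The hardest step is the ballot-type estimate with the right polynomial prefactor $j^{-3/2}$ (rather than $j^{-1/2}$), uniformly for the curved barrier and for $\sigma_k\neq 1/2$ where the increment variances vary. This requires a version of Proposition \ref{cor:Hittingprob} allowing a logarithmic bump and an endpoint at the barrier. I would first prove it for the Gaussian walk via the Brownian bridge reflection bound $\mathbb{P}(\text{bridge stays below barrier})\ll (a+1)(b+1)/j$, and then transfer it by the discretisation argument of Section \ref{sec:largedeviations}.
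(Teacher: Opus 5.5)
You have the right overall architecture: a first-crossing decomposition in $j$, a union bound over the $e^j$ intervals of width $e^{-j}$, a ballot estimate at a reference point, and the expansion of $U_A^{\max}(j)^2/j$ using $A\le t/\log t$. The paper does the same. There is, however, a genuine gap at the step where you decouple the local maximum from the walk.

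\textbf{The gap.} The quantity $\max_{h\in[0,e^{-j})}S_j(\sigma_k+ih)-S_{j-c}(\sigma_k)$ does not involve only the last $O(1)$ scales. It contains $S_{j-c}(\sigma_k+ih)-S_{j-c}(\sigma_k)$, which depends on every prime up to $\exp(e^{j-c})$ and is correlated with the path $(S_\ell(\sigma_k))_{\ell\le j-c}$. You condition on a barrier event of probability roughly $e^{-j}$ and then invoke Lemma \ref{lem:chaining}, but that lemma is an unconditional statement. A bound that holds ``with high probability'' says nothing about the conditional law on such a rare event. After multiplying by $e^j$, the resulting error is uncontrolled. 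H\"older or Cauchy--Schwarz would not rescue this: they lose a power of the ballot probability, which is fatal against the factor $e^j$.

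The same objection applies to transferring the barrier condition from the maximiser to $0$ ``with high probability''. That step, though, is avoidable. If $E_j$ is the first time at which \emph{some} $h$ leaves $[L_A,U_A^{\max}]$, then the left endpoint of every interval satisfies the constraints for all $\ell<j$ automatically. This is exactly what the paper uses.

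\textbf{What the paper does instead.} The missing idea is a \emph{joint} estimate for the barrier event at the reference point and the local oscillation. The paper splits into two cases.
\begin{enumerate}
\item Case $S_j(\sigma_k)>U_A^{\max}(j)$. Condition on $S_{j-1}(\sigma_k)$; the last increment $Y_j$ is independent of the past, so a Chernoff bound on $Y_j$ suffices.
\item Case $\max_{h\in[0,e^{-j})}S_j(\sigma_k+ih)-S_j(\sigma_k)>U_A^{\max}(j)-u-1$ on $E^{(\sigma_k)}_{j,u}$. Chain the oscillation into links $S_j(\sigma_k+iv)-S_j(\sigma_k+iv_*)$. Apply an exponential Chebyshev bound to each link \emph{jointly} with $E^{(\sigma_k)}_{j,u}$. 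This produces $\mathbb{E}\{e^{\lambda_v(S_j(\sigma_k+iv)-S_j(\sigma_k+iv_*))}\}\,\mathbb{Q}_v(E^{(\sigma_k)}_{j,u})$.
\end{enumerate}
Under the tilted measure $\mathbb{Q}_v$ the $Z_p$ remain independent. Hence the discretisation and Gaussian comparison of Section \ref{sec:largedeviations}, together with Proposition \ref{cor:Hittingprob}, still apply. The induced drift is uniformly $O(1)$ by \eqref{eq:meanshift}, because $\lambda_v|v-v_*|e^j=O(1)$. The outcome is the factor $e^{-5(U_A^{\max}(j)-u)}$, holding jointly with the ballot bound.

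\textbf{Why the rate must exceed $2$.} Your displayed ballot bound has $e^{-U_A^{\max}(j)^2/j}$. It should have $e^{-(U_A^{\max}(j)-u)^2/j}\approx e^{-U_A^{\max}(j)^2/j+2u}$. So the overshoot must beat $e^{2u}$ \emph{on the barrier event}, which is precisely what the tilt delivers.

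Relatedly, the step you flag as the main obstacle is already supplied by the paper. The $j^{-3/2}$ ballot estimate with the logarithmic bump and the endpoint factor comes from Proposition \ref{cor:Hittingprob} via Lemma \ref{lem:largedeviations}. The real difficulty is the dependence between the local oscillation and the path, described above.
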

\begin{proof}
    See Section \ref{sec:maximum}.
\end{proof}
The second way in which the proof differs from that of Proposition \ref{prop:2bound} is that if we let
\[
    \mathcal{Z}_{\gamma,\sigma_k}:= \frac{1}{e^t}\int_0^{1}e^{\gamma S_t(\sigma_k+ih)} \mathrm{d}h,
\]
then the trivial bound for $\mathbb{E}\{\mathcal{Z}_{\gamma,\sigma_k}^q\}$ is not sharp in the leading order when $\gamma>2$. Indeed, while Fubini's theorem and a Laplace transform estimate (Lemma \ref{lem:LDestimates}) yield
\begin{equation}\label{eq:trivialsuper}
    \mathbb{E}\big\{\mathcal{Z}_{\gamma,\sigma_k}^q\}\leq \mathbb{E}\big\{ \mathcal{Z}_{\gamma,\sigma_k}\big\}^q\ll e^{qt{(\gamma^2/4-1})},
\end{equation}
for $q<2/\gamma$, we will show that the exponent in the right-hand side can be brought down to $qt(\gamma-2)$. This will replace the trivial bound in an interpolation argument similar to the one in Section \ref{sec:Helson}.

\begin{lemma}\label{lem:crudesupercritical} Let $\gamma\in (2,4), q\in [0,2/\gamma]$. Then uniformly in $k\leq \lfloor \log\log\log x\rfloor$ and $t\leq \log\log x-k$, $\mathbb{E}\big\{\mathcal{Z}_{\gamma,\sigma_k}^q\big\} \ll e^{qt(\gamma-2)}$.
\end{lemma}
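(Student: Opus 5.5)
We aim for the crude bound $\mathbb{E}\{\mathcal{Z}_{\gamma,\sigma_k}^q\}\ll e^{qt(\gamma-2)}$ by the same good-set decomposition as in the proof of Proposition \ref{prop:2bound}, but now with the weak barrier set $G_{A,\sigma_k}$ from \eqref{eq:upperlowerbarrier}. The point is that $\mathcal{Z}_{\gamma,\sigma_k}$ is heavy only because of rare large values of $S_t$; on $G_{A,\sigma_k}$ we have $S_t\leq U_A(t)=A+t$. Write $\mathcal{Z}_{\gamma,\sigma_k}(A)$ for the integral restricted to $h\in G_{A,\sigma_k}$. Using Fubini as in \eqref{eq:LDpartition} and Lemma \ref{lem:largedeviations} (crudely, bounding $A(U_A(t)-V+C)/t\ll A$ for $0\le V\le U_A(t)$), we get
\[
\mathbb{E}\{\mathcal{Z}_{\gamma,\sigma_k}(A)\}\ll e^{-t}\int_{-\infty}^{0}e^{\gamma V}\mathrm{d}V+ \frac{A}{e^{t}\sqrt t}\int_0^{t+A}\gamma e^{\gamma V-V^2/t}\,\mathrm{d}V .
\]
Since $\gamma/2>1$, the exponent $\gamma V-V^2/t$ is increasing on $[0,t+A]$ when $A\le (\gamma/2-1)t$. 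The integral is therefore dominated by the endpoint $V=t+A$, giving $\ll e^{\gamma(t+A)-(t+A)^2/t}\ll e^{(\gamma-1)t+(\gamma-2)A}$. Hence
\[
\mathbb{E}\{\mathcal{Z}_{\gamma,\sigma_k}(A)\}\ll A\,e^{(\gamma-2)t+(\gamma-2)A}.
\]
The factor $1/\sqrt t$ can be dropped, or kept as a bonus, since only $e^{qt(\gamma-2)}$ is needed. This bound is valid for $4C_0<A\le 3\sqrt t$, the range in Lemma \ref{lem:largedeviations}.

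Next, Lemma \ref{lem:goodevent} gives $\mathbb{P}(\exists h\notin G_{A,\sigma_k})\ll e^{-2A}$; its proof via Lemma \ref{lem:chaining} works equally at $\sigma_k$. We decompose along integers $A=n_j=j$ for $j_0\le j\le K$ with $K\asymp\sqrt t$:
\[
\mathcal{Z}_{\gamma,\sigma_k}\le \mathcal{Z}_{\gamma,\sigma_k}(n_{j_0})+\sum_{j_0\le j\le K}\mathbf{1}(\exists h\notin G_{n_j})\,\mathcal{Z}_{\gamma,\sigma_k}(n_{j+1})+\mathbf{1}(\exists h\notin G_{n_{K+1}})\,\mathcal{Z}_{\gamma,\sigma_k}.
\]
Taking $q$-th moments, using subadditivity and Hölder with exponents $1/q,1/(1-q)$, the $j$-th term becomes
\[
\ll e^{-2j(1-q)}\,j^q e^{q(\gamma-2)(t+j)}=e^{q(\gamma-2)t}\,j^q e^{-j(2-\gamma q)} .
\]
Because $q<2/\gamma$, this sums to $\ll_{q} e^{q(\gamma-2)t}$.

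For the tail term we use \eqref{eq:trivialsuper}: it is at most $e^{-2K(1-q)}e^{qt(\gamma^2/4-1)}$. With $K\asymp\sqrt t$ this is not small enough, since we need to absorb $e^{qt(\gamma/2-1)^2}$, which is of order $e^{ct}$. This is the main obstacle. To fix it, the decomposition must run up to $A\asymp t$, with $A$ ranging over a geometric-like grid up to roughly $(\gamma/2-1)t$, where the tail factor $e^{-2A(1-q)}$ beats $e^{qt(\gamma/2-1)^2}$ for suitable constants. Lemma \ref{lem:goodevent} extends without change to $A\le ct$, since the union bound \eqref{eq:maxcomplement} gives $e^{j-U_A(j)^2/j}\le e^{-2A}(\cdots)^{-2}$ for any $A$. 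Lemma \ref{lem:largedeviations} is stated only for $A\le 3\sqrt t$, so for larger $A$ I would not use it. Instead I would bound $\mathbb{E}\{\mathcal{Z}(A)\}$ by the plain Gaussian tail $\mathbb{P}(S_t>V)\ll e^{-V^2/t}$, from Lemma \ref{lem:LDestimates}, truncated at $V\le t+A$. This gives $\ll e^{(\gamma-1)t+(\gamma-2)A}\cdot e^{-t}\sqrt t$, which is again harmless after multiplying by $e^{-2A(1-q)}$ when $q<2/\gamma$ (up to polynomial factors in $t$, absorbed by the exponential decay $e^{-A(2-\gamma q)}$ once $A\gg\log t$).

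Care is needed near $q=2/\gamma$, where $2-\gamma q\to0$. There I would instead apply Hölder from a slightly smaller exponent, or else accept an implied constant uniform thanks to the endpoint bound being trivially $\le \mathbb{E}\{\mathcal{Z}\}^{q}$ restricted to the good set.
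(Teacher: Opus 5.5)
You follow the paper's scheme. Split according to the level $t+A$ of $\max_h S_t(\sigma_k+ih)$, bound the restricted integral by Fubini and the one-point tail truncated at $V\le t+A$, and bound the bad events by the chaining union bound. Then run $A$ up to $J=\lfloor(\gamma/2-1)t\rfloor$ and close with \eqref{eq:trivialsuper}. The paper uses the events $E_A=\{\max_h S_t(\sigma_k+ih)\le t+A\}$ and the density bound of Lemma~\ref{lem:LDestimates} for every $A$. Your detour through $G_{A,\sigma_k}$ and Lemma~\ref{lem:largedeviations} for $A\le 3\sqrt t$ is therefore unnecessary, since you discard its $t^{-3/2}$ gain anyway.

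Keep unit steps $A=j$ throughout, as the paper does, rather than a geometric grid. With $A_{i+1}=\rho A_i$ the H\"older term is $e^{q(\gamma-2)t}e^{-2A_i(1-q)+q(\gamma-2)\rho A_i}$, which grows for $\rho=2$ once $q>1/(\gamma-1)$, and $1/(\gamma-1)<2/\gamma$. The $\asymp t$ unit levels cost nothing, because the terms decay like $e^{-j(2-\gamma q)}$. With this, your argument proves the lemma for each fixed $q<2/\gamma$, with a constant that blows up as $q\to 2/\gamma$.

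The genuine gap is the endpoint $q=2/\gamma$. It is part of the statement and is exactly the case used in the proof of Theorem~\ref{thm:mainbound}, through $\mathbb{E}\{\mathcal{Z}_{\gamma,\sigma_k}^{2/\gamma}\}^{q\gamma/2}$. At $q=2/\gamma$ we have $-2(1-q)+q(\gamma-2)=0$, so every level contributes the same order. Even keeping the factors $t^{-1/2}$ and $e^{-A^2/t}$, the bound for the $j$-th term is of order $t^{-1/\gamma}e^{-j^2/t}e^{(2/\gamma)(\gamma-2)t}$, and the sum is only $\ll t^{1/2-1/\gamma}e^{(2/\gamma)(\gamma-2)t}$. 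Neither proposed fix works. H\"older transfers bounds only from larger exponents to smaller ones. The bound $\mathbb{E}\{\mathcal{Z}_{\gamma,\sigma_k}\}^q$ is the trivial one, too large by $e^{qt(\gamma/2-1)^2}$.

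What is missing is a polynomial gain in $t$ for the bad events. Since $U^{\max}_{A+\frac34\log t}(t)=t+A$, Proposition~\ref{prop:maximum} gives $\mathbb{P}(\max_h S_t(\sigma_k+ih)>t+A)\ll (A+\log t)t^{-3/2}e^{-2A-A^2/t}$ for $A\le t/\log t-\log t$. Using this in place of $e^{-2A}$, the level sum at $q=2/\gamma$ becomes $\ll t^{1/\gamma-1/2}e^{(2/\gamma)(\gamma-2)t}$, uniformly in $q\in[0,2/\gamma]$. Levels $A>t/\log t-\log t$ are negligible because of the factor $e^{-A^2/t}$.

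You cannot borrow the endpoint from the paper's proof. It gets there only by asserting $\mathbb{E}\{\mathcal{Z}_{\gamma,\sigma_k}\mathbf{1}(E_A)\}\ll e^{(\gamma-2)t}$ uniformly in $A\le(\gamma/2-1)t$. That drops exactly the factor $e^{(\gamma-2)A-A^2/t}$ you correctly kept; at $A=(\gamma/2-1)t$ the integral is of order $e^{(\gamma^2/4-1)t}$. With the factor restored, the paper's sum is also $\sum_j e^{-j(2-\gamma q)}$.
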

\begin{proof}
    For any $A\geq 1$, let $E_A:=\{\max_{h\in [0,1]}S_t(\sigma_k+ih)\leq t+A\}$. By a union bound and Lemma \ref{lem:chaining}, 
    \[
\mathbb{P}\big(\lnot E_A\big) \leq e^t \cdot \mathbb{P}\bigg(\max_{h\in[0,e^{-t}]} S_t(\sigma_k+ih)>t+A\bigg) \ll e^{-2A}
    \]
    uniformly in $A\geq 1$. Furthermore, Fubini's theorem and Lemma \ref{eq:density} yield
    \begin{align*}
        \mathbb{E}\Big\{\mathcal{Z}_{\gamma,\sigma_k} \mathbf{1}(E_A)\Big\} &\ll \frac{1}{e^t}\int_{-\infty}^{t+A}e^{\gamma V}\mathbb{P}\big(S_t(\sigma_k)>V\big)\mathrm{d}V \\ &\ll \frac{1}{e^t}\int_{-\infty}^0 e^{\gamma V}\mathrm{d}V+\frac{1}{e^t} \int_{0}^{t+A} \frac{e^{\gamma V-V^2/t}}{\sqrt{t}}\mathrm{d}V\ll e^{(\gamma-2)t}
    \end{align*}
    uniformly in $A\leq (\tfrac{\gamma}{2}-1)t$.
    Using the decomposition
    \[
            \mathcal{Z}_{\gamma,\sigma_k}\leq \mathcal{Z}_{\gamma,\sigma_k}\mathbf{1}(E_2)+\sum_{2\leq j\leq J} \mathbf{1}(\lnot E_j)\big(\mathcal{Z}_{\gamma,\sigma_k}\mathbf{1}(E_{j+1})\big)+\mathbf{1}(\lnot E_{J+1})\mathcal{Z}_{\gamma,\sigma_k},
    \]
    where $J=\lfloor (\gamma/2-1)t\rfloor$,
    we apply Hölder's inequality and the subadditivity of $x\mapsto x^q$ to conclude that
    \begin{align*}
        \mathbb{E}\big\{\mathcal{Z}_{\gamma,\sigma_k}^q\big\} 
            &\leq \mathbb{E}\Big\{\mathcal{Z}_{\gamma,\sigma_k} \mathbf{1}(E_2)\Big\}^q +\sum_{2\leq j\leq J}\mathbb{P}\big(\lnot E_j\big)^{1-q}\mathbb{E}\Big\{\mathcal{Z}_{\gamma,\sigma_k} \mathbf{1}(E_{j+1})\Big\}^q+\mathbb{P}(\lnot E_{J+1})^{1-q}\mathbb{E}\big\{\mathcal{Z}_{\gamma,\sigma_k}\big\}^q\\
        &\ll e^{(\gamma-2)tq}\Big(\sum_{j\leq J} e^{-2j(1-q)}\Big)+ e^{(\gamma-2)tq}e^{-2(\gamma/2-1)t+qt(\gamma^2/4-1)} \ll e^{(\gamma-2)tq}.  \qedhere
    \end{align*}
\end{proof}

Armed with Proposition \ref{prop:maximum} and Lemma \ref{lem:crudesupercritical}, the proof of Theorem \ref{thm:mainbound} is essentially the same as that of Theorem \ref{prop:2bound}.
\begin{proof}[Proof of Theorem \ref{thm:mainbound}]  Assume without loss of generality that $(\gamma/2) q\in [0,1-t^{-3\gamma/4}]$, since the claim otherwise follows from Lemma \ref{lem:crudesupercritical}. Define
\[
    \mathcal{Z}_{\gamma,\sigma_k}(A):=\frac{1}{e^t}\int_0^1 e^{\gamma S_t(\sigma_k+ih)}\mathbf{1}\big(h\in G_{A,\sigma_k}^{\text{max}}\big)\mathrm{d}h.
\]
Proceeding as in Equation \eqref{eq:LDpartition}, Fubini's theorem yields
\begin{align*}
    \mathbb{E}\mathcal{Z}_{\gamma,\sigma_k}(A) &\ll O(e^{-t})+\mathbb{E}\bigg\{ \frac{1}{e^t}\int_0^1\int_0^{U_A^{\text{max}}(t)} \frac{e^{\gamma V-V^2/t}}{\sqrt{t}}\mathbf{1}\big(S_t(\sigma_k+ih)>V, h\in G_{A,\sigma_k}^{\text{max}}\big)\mathrm{d}V\mathrm{d}h\bigg\}
    \\ &\ll O(e^{-t})+ \frac{1}{e^t}\int_0^{U_A^{\text{max}}(t)} \frac{e^{\gamma V-V^2/t}}{\sqrt{t}}\mathbb{P}\big(S_t(\sigma_k)>V, h\in G_{A,\sigma_k}^{\text{max}}\big)\mathrm{d}V.
\end{align*}
By Lemma \ref{lem:largedeviations}, this integral is bounded by a constant times
\begin{align*}
    &\frac{1}{e^t}\int_0^{U_A^{\text{max}}(t)} \frac{e^{\gamma V-V^2/t}}{\sqrt{t}}\frac{A\big(U_A^{\text{max}}(t)-V+C\big)}{t}\mathrm{d}V,
\end{align*}
    which, by the substitution $u=U_A^{\text{max}}(t)-V$, is bounded by
\begin{align}\label{eq:supercritical_ongood}
   &\ll A \frac{e^{\gamma U_A^{\max}(t)-U_A^{\max}(t)^2/t}}{t^{3/2}}\frac{1}{e^t}\int_0^\infty (u+C)e^{(2U_A^{\max}(t)/t-\gamma)u-u^2/t}\mathrm{d}u\nonumber  \\
   &\ll \big(Ae^{(\gamma-2)A}\big)\frac{e^{(\gamma-2)t}}{t^{3\gamma/4}}\cdot \int_{0}^\infty (u+C) e^{(2-\gamma+o(1))u}\mathrm{d}u \ll \big(Ae^{(\gamma-2)A}\big)\frac{e^{(\gamma-2)t}}{t^{3\gamma/4}},
\end{align}
uniformly in $A$.
We are now equipped to bound $\mathbb{E}\big\{\mathcal{Z}_{\gamma,\sigma_k}^q\big\}$. Using the decomposition in \eqref{eq:decomposition} with $n_j=j/(2-\gamma q)$ and $j_0=4C_0+1$, we can bound $
    \mathbb{E}\big\{\mathcal{Z}_{\gamma,\sigma_k}^q\big\}$ by
\begin{align*}
\leq \bigg(\mathbb{E}\big\{\mathcal{Z}_{\gamma,\sigma_k}(n_{j_0})\big\}^q+\sum_{j_0\leq j\leq K}\mathbb{P}\big(\exists h\notin G_{n_j,\sigma_k}^{\max{}}\big)^{1-q} \mathbb{E}\big\{\mathcal{Z}_{\gamma,\sigma_k}(n_{j+1})\big\}^q\bigg)\!+\mathbb{E}\big\{\mathbf{1}\big(\exists h\notin G_{n_{K+1},\sigma_k}^{\text{max}}\big)\mathcal{Z}_{\gamma,\sigma_k}^q\big\}
\end{align*}
for any $K\geq j_0$. We pick $K=\lceil2\log (t^{3q\gamma/4}(2-\gamma q))\rceil+j_0$, so that $4C_0<n_j\leq t/\log t$ for each $j_0\leq j\leq K+1$. Proposition \ref{prop:maximum} and the bound in Equation \eqref{eq:supercritical_ongood} imply that the terms in brackets are
\[
    \ll \frac{e^{tq(\gamma-2)}}{t^{3\gamma q/4}(2-\gamma q)}\sum_{j\geq 1} e^{-j}j^{1-q}(j+1)^q\ll\frac{e^{tq(\gamma-2)}}{t^{3\gamma q/4}(2-\gamma q)}.
\]
For the remaining term, we use Hölder's inequality, Lemma \ref{lem:crudesupercritical} and Proposition \ref{prop:maximum} (noting that $Ae^{-2A}\leq e^{-A}$) to write
\[
    \ll \mathbb{P}\big(\exists h\notin G_{n_{K+1},\sigma_k}^{\text{max}}\big)^{(2-\gamma q)/2}  \mathbb{E}\big\{\mathcal{Z}_{\gamma,\sigma_k}^{2/\gamma}\big\}^{q{\gamma/2}} \ll e^{tq(\gamma-2)} e^{-\frac{(K+1)(2-\gamma q)}{2(2-\gamma q)}},
\]
and the claim follows by definition of $K$.
\end{proof}

\subsection{Proof of Proposition \ref{prop:maximum}}\label{sec:maximum} To begin with, note that by the proof of Lemma \ref{lem:goodevent} and the fact that $U_A(j)<U_A^{\max}(j)$ for $j\leq t/2$,
\begin{align*}
    &\,\,\sum_{j\leq t}\mathbb{P}\Big(\exists h:S_j(\sigma_k+ih)<L_A(j)\Big) \ll \sum_{A/4\leq j\leq t} e^{j-A^2/j-400j+40A} \ll e^{-2A-A^2/t},\\
    &\sum_{j\leq t/2}\mathbb{P}\Big(\exists h:S_j(\sigma_k+ih)>U_A^{\text{max}}(j)\Big) \ll e^{-2A-A^2/t},
\end{align*}
uniformly in $k$ and $t$.
It therefore suffices to show that with the same uniformity,
\begin{equation}\label{eq:3.1main}
        \sum_{t/2< j\leq  t}\mathbb{P}\Big(\max_{h\in[0,1]}S_\ell(\sigma_k+ih)\in [L_A(\ell), U_A^{\text{max}}(\ell)] \,\forall \ell\leq j-1, \max_{h\in [0,1]}S_{j}(\sigma_k+ih)>U_A^{\max}(j) \Big) \ll Ae^{-2A-A^2/t}.
\end{equation}
By translation invariance in law and a union bound, the left-hand side is bounded by
\[
    \sum_{t/2< j\leq  t}e^{j}\cdot \mathbb{P}\Big(S_\ell(\sigma_k)\in [L_A(\ell), U_A^{\text{max}}(\ell)] \,\forall \ell\leq j-1, \max_{h\in [0,e^{-j})}S_{j}(\sigma_k+ih)>U_A^{\max}(j) \Big).
\]
We then split the event in each summand into two: for $\max_{h\in [0,e^{-j})}S_j(\sigma_k+ih)$ to cross $U_A^{\text{max}}(j)$, either $S_j(\sigma_k)>U_A^{\text{max}}(j)$, or $S_j(\sigma_k)\leq U_A^{\text{max}}(j)$ while $\max_{h\in [0,e^{-j})}S_j(\sigma_k+ih)-S_j(\sigma_k
)>U_A^{\text{max}}(j)-S_j(\sigma_k)$.
This dichotomy was used by Arguin, Dubach and Hartung in \cite{ADH} to prove Proposition \ref{prop:maximum} for a Gaussian analogue of $S_j$.  In the first case, we have 
\begin{align}
    &\sum_{t/2<j\leq t} e^{j}\cdot\mathbb{P}\Big(S_\ell(\sigma_k)\in [L_A(\ell), U_A^{\text{max}}(\ell)] \,\forall \ell\leq j-1, S_{j}(\sigma_k)>U_A^{\max}(j) \Big)\label{eq:option1}\\
    &\leq \hspace{-20px}\sum_{\substack{L_A(j-1)\leq u\leq U^{\text{max}}_A(j-1)\\t/2<j\leq t}} \hspace{-20px}e^{j}\cdot\mathbb{P}\Big(S_\ell(\sigma_k)\in [L_A(\ell), U_A^{\text{max}}(\ell)] \,\forall \ell, S_{j-1}(\sigma_k)\in[u,u+1) \Big)\cdot\mathbb{P}\big(Y_j(\sigma_k)>U_A^{\max}(j)-u-1\big)\nonumber
\end{align}
where $Y_j=S_j-S_{j-1}$. On the one hand, a Chernoff bound and the estimate in Equation \eqref{eq:MGFinc} (summing only over $p\in (\exp(e^{j-1}),\exp(e^{j})]$) yield
\[
    \mathbb{P}\big(Y_j(\sigma_k)>U_A^{\max}(j)-u-1\big) \ll e^{-10(U_A^{\text{max}}(j)-u-1)} \ll e^{-10(U_A^{\max}(j-1)-u)}.
\]
On the other hand, Lemma \ref{lem:largedeviations} gives
\[
    \mathbb{P}\Big(S_\ell(\sigma_k)\in [L_A(\ell), U_A^{\text{max}}(\ell)] \,\forall \ell\leq j-1, S_{j-1}(\sigma_k)\in[u,u+1) \Big) \ll \frac{A\big(U_A^{\text{max}}(j-1)-u+C\big)}{j-1}\frac{e^{-u^2/(j-1)}}{\sqrt{j-1}}.
\]
(Note that this estimate holds for $u<0$ as well: in that case, we simply discard the first event and use Lemma \ref{lem:LDestimates} to get the bound $\ll {e^{-u^2/(j-1)}}/{\sqrt{j-1}}$, which is smaller than the right-hand side.) 
Using the change of variables $v=U_A^{\text{max}}(j-1)-u$, it follows that the sum in \eqref{eq:option1} is
\begin{equation}\label{eq:dichotomyIfinal}
    \ll A\sum_{t/2<j\leq t} \frac{e^{j}}{j^{3/2}}\sum_{v\geq 0} (v+C)e^{-10v-(U^{\text{max}}_A(j-1)-v)^2/(j-1)} \ll A\sum_{t/2<j\leq t} \frac{e^{j}}{j^{3/2}}e^{-U_A(j-1)^2/(j-1)}\sum_{v\geq 0}(v+C)e^{-5v}.
\end{equation}
Inserting the estimates $\sum_{v\geq 0}(v+C)e^{-5v}\leq 2C$ and
\[  
    -\frac{U_A(j-1)^2}{j-1}\leq-\frac{A^2}{t}-2A-(j-1)+\frac{3}{2}\log (j-1)-\mathcal{C}\cdot \log\big(1+(t-j+1)\big),
\]
we conclude that
\[
    A\sum_{t/2<j\leq t} \frac{e^{j}}{j^{3/2}}e^{-U_A(j-1)^2/(j-1)}\sum_{v\geq 0}(v+C)e^{-5v} \ll Ae^{-2A-A^2/t} \sum_{j\leq t} (t-j+1)^{-20} \ll Ae^{-2A-A^2/t},
\]
having picked $\mathcal{C}=10^3$.

What remains is to show that
\begin{align}
    &\sum_{t/2<j\leq t} e^{j}\cdot\mathbb{P}\Big(S_\ell(\sigma_k)\in [L_A(\ell), U_A^{\text{max}}(\ell)] \,\forall \ell\leq j, \max_{h\in [0,e^{-j})} S_{j}(\sigma_k+ih)>U_A^{\max}(j) \Big)\label{eq:option2}
\end{align}
satisfies the same bound. To this end, we partition according to $S_j(\sigma_k)\in [u,u+1)$ to get that the above is
\begin{align}\label{eq:Dichotomy2partitioned}
    \leq \sum_{t/2<j\leq t} \sum_{L_A(j)\leq u\leq U_A^{\max} (j)}e^{j}\cdot\mathbb{P}\Big(E_{j,u}^{(\sigma_k)},\max_{h\in [0,e^{-j})} S_{j}(\sigma_k+ih)-S_j(\sigma_k)>U_A^{\max}(j)-u-1 \Big),
\end{align}
where we used the shorthand
\begin{equation}\label{eq:E}
    E_{j,u}^{(\sigma_k)}:=\big\{S_\ell(\sigma_k)\in [L_A(\ell), U_A^{\text{max}}(\ell)] \,\forall \ell\leq j,\, S_j(\sigma_k)\in [u,u+1)\big\}.
\end{equation}
Our goal will be to show that each summand in \eqref{eq:Dichotomy2partitioned} satisfies the bound
\[
\ll e^{j-5(U_A^{\max}(j)-u)}\frac{A\big(U_A^{\text{max}}(j)-u+C\big)}{j}\frac{e^{-u^2/j}}{\sqrt{j}}.
\]
We will assume that $u<U_A^{\max}(j)-1$ without loss of generality, since the desired bound otherwise follows directly by applying Lemma \ref{lem:largedeviations} to $\mathbb{P}(E_{j,u}^{(\sigma_k)})$.

For the remaining $u$, we discretise the maximum over $h\in[0,e^{-j})$ using a same chaining argument similar to the proof of Lemma \ref{lem:chaining}. Following the argument therein from Equation \eqref{eq:qsum} to \eqref{eq:chainingfinal} (ignoring the sum over $q$ and the events $B_q$), we get the bound
\begin{align}
    \leq \sum_{m\geq 0} \sum_{\substack{v\in H_m}} e^j\cdot2\mathbb{P}\Big(E_{j,u}^{(\sigma_k)}, S_{j}(\sigma_k+iv)-S_j(\sigma_k+iv_*)>\frac{U_A^{\max}(j)-u-1}{e(m+1)^2} \Big),\label{eq:discretisedtriple}
\end{align}
where $v_*$ denotes the closest point to $v$ in $H_{m+1}$ which is not equal to $v$. (Should there be two such points, we let $v_*$ denote the smaller of the two.)
By a Chernoff bound, this is
\begin{equation}\label{eq:split}
    \leq \sum_{m\geq 0} \sum_{\substack{v\in H_m}}2\exp\!\Big({j-\lambda_v \frac{U_A^{\text{max}}(j)-u-1}{(e(m+1)^2)}}\Big)\mathbb{E}\Big\{e^{\lambda_v(S_{j}(\sigma_k+iv)-S_j(\sigma_k+iv_*))}\Big\}\cdot\mathbb{Q}_{v}\big(E_{j,u}^{(\sigma_k)}\big),
\end{equation}
where $\lambda_v=100(m+1)^{4}$ for $v\in H_m$, and $\mathbb{Q}_{v}$ is the tilted measure defined through
\[
    \frac{\mathrm{d}\mathbb{Q}_{v}}{\mathrm{d}\mathbb{P}}=\frac{\exp\!{\big({\lambda_v(S_{j}(\sigma_k+iv)-S_j(\sigma_k+iv_*))}}}{\mathbb{E}\Big\{\!\exp\!\big({\lambda_v(S_{j}(\sigma_k+iv)-S_j (\sigma_k+iv_*))}\big)\Big\}}.
\]
To bound the expectation in \eqref{eq:split}, note that $|v-v_*|^{-1}e^{-j}\asymp e^m$, and therefore that there exists a constant $c>0$ such that $\lambda_v\leq c|v-v_*|^{-1}e^{-j}$ for all $v\in \cup_m H_m$.
By the bound in \eqref{eq:O1MGF}, we thus get
\[
\mathbb{E}\!\big\{e^{\lambda_v (S_j(\sigma_k+iv)-S_j(\sigma_k+iv_*))}\big\} \ll 1
\]
uniformly in all parameters, and this can therefore be absorbed into the implied constant. Using the fact that $\# H_m\leq e^{m}+1$, it follows that \eqref{eq:split} is 
\begin{equation}\label{eq:3.8ext}
    \ll e^{j} \sup_{v\in \cup_{m}H_m}\mathbb{Q}_{v}\big(E_{j,u}^{(\sigma_k)}\big)\sum_{m\geq 0} e^{m-10(m+1)^2(U_A^{\text{max}}(j)-u-1)} \ll e^{j-5(U_A^{\max}(j)-u)} \sup_{v\in \cup_{m}H_m}\mathbb{Q}_{v}\big(E_{j,u}^{(\sigma_k)}\big).
\end{equation}
and we therefore need {uniform} estimates for the $\mathbb{Q}_{v}$-probability.

To do so, we make the crucial observation that for any such $v$, the  independence of $(f(p))_{p}$ persists under $\mathbb{Q}_{v}$. Recalling the definition of $E_{j,u}^{(\sigma_k)}$ in \eqref{eq:E}, we can therefore compare $\mathbb{Q}_{v}(E_{j,u}^{(\sigma_k)})$ to a Gaussian counterpart by proceeding as in the proof of Lemma \ref{lem:largedeviations} (up to \eqref{eq:Gaussiancomparisonfinal}), and leveraging the $\mathbb{Q}_{v}$-versions of Lemmas \ref{lem:LDestimates} and \ref{lem:berryesseen}. This yields
\begin{equation}\label{eq:drifted}
        \mathbb{Q}_{v}\big(E_{j,u}^{(\sigma_k)}\big)\ll  \mathbb{P}\Big(\mathcal{G}_j+\mu_j^{(v)}>u-2, \mathcal{G}_\ell+\mu_\ell^{(v)}\leq U_A^{\max}(\ell)+2, \forall \ell\leq j\Big),
\end{equation}
where $(\mathcal{G}_\ell)_{\ell}$ is the Gaussian random walk from Section \ref{sec:largedeviations}, and
\[
    \mu_\ell^{(v)}:=\sum_{s\leq \ell}\nu_s^{(v)},\quad \nu_s^{(v)}:= \mathbb{E}_{\mathbb{Q}_{v}}\big\{S_s(\sigma_k)-S_{s-1}(\sigma_k)\big\}.
\]
In other words, $\mathbb{Q}_{v}$ has the effect of adding a \textit{drift} to the random walk $\mathcal{G}_\ell$. However, since 
\[
   \sup_{k}\, \sup_{j\leq t} \sup_{v\in \cup_m H_m}|\mu_{j}^{(v)}|<D
\]
for some absolute constant $D>0$ by \eqref{eq:meanshift}, this will essentially have no effect on the final bound. Indeed, the right-hand side in \eqref{eq:drifted} is bounded by
\begin{align*}
  \mathbb{P}\Big(\mathcal{G}_j>u-2-\mu_j^{(v)}, \mathcal{G}_\ell\leq U_A^{\max}(\ell)-\mu_\ell^{(v)}, \forall \ell\leq j\Big) \leq \mathbb{P}\Big(\mathcal{G}_j>u-D', \mathcal{G}_j\leq U_A^{\max}(j)+D', \forall \ell\leq j\Big),
\end{align*}
for some $D'>0$, which we can bound by
\begin{align*}
        \ll \frac{A\big(U_A^{\text{max}}(j)-u+C'\big)}{j}\frac{e^{-u^2/j}}{\sqrt{j}}
\end{align*}
using Proposition \ref{cor:Hittingprob}, for some new constant $C'$ depending on $D'$. By \eqref{eq:3.8ext}, we conclude that \eqref{eq:Dichotomy2partitioned} is
 \[
    \ll \sum_{t/2<j\leq t} e^{j-5(U_A^{\max}(j)-u)}\frac{A\big(U_A^{\text{max}}(j)-u+C'\big)}{j}\frac{e^{-u^2/j}}{\sqrt{j}}.
 \]
which can be estimated as in \eqref{eq:dichotomyIfinal}.\qedhere \qed

\subsection{Corollaries} We end this section by showing how Corollaries \ref{cor:maximum} and \ref{cor:level} follow from the argument in the previous section. To get Corollary \ref{cor:maximum}, note that there exists a constant $C>0$ such that
\begin{equation}\label{eq:maxproof}
        \mathbb{P}\bigg(\max_{h\in [0,1] }|F_x(1/2+ih)|>\frac{\log x}{(\log\log x)^{3/4}}e^y\bigg)\leq \mathbb{P}\bigg(\max_{h\in [0,1]}S_{t}(1/2+ih)>t-\frac{3}{4}\log t+C+y\bigg)
\end{equation}
by \eqref{eq:trimming}, where $t=t(x)=\log\log x$. By Proposition \ref{prop:maximum}, this probability is 
\[
    \ll ye^{-2y-y^2/t}+ \mathbb{P}\bigg(\max_{h\in[0,1]}S_{t}(1/2+ih)>t-\frac{3}{4}\log t+C+y \cap \big\{[0,1]\subseteq  G_{y,1/2}^{\mathrm{max}}\big\}\bigg),
\]
which is $\ll ye^{-2y-y^2/t}$ by the same argument used to bound \eqref{eq:3.1main}.

For Corollary \ref{cor:level}, we let $E$ denote the event therein and bound the probability of its complement by
\[
    \mathbb{P}\big(\exists h\notin  G_{(C+\log A), 1/2}^{\max}\big)+\mathbb{P}\Big((\lnot E)\cap \big\{[0,1]\subseteq G_{(C+\log A), 1/2}^{\max}\big\}\Big),
\]
for the same constant $C$ as in \eqref{eq:maxproof}.
The first term is $\ll (\log A)/A$ by Proposition \ref{prop:maximum}. For the second, Markov's inequality and Fubini's theorem yield
\[
    \leq \frac{\mathbb{P}\Big(S_t(1/2)>t-\frac{3}{4}\log t+C+y, 0\in G_{(C+\log A),1/2}^{\max}\Big)}{(\log x)^{-1}A|\log A-y|e^{-2y-y^2/\log\log x}},
\]
which is $0$ when $y>\log A$, and $\ll (\log A)/A$ by Lemma \ref{lem:largedeviations} otherwise.

\section{Proof of Theorem \ref{thm:pseudomoments}}\label{sec:pseudomoments}

Building on the results in the previous section, we now prove Theorem \ref{thm:pseudomoments}. In this section, we let $\sigma_k:=1/2-2(k+1)/\log x$, and begin by using Proposition 6.6 in \cite{Gerspach}, by which
\[
    \Psi_{2q, \alpha}(x)\ll (\log x)^{q\alpha^2}+\frac{1}{(\log x)^q}\sum_{k\leq K} \mathbb{E}\bigg\{\bigg(\int_{0}^\infty\frac{|F_{x^{e^{-(k+1)}}}(\sigma_k+ih)|^{2\alpha}}{|2(k+1)/\log x+ih|^{2}}\mathrm{d}h\bigg)^q\bigg\}
\]
for $K=\lfloor \log\log\log x\rfloor$. Noting that $\alpha^2q <2q(\alpha-1)$ when $\alpha\in (1,2)$ and $q\in (0,2(\alpha-1)/\alpha^2)$, it suffices to show that the sum over $k$ satisfies the claimed bound.

Let $F^{(k)}:= F_{x^{e^{-(k+1)}}}$ for simplicity. By subadditivity of $x\mapsto x^q$, this sum is
\begin{align}\label{eq:first_pseudomoments}
    \leq \sum_{k\leq K}\frac{1}{(\log x)^q}\mathbb{E}\bigg\{\bigg(\int_{0}^1\frac{|F^{(k)}(\sigma_k+ih)|^{2\alpha}}{\big|\tfrac{2(k+1)}{\log x}+ih\big|^{2}}\mathrm{d}h\bigg)^q\bigg\} +  \sum_{k\leq K}\frac{1}{(\log x)^q}\mathbb{E}\bigg\{\bigg(\int_{1}^\infty{\frac{|F^{(k)}(\sigma_k+ih)|^{2\alpha}}{h^2}}\mathrm{d}h\bigg)^q\bigg\},
\end{align}
and Hölder's inequality and the translation invariance in law of $(F(\sigma_k+it), t\in [0,1])$ yield
\begin{align*}
    \mathbb{E}\bigg\{\bigg(\frac{1}{\log x}\int_{1}^\infty{\frac{|F^{(k)}(\sigma_k+ih)|^{2\alpha}}{h^2}}\mathrm{d}h\bigg)^{q}\bigg\} &\leq \mathbb{E}\bigg\{\bigg(\frac{1}{\log x}\int_{1}^\infty{\frac{|F^{(k)}(\sigma_k+ih)|^{2\alpha}}{h^2}}\mathrm{d}h\bigg)^{q^*}\bigg\}^{q/q^*}\\&\leq \bigg(\sum_{n\geq 1} \frac{1}{n^{2q^*}}\bigg)\mathbb{E}\bigg\{\bigg(\frac{1}{\log x}\int_{0}^1{{|F^{(k)}(\sigma_k+ih)|^{2\alpha}}}\mathrm{d}h\bigg)^{q^*}\bigg\}^{q/q^*}
\end{align*}
for any $q^*\in (1/2,1/\alpha)$ and $k\leq K$. The sum over $n$ being finite, we conclude using Theorem \ref{thm:mainbound} that\footnote{Note that the shift from $1/2$ here is by $2(k+1)/\log x$ rather than $(k+1)/\log x$, but one straightforwardly checks that the proof of Theorem \ref{thm:mainbound} remains valid with this choice.}
\[
    \sum_{k\leq K}\frac{1}{(\log x)^q}\mathbb{E}\bigg\{\bigg(\int_{1}^\infty{\frac{|F^{(k)}(\sigma_k+ih)|^{2\alpha}}{h^2}}\mathrm{d}h\bigg)^q\bigg\}\ll \sum_{k\leq K}\bigg(\frac{e^{-kq^*}(\log x)^{2(\alpha-1)q^*}}{(\log \log x)^{3q^*\alpha/2}}\bigg)^{q/q^*} = \frac{(\log x)^{2(\alpha-1)q}}{(\log \log x)^{3q\alpha/2}}.
\]

To handle the first sum in \eqref{eq:first_pseudomoments}, we decompose the range of integration into $e$-adic intervals and once again use subadditivity of $x\mapsto x^q$. This yields
\begin{align*}
    \sum_{k\leq K}\frac{1}{(\log x)^q}\mathbb{E}\bigg\{\bigg(\int_{0}^1\frac{|F^{(k)}(\sigma_k+ih)|^{2\alpha}}{|\tfrac{2(k+1)}{\log x}+ih|^{2}}\mathrm{d}h\bigg)^q\bigg\} \ll \sum_{k\leq K} \sum_{j\leq J} \frac{T_{j-1}^{-2q}}{(\log x)^q} \mathbb{E}\bigg\{\bigg(\int_{T_{j-1}}^{T_{j}}|F^{(k)}(\sigma_k+ih)|^{2\alpha}\mathrm{d}h\bigg)^q\bigg\}
\end{align*}
where $T_j=(e^{2(k+1)}/\log x)e^{j}$ for each $j\geq 1$, and $T_{-1}:=0$. The sum is taken up to $J_k=J(k,x)$, defined as the smallest integer for which $e^{J+2(k+1)}/\log x\geq 1$. 

To bound each summand, we make the observation that on $[T_{j-1}, T_j]$, the contribution to $|F^{(k)}(\sigma_k+ih)|^{2\alpha}$ coming from primes up to $x_j=e^{1/T_j}$ is roughly constant over the range of integration; it should approximately equal $|F_{x_j}(\sigma_j)|^{2\alpha q}$, as suggested by Lemma \ref{lem:chaining}. To leverage this fact, we introduce the a family of tilted measures $(\mathbb{P}_j)_{j\leq J_k}$, defined through 
\[
    \frac{\mathrm{d}\mathbb{P}_j}{\mathrm{d}\mathbb{P}}=\frac{|F_{x_j}(\sigma_k)|^{2\alpha q}}{\mathbb{E}\{|F_{x_j}(\sigma_k)|^{2\alpha q}\}}.
\]
 For each $j\leq J_k$, 
\begin{align*}
    \mathbb{E}\bigg\{\bigg(\int_{T_{j-1}}^{T_{j}}|F^{(k)}(\sigma_k+ih)|^{2\alpha}\mathrm{d}h\bigg)^q\bigg\} = \mathbb{E}\Big\{\big|F_{x_j}(\sigma_k)\big|^{2\alpha q}\Big\}\cdot \mathbb{E}_{\mathbb{P}_j}\bigg\{\bigg(\int_{T_{j-1}}^{T_{j}}\frac{|F^{(k)}(\sigma_k+ih)|^{2\alpha}}{|F_{x_j}(\sigma_k+ih)|^{2\alpha}}\mathcal{E}_T(h)^{2\alpha}\mathrm{d}h\bigg)^q\bigg\}
\end{align*}
where $\mathcal{E}_T(h):=|F_{x_j}(\sigma_k+ih)|/|F_{x_j}(\sigma_k)|$ can be seen as an error term. We can then condition on the $\sigma$-algebra generated by $(f(p))_{x_j<p\leq x^{e^{-(k+1)}}}$ and use Jensen's inequality to get the bound
\[
    \ll \mathbb{E}\Big\{\big|F_{x_j}(\sigma_k)\big|^{2\alpha q}\Big\}\cdot \mathbb{E}\bigg\{\bigg(\int_{T_{j-1}}^{T_{j}}\frac{|F^{(k)}(\sigma_k+ih)|^{2\alpha}}{|F_{x_j}(\sigma_k+ih)|^{2\alpha}}\mathbb{E}_{\mathbb{P}_{j}}\big\{\mathcal{E}_T(h)^{2\alpha}\big\}\mathrm{d}h\bigg)^q\bigg\},
\]
noting that $(|F^{(k)}(\sigma_k+ih)|/|F_{x_j}(\sigma_k+ih)|)_{h\in [T_{j-1},T_j]}$ is measurable with respect to said $\sigma$-algebra, and that its law under $\mathbb{P}_j$ is the same as under $\mathbb{P}$ by said independence. By the moment bound in \eqref{eq:ERRORBOUND}, 
\[
    \sup_{j\leq J_k}\sup_{h<T_j} \mathbb{E}_{\mathbb{P}_j}\big\{\mathcal{E}_T(h)^{2\alpha}\big\}\ll 1,
\]
Since $\mathbb{E}\{|F_{x_j}(\sigma_k)|^{2\alpha q}\}\ll T_j^{-(\alpha q)^2}$ uniformly in $j$ and $k$ by Lemma \ref{lem:LDestimates}, it follows that
\begin{align*}
    \sum_{\substack{k\leq K\\ j\leq J_k}} \frac{T_{j-1}^{-2q}}{(\log x)^q} \mathbb{E}\bigg\{\bigg(\int_{T_{j-1}}^{T_{j}}|F^{(k)}(\sigma_k+ih)|^{2\alpha}\mathrm{d}h\bigg)^q\bigg\} \ll  \sum_{\substack{k\leq K\\ j\leq J_k}} \frac{T_{j}^{-2q-(\alpha q)^2}}{(\log x)^q}\mathbb{E}\bigg\{\bigg(\int_{T_{j-1}}^{T_{j}}\frac{|F^{(k)}(\sigma_k+ih)|^{2\alpha}}{|F_{x_j}(\sigma_k+ih)|^{2\alpha}}\mathrm{d}h\bigg)^q\bigg\} 
\end{align*}
We're left with the task of bounding
\begin{align}\label{eq:shiftedGMC}
        \mathbb{E}\bigg\{\bigg(\int_{T_{j-1}}^{T_{j}}\frac{|F^{(k)}(\sigma_k+ih)|^{2\alpha}}{|F_{x_j}(\sigma_k+ih)|^{2\alpha}}\mathrm{d}h\bigg)^q\bigg\} = \mathbb{E}\bigg\{\bigg(\int_{0}^{(1-1/e)T_{j}}\frac{|F^{(k)}(\sigma_k+ih)|^{2\alpha}}{|F_{x_j}(\sigma_k+ih)|^{2\alpha}}\mathrm{d}h\bigg)^q\bigg\}.
\end{align}
To that end, we define the \textit{shifted} field
\[
        {S}_{k}^{(j)}(\sigma_k+ih)=S_{k+t_j}\big(\sigma_k+ie^{t_j}h\big)-S_{t_j}\big(\sigma_k+ie^{t_j}h\big),\quad t_j:=\log\log x_j,
\]
where $S_k(s)$ is defined as in \eqref{eq:walk}. By discarding higher order terms in the expansion of $\log |F^{(k)}/F_{x_j}|$ (cf.\! Equation \eqref{eq:trimming} and using the  change of variables $h\mapsto h/((1-1/e)T_j)$, the expectation in \eqref{eq:shiftedGMC} is  
\[
    \leq T_{j}^q\cdot \mathbb{E}\bigg\{\bigg(\int_{0}^{1}e^{2\alpha S_{t-t_j}^{(j)}(\sigma_k+ih)}\mathrm{d}h\bigg)^q\bigg\}, \quad \text{ where } t=\log\log x-(k+1).
\]
 This can then be studied exactly as in the proof of Theorem \ref{thm:mainbound}, replacing every occurence of $S$ with $S^{(j)}$. Indeed, $S_{t-t_j}^{(j)}$ is now a sum of $t-t_j$ independent increments with variances
\[
    \sum_{\exp(e^{t_j+\ell-1})< p \leq \exp(e^{t_j+\ell})}\frac{1}{2p^{2\sigma}}+\frac{1}{8p^{4\sigma}},\quad \ell\leq t-t_j,
\]
and the $t_j$-shift in the range of $\log\log p$ in this sum only improves the error terms in the estimates in the appendix. We conclude that
\[
    T_j^q\cdot \mathbb{E}\bigg\{\bigg(\int_{0}^{1}e^{2\alpha S_{t-t_j}^{(j)}(\sigma_k+ih)}\mathrm{d}h\bigg)^q\bigg\} \ll  T_j^{q}(e^{jq})\cdot\frac{e^{j\cdot2q(\alpha-1)}}{j^{q(3\alpha/2)}}, 
\]
{uniformly in $j$ and $k$}, and in turn that 
\begin{align*}
    \Psi_{2\alpha, q}(x)&\ll (\log x)^{q\alpha^2}+\sum_{k\leq K} \sum_{j\leq J_k} e^{-kq}T_{j}^{-(\alpha q)^2}  \frac{e^{j\cdot 2q(\alpha-1)}}{j^{q(3\alpha/2)}}\ll (\log x)^{q\alpha^2}+\sum_{k} e^{-Ak}\frac{e^{J_k\cdot 2q(\alpha-1)}}{J_k^{3q\alpha/2}}
\end{align*}
where $A>0$ is a constant. The theorem then follows by recalling that $J_k=\lceil \log\log x-2(k+1)\rceil$.
\appendix

\section{Gaussian comparison}
Recall that $(Z_p)_p$ is a collection of {i.i.d.} Steinhaus random variables. It will be convenient to introduce
\[
    X_p^{(\sigma)}(h):=\Re\Big(\frac{Z_p}{p^{\sigma+ih}}+\frac{Z_p^2}{p^{2(\sigma+ih)}}\Big), \quad I_j:=(\exp(e^{j-1}),\exp(e^j)],
\]
so that $S_j(\sigma+ih):=\sum_{p\in I_j}X_p^{(\sigma)}(h).$ We also let $V_j(\sigma):= \sum_{p\in I_j} \frac{1}{2p^{2\sigma}}+\frac{1}{8p^{4\sigma}}.$
\begin{lemma}[Prime number theorem estimates]\label{lem:PNT}
Uniformly in $b>a>2$, there exists a $c>0$ for which
    \begin{equation}
        \sum_{a<p\leq b} \frac{1}{p}= \log\log b-\log\log a+O\big(e^{-c\sqrt{\log a}}\big),
    \end{equation}
and uniformly in $\sigma\in (0,1/2)$, 
\begin{equation}
    \sum_{a<p\leq b} \frac{1}{p^{2\sigma}} = \mathrm{Ei}\big((1-2\sigma)\log b\big)-\mathrm{Ei}\big((1-2\sigma)\log a\big)+O\big(e^{-c\sqrt{\log a}}\big)
\end{equation}
where $\mathrm{Ei}(x):=\int_{-\infty}^x(e^{s}/s)\mathrm{d}s$.
We also have that $\sum_{a\leq p\leq b}\frac{(\log p)^m}{p}=O\big((\log b)^m\big)$ for any $1\leq a\leq b$.
\end{lemma}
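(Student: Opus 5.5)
The first estimate is a direct consequence of the prime number theorem with the classical de la Vallée Poussin error term, $\pi(u)=\mathrm{Li}(u)+O\big(ue^{-2c\sqrt{\log u}}\big)$. I will write each sum over primes as a Stieltjes integral against $d\pi(u)$ and split it into main part and error, $d\pi(u)=du/\log u+dR(u)$ with $R(u)=O(ue^{-2c\sqrt{\log u}})$. For the main part, $\int_a^b \frac{du}{u\log u}=\log\log b-\log\log a$ exactly. For the error part I will integrate by parts:
\[
\int_a^b \frac{dR(u)}{u}=\Big[\frac{R(u)}{u}\Big]_a^b+\int_a^b\frac{R(u)}{u^2}\,du .
\]
The boundary terms are $O(e^{-2c\sqrt{\log a}})$. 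In the integral I substitute $u=e^v$, giving $\int_{\log a}^\infty e^{-2c\sqrt v}\,dv\ll (1+\sqrt{\log a})e^{-2c\sqrt{\log a}}\ll e^{-c\sqrt{\log a}}$. This loss of a polynomial factor is exactly why $2c$ becomes $c$ in the statement.

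For the second estimate I run the same argument with weight $u^{-2\sigma}$. The main term is $\int_a^b \frac{u^{-2\sigma}}{\log u}\,du$; substituting $u=e^{w}$ and then $s=(1-2\sigma)w$ turns it into
\[
\int_{(1-2\sigma)\log a}^{(1-2\sigma)\log b}\frac{e^s}{s}\,ds=\mathrm{Ei}\big((1-2\sigma)\log b\big)-\mathrm{Ei}\big((1-2\sigma)\log a\big).
\]
The error part is again handled by parts. The boundary terms are $R(u)u^{-2\sigma}\ll u^{1-2\sigma}e^{-2c\sqrt{\log u}}$, and the integral is $\int_a^b R(u)\,2\sigma u^{-2\sigma-1}\,du$. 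Here lies the main obstacle: when $\sigma<1/2$ the factor $u^{1-2\sigma}$ grows, so the error is not uniformly $O(e^{-c\sqrt{\log a}})$ for arbitrary $b$. Literal uniformity in all $\sigma\in(0,1/2)$ and $b$ is therefore not attainable from this argument. I would instead prove the bound uniformly whenever $(1-2\sigma)\log b\ll 1$, which is exactly the regime used in the paper ($\sigma=\sigma_k$, $b\le x$, $k\le\log\log\log x$). There $u^{1-2\sigma}=e^{(1-2\sigma)\log u}\le e^{O(1)}$ for $u\le b$, so the error is bounded exactly as in the first case. If needed, I would state this restriction explicitly, or alternatively accept an error of the form $O\big(b^{1-2\sigma}e^{-c\sqrt{\log a}}\big)$, which is harmless in every application.

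For the final claim I use Mertens' estimate $\sum_{p\le u}\frac{\log p}{p}=\log u+O(1)$ and partial summation:
\[
\sum_{a\le p\le b}\frac{(\log p)^m}{p}\le(\log b)^{m-1}\sum_{p\le b}\frac{\log p}{p}\ll(\log b)^m .
\]
This also covers small $a$, and since $b\ge 2$ whenever the sum is nonempty, the $O(1)$ term is absorbed.
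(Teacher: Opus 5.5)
Your argument matches the paper's, whose proof just invokes the prime number theorem with the de la Vall\'ee Poussin error term (Theorem 6.9 of Montgomery--Vaughan) and integrates by parts. Your caveat about uniformity in $\sigma$ is correct, and in fact the second estimate as stated is false: for $\sigma\le 1/\log(2a)$ the left side jumps by $p^{-2\sigma}\ge e^{-2}$ at the first prime $p\in(a,2a]$, while the $\mathrm{Ei}$ expression is continuous in $b$. Your restricted version with $(1-2\sigma)\log b\ll 1$ is what the paper actually uses; the reason it holds there is $\log b\le e^{1-k}\log x$, so $(1-2\sigma_k)\log b\le 2ke^{1-k}\le 2$, whereas your stated justification $b\le x$ would only give $(1-2\sigma_k)\log b\le 2k$.
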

\begin{proof}
    This follows straightforwardly from Theorem 6.9 in \cite{MontgomeryVaughan} using integration by parts.
\end{proof}

\begin{lemma}[Large deviation estimates]\label{lem:LDestimates} Let $C>0$ be an arbitrary constant. Then uniformly in all large $x$, $0\leq k\leq \log\log\log x$, $\sigma=1/2-k/\log x$, $1\leq j\leq \log\log x-k+1$, $|\alpha|\leq C$, and ${|v|\leq Cj}$,
\begin{equation}\label{eq:density}
        \mathbb{E}\big\{e^{\alpha S_j(\sigma)}\big\}\ll e^{\frac{\alpha^2}{4}j},\quad \mathrm{and} \quad \mathbb{P}\big(S_{j}(\sigma)\in [v,v+1)\big) \ll \frac{e^{-v^2/j}}{\sqrt{j}}.
\end{equation}
Furthermore, the same bounds hold under the measures $\mathbb{Q}=\mathbb{Q}_{h_1,h_2,\sigma,\beta}$ defined through
\[
    \frac{\mathrm{d}\mathbb{Q}}{\mathrm{d}\mathbb{P}}=\frac{\exp\big(\beta(S_j(\sigma+ih_1)-S_j(\sigma+ih_2))\big)}{\mathbb{E}\big\{\!\exp\big(\beta(S_j(\sigma+ih_1)-S_j(\sigma+ih_2))\big)\!\big\}}
\]
uniformly in $0\leq \beta\leq C|h_1-h_2|^{-1}e^{-j}$ and $h_1,h_2\in [-e^{-j-1},e^{-j-1}]$.
\end{lemma}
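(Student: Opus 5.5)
The plan is to prove both bounds in \eqref{eq:density} by computing the moment generating function of $S_j(\sigma)$ prime by prime, using independence. The Gaussian tail then follows from a Chernoff bound combined with an exponential tilt.

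\textbf{The MGF bound.} Write $S_j(\sigma)=\sum_{C_0<p\le \exp(e^j)}X_p$; the $X_p$ are independent, bounded and centered. For each $p$, a Taylor expansion of $\mathbb{E}\{e^{\alpha X_p}\}$ gives
\[
\log\mathbb{E}\{e^{\alpha X_p}\} = \frac{\alpha^2}{2}\mathbb{E}\{X_p^2\} + O\Big(\frac{|\alpha|^3}{p^{3\sigma}}\Big),
\]
since $\mathbb{E}\{X_p\}=0$ and the third moment is $O(p^{-3\sigma})$. We have $\sigma\ge 1/2-1/\log\log x \cdot(\log\log\log x)$, so $p^{-3\sigma}\ll p^{-3/2+o(1)}$ for $p\le \exp(e^j)\le x$, and $\sum_p p^{-3\sigma}$ converges. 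Next, $\mathbb{E}\{X_p^2\}=\frac{1}{2p^{2\sigma}}+\frac{1}{8p^{4\sigma}}$. By Lemma \ref{lem:PNT},
\[
\sum_{p\le \exp(e^j)}\frac{1}{2p^{2\sigma}}=\frac{j}{2}+O(1),
\]
uniformly over the stated range. This uses $p^{1-2\sigma}=\exp(2k\log p/\log x)\le e^{2k e^{j}/\log x}\ll 1$, because $j\le \log\log x-k+1$ gives $ke^j/\log x\le k e^{1-k}$, which is bounded. Summing, $\log\mathbb{E}\{e^{\alpha S_j}\}\le \alpha^2 j/4+O_C(1)$.

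\textbf{The tail bound.} Set $\lambda=2v/j$, so $|\lambda|\le 2C$, and tilt by $e^{\lambda S_j}$. Then
\[
\mathbb{P}(S_j\in[v,v+1))\le e^{-\lambda v+\lambda^2 j/4+O(1)}\,\widetilde{\mathbb{P}}(S_j\in[v,v+1)) \ll e^{-v^2/j}\,\widetilde{\mathbb{P}}(S_j\in[v,v+1)).
\]
It remains to show $\widetilde{\mathbb{P}}(S_j\in I)\ll j^{-1/2}$ for intervals $I$ of length one. Under the tilt the summands are still independent, and the total variance is $\asymp j$. I would obtain a local/concentration bound via Esseen's inequality: $\sup_I\widetilde{\mathbb{P}}(S_j\in I)\ll\int_{|\xi|\le1}|\widetilde{\varphi}(\xi)|\,d\xi$. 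Each tilted factor satisfies $|\widetilde{\varphi}_p(\xi)|\le \exp(-c\,\xi^2 p^{-2\sigma})$ for $|\xi|\le 1$, since the tilt is a bounded perturbation of the uniform law of $Z_p$. The product is therefore at most $e^{-c\xi^2 j}$, and integrating gives $j^{-1/2}$. This per-factor decay is the \textbf{main technical step}. Steinhaus variables make it clean, because $\Re(Z_p p^{-\sigma-ih})$ has an explicit arcsine-type law, so the small-$\xi$ expansion $1-\xi^2\mathrm{Var}/2+O(|\xi|^3p^{-3\sigma})$ suffices for large $p$. Small primes can simply be bounded by $1$.

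\textbf{The tilted measures $\mathbb{Q}$.} Under $\mathbb{Q}$ the $Z_p$ remain independent, with density proportional to $\exp(\beta(X_p(h_1)-X_p(h_2)))$. Since $|X_p(h_1)-X_p(h_2)|\ll p^{-\sigma}|h_1-h_2|\log p$ and $\beta|h_1-h_2|\le Ce^{-j}$, the per-prime tilt exponent is $\ll p^{-1/2}\log p\, e^{-j}\ll p^{-1/2}$ for $p\le \exp(e^j)$. This bounded perturbation changes means by $O(e^{-j}\log p/p^{2\sigma})$ and variances by $O(\log p\, e^{-j}/p^{3/2})$. Summing with the last claim of Lemma \ref{lem:PNT} ($\sum_{p\le \exp(e^j)} \log p/p\ll e^j$), the total mean shift is $O(1)$ and the variance changes by $O(1)$. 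Repeating both arguments under $\mathbb{Q}$ then gives $\mathbb{E}_{\mathbb{Q}}\{e^{\alpha S_j}\}\ll e^{\alpha^2j/4}$ and the same density bound, with the $O(1)$ mean shift absorbed by $e^{-(v-O(1))^2/j}\ll e^{-v^2/j}$ (using $|v|\le Cj$).
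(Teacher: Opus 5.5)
Your proposal is correct and follows essentially the same route as the paper: a per-prime Taylor expansion of the moment generating function combined with Lemma \ref{lem:PNT}, an exponential tilt at $\lambda=2v/j$ reducing the tail bound to a $j^{-1/2}$ local bound under the tilted measure, and $O(1)$ mean and variance perturbation estimates that let you repeat both steps under $\mathbb{Q}$. The only deviation is in the local bound, which you obtain from Esseen's concentration inequality and per-factor characteristic-function decay, whereas the paper applies the Berry--Esseen theorem after checking that third moments are summable and that the tilted variance is $\asymp j$; both rest on the same inputs.
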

\begin{proof}
Since $|\alpha X_p^{(\sigma)}(0)|$ is bounded uniformly in $p\geq 2$ by our assumption on $\alpha$,  we can write
\[
    \log \mathbb{E}\big\{e^{\alpha S_j(\sigma)}\big\}=\sum_{p\leq \exp(e^j)} \log \mathbb{E}\big\{e^{\alpha X_p^{(\sigma)}(0)}\big\}=\sum_{p\leq \exp(e^j)} \log \Big(1+\frac{\alpha^2}{4p^{2\sigma}}+O\big(\alpha^3 p^{-3\sigma}\big)\Big)
\]
by Taylor expanding the exponential. Using Lemma \ref{lem:PNT} and the expansions $\log(1+x)=x+O(x^2)$ and $\mathrm{Ei}(x)=\gamma+\ln|x|+x+O(x^2)$,  
\begin{equation}\label{eq:MGFinc}
     \log \mathbb{E}\big\{e^{\alpha S_j(\sigma)}\big\}=\frac{\alpha^2}{4}\sum_{p\leq \exp(e^j)}\frac{1}{p^{2\sigma}}+O(\alpha^6p^{-3\sigma})\leq \frac{\alpha^2}{4}j+C',
\end{equation}
for some constant $C'$ (depending on $C$). It follows that $\mathbb{E}\{e^{\alpha S_j(\sigma)}\}\ll e^{(\alpha^2/4)j}$ for any $|\alpha|\leq C$. 

To estimate the probability in \eqref{eq:density}, we rewrite it as
\begin{equation}\label{eq:LD1}
    \mathbb{E}\big\{e^{\alpha S_j(\sigma)}\big\} \mathbb{E}_{\tilde{\mathbb{\mathbb{P}}}}\big\{e^{-\alpha S_j(\sigma)}\mathbf{1}\big(S_{j}(\sigma)\in [v,v+1)\big)\big\} \leq \mathbb{E}\big\{e^{\alpha S_j(\sigma)}e^{-\alpha v+\mathbf{1}(v<0)}\big\}\tilde{\mathbb{\mathbb{P}}}\big(S_{j}(\sigma)\in [v,v+1)\big)\big\}
\end{equation}
for $\alpha=2v/j$, where $\tilde{\mathbb{\mathbb{P}}}$ is the measure given by $\mathrm{d}\tilde{\mathbb{\mathbb{P}}}/{\mathrm{d}\mathbb{P}}={e^{\alpha S_{j}(\sigma)}}/{\mathbb{E}\{e^{\alpha S_{j}(\sigma)}}\}$. The expectation is $\ll e^{-v^2/j}$ by the earlier estimate $\mathbb{E}\{e^{\alpha S_j(\sigma)}\}\ll e^{(\alpha^2/4)j}$. To estimate the remaining probability, note that deterministically,
\[  
    \mathbb{E}_{\tilde{\mathbb{P}}}\Big\{\Big(S_j(\sigma)-\mathbb{E}_{\tilde{\mathbb{P}}}\big\{S_j(\sigma)\big\}\Big)^3\Big\}\leq C'\sum_{p} p^{-3\sigma} <\infty
\]
for some constant $C'>0$. Furthermore, there exists a constant $C_0$ depending only on $C$ such that 
\[  
    \frac{e^{\alpha X_p^{(\sigma)}(0)}}{\mathbb{E}\big\{e^{\alpha X_p^{(\sigma)}(0)}\big\}}=1+\alpha X_p^{(\sigma)}(0)+\frac{\alpha^2}{2}\Big(X_p^{(\sigma)}(0)^2-\mathbb{E}\big\{X_p^{(\sigma)}(0)^2\big\}\Big)+O(\alpha^3p^{-3\sigma}\big)
\]
uniformly for $p>C_0$, and we may assume that $C_0<\exp(e^j)$ without loss of generality (the desired bound otherwise follows by bounding the probability in \eqref{eq:LD1} by $1$). It follows that
\begin{align*}
    \mathrm{Var}_{\tilde{\mathbb{P}}}\big(S_j(\sigma)\big) &= O_C(1)+ \sum_{C_0<p\leq \exp(e^j)} \mathrm{Var}_{\tilde{\mathbb{P}}}\big(X_p^{(\sigma)}(0)\big) \\
    &= O_C(1)+\sum_{C_0<p\leq \exp(e^j)}\mathbb{E}\bigg\{\frac{e^{\alpha X_p^{(\sigma)}(0)}}{\mathbb{E}\big\{e^{\alpha X_p^{(\sigma)}(0)}\big\}}\Big(X_p^{(\sigma)}(0)-\mathbb{E}\big\{X_p^{(\sigma)}(0)\big\}\Big)^2\bigg\}\\
    &= O_C\big(1+\sum_p p^{-4\sigma}\big)+\mathrm{Var}_{\mathbb{P}}\big(S_j(0)\big) \asymp \mathrm{Var}_{\mathbb{P}}\big(S_j(0)\big).
\end{align*}
Using a standard Berry-Esseen bound and Lemma \ref{lem:PNT}, we conclude that
\[
    \tilde{\mathbb{\mathbb{P}}}\big(S_{j}(\sigma)\in [v,v+\Delta^{-1})\big)\ll \mathrm{Var}_{{\mathbb{P}}}\big(S_j(\sigma)\big)^{-1/2}\ll j^{-1/2}.
\]
The claim for $\mathbb{Q}$ follows from the same argument, provided one is armed with mean and variance estimates for $S_j(\sigma)$ under $\mathbb{Q}$, as well as a variance estimate under $\tilde{\mathbb{Q}}$ where $\mathrm{d}\tilde{\mathbb{\mathbb{Q}}}/{\mathrm{d}\mathbb{Q}}={e^{\alpha S_{j}(\sigma)}}/{\mathbb{E}_{\mathbb{Q}}\big\{e^{\alpha S_{j}(\sigma)}}\big\}$. We compute these directly, using the expansion
\[
    \frac{e^{\beta (X_p^{(\sigma)}(h_1)-X_p^{(\sigma)}(h_2))}}{\mathbb{E}\big\{e^{\beta (X_p^{(\sigma)}(h_1)-X_p^{(\sigma)}(h_2))}\big\}}=1+\beta \big(X_p^{(\sigma)}(h_1)-X_p^{(\sigma)}(h_2)\big)+O\Big(\beta^2 \big(X_p^{(\sigma)}(h_1)-X_p^{(\sigma)}(h_2)\big)^2\Big),
\]
for $p\leq \exp(e^j)$ large enough, and noting that the error term is
\[
   \ll  \frac{\beta^2|p^{-ih_1}-p^{-ih_2}|^2}{p^{2\sigma}} \leq \frac{\beta^2|h_1-h_2|^2(\log p)^2}{p^{2\sigma}} \ll \frac{1}{p^{2\sigma}}
\]
uniformly for $p\leq \exp(e^j)$ by our assumptions on $\beta$ and $|h_1-h_2|$. We therefore have

\begin{align}\label{eq:meanshift}
    \mathbb{E}_{\mathbb{Q}}\big\{S_j(\sigma)\big\} 
    &= O\Big(\sum_{p\leq \exp(e^j)} p^{-3\sigma}\Big)+\beta\sum_{p\leq \exp(e^j)} \mathbb{E}\big\{X_p^{(\sigma)}(0)\big(X_p^{(\sigma)}(h_1)-X_p^{(\sigma)}(h_2)\big)\big\}\notag\\
    &=O(1)+\frac{\beta}{2}\sum_{p\leq \exp(e^j)}
    \frac{\cos(h_1\log p)-\cos(h_2\log p)}{p^{2\sigma}}  \ll |h_1-h_2|\sum_{p\leq \exp(e^j)} \frac{\log p}{p} +1,
\end{align}
which is $O(1)$ by Lemma \ref{lem:PNT} since $|h_1-h_2|\leq e^{-j}$. Similarly, we find that
\begin{align}
    \mathrm{Var}_{\mathbb{Q}}\big(S_j(\sigma)\big)
    &= \mathrm{Var}_{\mathbb{P}}\big(S_j(\sigma)\big)
    + O\Big(\beta\,|h_1-h_2|\sum_{p} p^{-3\sigma}\log p\Big)
    + O\Big(\beta^2\sum_{p} p^{-4\sigma}\Big) \notag\\
    &= \mathrm{Var}_{\mathbb{P}}\big(S_j(\sigma)\big) + O(1),\label{eq:Qvar}
\end{align}
and $\mathrm{Var}_{\tilde{\mathbb{Q}}}\big(S_j(\sigma)\big)\asymp \mathrm{Var}_{\mathbb{Q}}\big(S_j(\sigma)\big)$.
\qedhere
\end{proof}
\begin{remark}
Letting $\sigma=1/2-k/\log x$ be as in Lemma \ref{lem:LDestimates}, the same argument also yields the bound
\begin{equation}\label{eq:ERRORBOUND}
    \sup_{k\leq \lfloor\log\log\log x\rfloor}\sup_{j\leq \log\log x-k}\sup_{0<h<e^{-j}}\frac{\mathbb{E}\big\{e^{2\alpha S_j(\sigma+ih)-2\alpha(1-q)S_j(\sigma)}\big\}}{\mathbb{E}\big\{e^{2\alpha q S_j(\sigma)}\big\}} <\infty
\end{equation}
for $q<1$ and $\alpha<2$ fixed. This estimate is needed in the proof of Theorem \ref{thm:pseudomoments}.
\end{remark}
\begin{lemma}[Berry-Esseen estimate]\label{lem:berryesseen}  Let $x>0$ be large enough and $h\in [0,1]$ be arbitrary. Let $\sigma=1/2-k/\log x$ for $0\leq k\leq \log\log\log x$. Then there exists a constant $c>0$ such that for $1\leq j\leq \log\log x-k$ and any interval $A\subseteq \mathbb{R}$,
\[
    \mathbb{P}\Big(\big(S_j(\sigma+ih)-S_{j-1}(\sigma+ih)\big)\in A\Big)=\mathbb{P}\big(\mathcal{N}_j\in A\big)+O\big(e^{-ce^{j/2}}\big),
\]
where $\mathcal{N}_j$ is a real, centered Gaussian random variable with variance $V_j(\sigma)$.

Furthermore, for any $\mathbb{Q}$ defined as in the statement of Lemma \ref{lem:LDestimates}, the same estimate holds under $\mathbb{Q}$ upon replacing $\mathcal{N}_j$ on the right-hand side by $\mathcal{N}_j+\nu_j$, where $\nu_j:=\mathbb{E}_{\mathbb{Q}}\{S_j(\sigma+ih)-S_{j-1}(\sigma+ih)\}$
\end{lemma}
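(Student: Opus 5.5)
The plan is to prove the Berry--Esseen estimate for a single increment $Y_j:=S_j(\sigma+ih)-S_{j-1}(\sigma+ih)=\sum_{p\in I_j}X_p^{(\sigma)}(h)$ by controlling its characteristic function. Since the primes are independent and each $Z_p$ is rotation invariant, $Y_j$ has the same law for every $h$, so I take $h=0$. The claimed error $e^{-ce^{j/2}}$ is far stronger than the classical $O(\sum_p p^{-3\sigma}/V_j^{3/2})$ rate. So rather than quoting Berry--Esseen, I will compare characteristic functions directly over a very long frequency range and then apply Esseen's smoothing inequality with truncation parameter $T=e^{ce^{j/2}}$:
\[
\sup_A\big|\mathbb{P}(Y_j\in A)-\mathbb{P}(\mathcal N_j\in A)\big|\ll \int_{-T}^{T}\frac{|\varphi_{Y_j}(\xi)-e^{-V_j\xi^2/2}|}{|\xi|}\,\mathrm{d}\xi+\frac{1}{T}.
\]
Both $\mathbb{P}(Y_j\in A)$ and $\mathbb{P}(\mathcal N_j\in A)$ are differences of CDFs, and $V_j(\sigma)\asymp 1$ by the computation following \eqref{eq:Gaussiancomparisonfinal}, so this inequality applies to intervals.

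First, for $|\xi|\le e^{e^{j-1}/4}$ (so $|\xi|\,p^{-\sigma}\le p^{-1/4}$ for all $p\in I_j$, using $\sigma\ge 1/2-o(1/\log p)$), I expand each factor exactly:
\[
\mathbb{E}e^{i\xi X_p}=1-\tfrac{\xi^2}{2}\mathbb{E}X_p^2+O(|\xi|^3p^{-3\sigma}),\qquad \mathbb{E}X_p^2=\tfrac{1}{2p^{2\sigma}}+\tfrac{1}{8p^{4\sigma}},
\]
where the first moment vanishes. Taking logs and summing gives
\[
\log\varphi_{Y_j}(\xi)=-\tfrac{\xi^2}{2}V_j+O\Big(|\xi|^3\sum_{p\in I_j}p^{-3/2+o(1)}+\xi^4\sum_{p\in I_j}p^{-2+o(1)}\Big),
\]
and the error is $O\big((1+|\xi|)^4e^{-e^{j-1}/3}\big)$. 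In this range $|\varphi-e^{-V_j\xi^2/2}|\ll e^{-V_j\xi^2/4}(1+|\xi|)^4e^{-e^{j-1}/3}$, using the Gaussian factor whenever the error is large relative to $\xi^2$. Integrated against $\mathrm{d}\xi/|\xi|$ (near $0$ the difference is $O(\xi^2)$), this contributes $O(e^{-ce^{j-1}})$, which is better than needed.

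Second, on the range $e^{e^{j-1}/4}<|\xi|\le T$, the Gaussian is negligible, so I must show that $|\varphi_{Y_j}(\xi)|$ itself is tiny. I expect this to be the main obstacle. For each $p$, $|\mathbb{E}e^{i\xi X_p}|$ is a Bessel-type quantity: ignoring the square term, it is $|J_0(\xi p^{-\sigma})|$, which is $\le 1-c\min(1,\xi^2p^{-2\sigma})$. I will restrict to the primes $p\in I_j$ with $p^{\sigma}\ge|\xi|$, which keeps us in the quadratic regime where the expansion above is still valid. Primes with $p^{\sigma}\approx|\xi|$ to $|\xi|^{2}$ exist in $I_j$ as long as $|\xi|\le e^{e^{j}/3}$, and they give
\[
|\varphi|\le \exp\Big(-c\xi^2\sum_{p^{\sigma}\ge|\xi|,\,p\in I_j}p^{-2\sigma}\Big)\le \exp\big(-c\,\xi^2/(|\xi|^{2}\log|\xi|)\cdot|\xi|^{\,\cdot}\big).
\]
More carefully, the primes in $[|\xi|^{2},2|\xi|^{2}]$ contribute $\gg |\xi|^{2}/\log|\xi|\cdot \xi^2|\xi|^{-4}\cdot$, so I will instead use the dyadic block just above $|\xi|^{1/\sigma}$. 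Each such prime gives a factor $1-c$, and there are $\gg|\xi|^{2}/\log|\xi|$ of them, so $|\varphi|\le e^{-c|\xi|^{2}/\log|\xi|}$. If this block falls outside $I_j$ I would fall back on primes near the top of $I_j$, where each factor is $\le 1-c\xi^2/p$, giving $|\varphi|\le \exp(-c\xi^2 e^{-e^j}\cdot e^{e^j}/e^j)$; this is small once $|\xi|\ge e^{j}$. Choosing $T=e^{ce^{j/2}}\le e^{e^{j-1}/4}$ actually makes this second range empty for $c$ small, since $e^{j/2}\ll e^{j-1}$. If that holds, the first step alone closes the argument with $1/T=e^{-ce^{j/2}}$, and I would check this inequality first.

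Finally, for the $\mathbb{Q}$ version, tilting by $\exp(\beta(S_j(\sigma+ih_1)-S_j(\sigma+ih_2)))$ preserves independence across primes. The same local expansion applies, now with a first-order term $i\xi\,\mathbb{E}_{\mathbb{Q}}X_p$ summing to $\nu_j$, and a variance change controlled exactly as in \eqref{eq:meanshift} and \eqref{eq:Qvar}. Restricted to $p\in I_j$, both errors carry the factor $\beta^2|h_1-h_2|^2(\log p)^2p^{-2\sigma}$ or $p^{-3\sigma}$, and these sums are again $O(e^{-ce^{j-1}})$ after centering. The Esseen argument then yields the comparison with $\mathcal N_j+\nu_j$.
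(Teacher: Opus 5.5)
Your argument is correct in substance, but it takes a longer route than the paper, and the premise motivating it is wrong. The paper simply applies the classical Berry--Esseen theorem. The summands $X_p^{(\sigma)}(h)$, $p\in I_j$, are independent, centred and bounded by $Cp^{-\sigma}$, and $V_j(\sigma)\asymp 1$. So the Lyapunov ratio is $\ll\sum_{p\in I_j}p^{-3\sigma}\ll\sum_{p>\exp(e^{j-1})}p^{-3/2}\ll e^{-e^{j-1}/2}$, using $p^{k/\log x}\le e^{ke^{-k}}\ll 1$ for $p\le\exp(e^j)$. This is already much smaller than $e^{-ce^{j/2}}$, so the classical rate implies the claim. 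Your own Step 1 remainders are governed by exactly this sum.

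What you actually do is reprove Berry--Esseen in this special case. For $|\xi|\le e^{e^{j-1}/4}$ the remainder in $\log\varphi_{Y_j}(\xi)+\tfrac{\xi^2}{2}V_j$ is $\ll\xi^2\big(|\xi|\sum_{p\in I_j}p^{-3\sigma}+\xi^2\sum_{p\in I_j}p^{-4\sigma}\big)=o(\xi^2)$, and Esseen's smoothing inequality with $T\le e^{e^{j-1}/4}$ finishes the proof. Your choice $T=e^{ce^{j/2}}$ satisfies this for every $j\ge1$ once $c\le e^{-1/2}/4$. So the Step 2 range is empty, and that discussion (garbled as written) should simply be deleted. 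The paper's route costs one line; yours is self-contained and exhibits the stronger rate $e^{-e^{j-1}/4}$, but gains nothing used later.

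One small repair: near $\xi=0$ use $|\varphi_{Y_j}(\xi)-e^{-V_j\xi^2/2}|\ll|\xi|^3\sum_{p\in I_j}p^{-3\sigma}$, which your expansion gives. The bound $O(\xi^2)$ alone only makes that piece of the integral bounded, not small.

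For the $\mathbb{Q}$ part, $\mathcal{N}_j+\nu_j$ still has the $\mathbb{P}$-variance $V_j$. So besides centring you must show $\mathrm{Var}_{\mathbb{Q}}(Y_j)=V_j+O(e^{-ce^{j/2}})$. Citing \eqref{eq:Qvar} gives only $O(1)$, which does not suffice. The estimate is nevertheless true. Write $D_p=X_p^{(\sigma)}(h_1)-X_p^{(\sigma)}(h_2)$; on $I_j$ one has $\beta|h_1-h_2|\log p\ll1$, hence $\beta|D_p|\ll p^{-\sigma}$. Then:
\begin{itemize}
\item The linear part of the tilt contributes $\beta\,\mathbb{E}\{X_p^2D_p\}\ll p^{-4\sigma}$, since the term built only from the $Z_p/p^{s}$ component vanishes under $Z_p\mapsto -Z_p$.
\item The quadratic part $O(\beta^2|h_1-h_2|^2(\log p)^2p^{-2\sigma})$ multiplies a second moment of size $p^{-2\sigma}$.
\item $(\mathbb{E}_{\mathbb{Q}}X_p)^2\ll p^{-4\sigma}$.
\end{itemize}
Hence the variance moves by $O(p^{-4\sigma})$ per prime, i.e.\ by $O(e^{-e^{j-1}})$ over $I_j$. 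Summing the factor $\beta^2|h_1-h_2|^2(\log p)^2p^{-2\sigma}$ on its own over $I_j$ would give $O(1)$. That extra $p^{-2\sigma}$ is exactly what your sentence ``these sums are again $O(e^{-ce^{j-1}})$'' needs to make explicit. The paper is equally terse here: it just applies Berry--Esseen to the $\mathbb{Q}$-centred sum.
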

\begin{proof}
    We begin with the claim for $\mathbb{P}$, which is essentially Lemma 20 in \cite{FHK1}. Note that the $X_p^{(\sigma)}(h)$ involved are centered, have variance in $[C^{-1},C]$ for some $C>0$, and satisfy $|X_p^{(\sigma)}(h)|<Cp^{-\sigma}$ deterministically. The Berry-Esseen theorem (Corollary 17.2 in \cite{Bhattacharya}) thus yields
    \begin{align*}
         \Big|\mathbb{P}\Big(\big(S_j(\sigma+ih)-S_{j-1}(\sigma+ih)\big)\in A\Big)-\mathbb{P}\big(\mathcal{N}_j\in A\big)\Big|&\ll \sum_{p\in I_j}p^{-3\sigma}= O\big(e^{-ce^{j/2}}\big).
    \end{align*}
    Under $\mathbb{Q}$, we simply apply the Berry-Esseen theorem to $S_j(\sigma+ih)-S_{j-1}(\sigma+ih)-\nu_j=\sum_{p\in I_j} X_p^{(\sigma)}(h)-\mathbb{E}_{\mathbb{Q}}\big\{X_p^{(\sigma)}(h)\big\}$.
\end{proof}

\section{Discretisation}

\begin{lemma}[Two-point estimates]\label{lem:LD} Let $C>0$ be arbitrary and $x>0$ be large enough. Let $0\leq k\leq \log\log\log x$, $\sigma=1/2-k/\log x$, $j\leq \log\log x-k$. Finally, let $-e^{-j-1}\leq h_1,h_2\leq e^{-j-1}$, $0\leq V_1\leq Cj$, and $0\leq V_2\leq e^{2j}$. Then uniformly in all of these parameter ranges,
\[
     \mathbb{P}\big(S_j(\sigma)\geq V_1, S_{j}(\sigma+ih_1)-S_{j}(\sigma+ih_2)\geq V_2\big)\ll_C  \exp\Big(\!-\frac{V_1^2}{j}-\frac{cV_2^{3/2}}{e^{j}|h_2-h_1|}\Big)
\]
for a constant $c>0$ which depends on $C$. Furthermore, if $\lambda \leq C|h_1-h_2|^{-1}e^{-j}$,
\begin{equation}\label{eq:O1MGF}
        \mathbb{E}\big\{\!\exp\!\big(\lambda(S_j(\sigma+ih_1)-S_j(\sigma+ih_2)\big)\big\}\ll 1.
\end{equation}
\end{lemma}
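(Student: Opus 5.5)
The plan is to prove the moment bound \eqref{eq:O1MGF} first, and then obtain the two-point tail estimate from it by a Chernoff argument combined with the tilting used in Lemma \ref{lem:LDestimates}.

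\emph{Step 1: the moment bound.} Write $D_p:=X_p^{(\sigma)}(h_1)-X_p^{(\sigma)}(h_2)$. Then $S_j(\sigma+ih_1)-S_j(\sigma+ih_2)=\sum_{p\leq \exp(e^j)}D_p$, a sum of independent centred terms with
\[
|D_p|\ll p^{-\sigma}\min\big(1,|h_1-h_2|\log p\big).
\]
Since $\lambda|h_1-h_2|\log p\leq C e^{-j}\log p\leq C$ for $p\leq\exp(e^j)$, each $\lambda D_p$ is bounded. A Taylor expansion therefore gives
\[
\log\mathbb{E}\, e^{\lambda D_p}=\frac{\lambda^2}{2}\mathbb{E}D_p^2+O\big(\lambda^3|D_p|_\infty^3\big),\qquad \frac{\lambda^2}{2}\mathbb{E}D_p^2\ll \lambda^2|h_1-h_2|^2\frac{(\log p)^2}{p^{2\sigma}}.
\]
Summing over $p$ with Lemma \ref{lem:PNT} (the $m=2$ bound; note $p^{-2\sigma}\ll p^{-1}$ since $p\leq x^{e^{-k}}$) yields
\[
\sum_{p}\frac{\lambda^2}{2}\mathbb{E}D_p^2\ll \lambda^2|h_1-h_2|^2e^{2j}\ll C^2 .
\]
The cubic terms are bounded by the same quantity. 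Hence $\log\mathbb{E}\exp\bigl(\lambda\sum_p D_p\bigr)=O_C(1)$, which is \eqref{eq:O1MGF}.

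\emph{Step 2: the tail bound.} Set $\delta:=e^j|h_1-h_2|\leq 2/e$. For general $\lambda>0$, split the primes at $P:=\exp\bigl(c_0/|h_1-h_2|\bigr)$. For $p\leq P$ the bound $\lambda|h_1-h_2|\log p$ may exceed a constant, but the quadratic estimate above still holds with constant $e^{O(\lambda |h_1-h_2|\log p)}$ as long as $\lambda p^{-\sigma}\ll 1$. For $p>P$ use the trivial bound $|D_p|\ll p^{-\sigma}$ together with independence. Optimising the resulting mgf bound $\exp\bigl(O(\lambda^2\delta^2)+\dots\bigr)$ in a Chernoff inequality, where the regime $V_2\leq e^{2j}$ caps $\lambda$ and produces the $3/2$ exponent instead of a pure quadratic, gives
\[
\mathbb{P}\Big(\sum_p D_p\geq V_2\Big)\ll \exp\Big(-\frac{cV_2^{3/2}}{\delta}\Big).
\]
This balancing step is the one I would expect to require the most care.

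\emph{Step 3: combining with the $S_j(\sigma)$ constraint.} To attach the event $\{S_j(\sigma)\geq V_1\}$, use the exponential Chebyshev inequality with the joint exponent $\alpha S_j(\sigma)+\lambda\sum_pD_p$, taking $\alpha=2V_1/j$ (bounded by $2C$). The mixed covariance is
\[
\alpha\lambda\sum_p\mathbb{E}\bigl\{X_p(0)D_p\bigr\}\ll \alpha\lambda|h_1-h_2|\sum_{p}\frac{\log p}{p}\ll \alpha\lambda\delta .
\]
For the $\lambda$ chosen in Step 2, this cross term costs at most a constant factor in the relevant range, as in \eqref{eq:meanshift}. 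The MGF then factorises up to $O(1)$ as $e^{\alpha^2 j/4}\cdot e^{O(\lambda^2\delta^2)}$, and the Chernoff bound yields
\[
\exp\Big(-\frac{V_1^2}{j}-\frac{cV_2^{3/2}}{\delta}\Big),
\]
where the Gaussian factor in $V_1$ comes exactly as in Lemma \ref{lem:LDestimates}. The main obstacle is controlling the cross term uniformly when $\lambda$ is large, i.e.\ when $V_2$ is big; if it is not negligible, I would absorb it by reducing $c$.
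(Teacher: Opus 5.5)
Your overall route is the paper's: a Chernoff bound for the joint exponential $\alpha S_j(\sigma)+\lambda\sum_p D_p$, a prime-by-prime Taylor expansion of the Laplace transform, $\alpha\asymp V_1/j$, and a choice of $\lambda$ capped via $V_2\le e^{2j}$. Step 1 is fine. The gap is in Step 2. That is the only place the exponent $3/2$ can come from, you do not actually carry it out, and the mechanism you describe fails.

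On the range of Step 1 ($\lambda\delta\le C$), Chernoff only gives $\exp(O(C^2)-CV_2/\delta)$. This is much weaker than $\exp(-cV_2^{3/2}/\delta)$ for large $V_2$. So you need $\log\mathbb{E}\exp(\lambda\sum_pD_p)=O(\lambda^2\delta^2)+O(1)$ for $\lambda$ up to about $|h_1-h_2|^{-1}=e^j/\delta$. Your justification does not give this. You propose a factor $e^{O(\lambda|h_1-h_2|\log p)}$, valid ``as long as $\lambda p^{-\sigma}\ll 1$''. But imposing $\lambda p^{-\sigma}\ll1$ for all $p>C_0$ forces $\lambda=O(1)$. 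And the factor $e^{O(\lambda|h_1-h_2|\log p)}$ at $p\approx\exp(e^j)$ is $e^{O(\lambda\delta)}$, i.e.\ $e^{O(\sqrt{V_2})}$ for the relevant $\lambda\asymp\sqrt{V_2}/\delta$. The quadratic sum then becomes $\lambda^2\delta^2e^{O(\lambda\delta)}$, not $O(\lambda^2\delta^2)$. The split at $P$ does nothing either, since $|h_1-h_2|\log p\le\delta\le 2/e$ for every $p\le\exp(e^j)$.

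The missing point is to keep the factor $p^{-\sigma}$. You have $\lambda\|D_p\|_\infty\ll\lambda|h_1-h_2|\,p^{-1/2}\log p\ll\lambda|h_1-h_2|$, using that $p^{-1/2}\log p$ is bounded and $p^{k/\log x}\le e^{1/e}$ in range. Hence $\log\mathbb{E}e^{\lambda D_p}\ll\lambda^2\mathbb{E}D_p^2$ uniformly for $\lambda\le|h_1-h_2|^{-1}$; this is the paper's range $1\le\lambda_2\le|h_2-h_1|^{-1}$.

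Then take $\lambda=c\sqrt{V_2}/\delta$ explicitly. The hypothesis $V_2\le e^{2j}$ is exactly what gives $\lambda|h_1-h_2|=c\sqrt{V_2}e^{-j}\le c$. The joint Laplace transform is then $\exp(\alpha^2j/4+O(c^2V_2)+O_C(c\sqrt{V_2}))$. The last term is your cross term $\alpha\lambda\delta$, which is of size $\sqrt{V_2}$, not $O(1)$. So your appeal to \eqref{eq:meanshift} in Step 3 is wrong as stated, since that estimate needs $\beta\delta\lesssim 1$; the term must instead be absorbed as in your fallback.

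That absorption works once $V_2\ge K\delta$ with $K$ a large constant, because then $V_2^{3/2}/\delta\ge\sqrt{K}\,V_2$ and $V_2^{3/2}/\delta\ge K\sqrt{V_2}$, so both errors are dominated by $\lambda V_2=cV_2^{3/2}/\delta$. For $V_2<K\delta$ the factor $\exp(-cV_2^{3/2}/\delta)$ is $\gg_K 1$, and the claim reduces to \eqref{eq:density}. Your proposal does not address this case.
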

\begin{proof}
    By a Chernoff bound, for any choice of $\lambda_1,\lambda_2>0$, this probability is bounded by
    \begin{equation}\label{eq:chernoff}
            \mathbb{E}\Big\{\!\exp\Big({\lambda_1 S_k(\sigma)+\lambda_2\big(S_k(\sigma+ih_1)-S_k(\sigma+ih_2)\big)}\Big)\Big\}\exp\big({-\lambda_1V_1-\lambda_2V_2}\big).
    \end{equation}
    If $\lambda_1\leq C$ and $1\leq \lambda_2\leq |h_2-h_1|^{-1}$, we can estimate this Laplace transform by Taylor expanding the exponential as in the proof of Lemma \ref{lem:largedeviations}. This yields 
    \begin{equation*}
                             \mathbb{E}\Big\{\!\exp\Big({\lambda_1 S_j(\sigma)+\lambda_2\big(S_j(\sigma+ih_1)-S_j(\sigma+ih_2)\big)}\Big)\Big\}\ll_C  \exp\bigg(\frac{\lambda_1^2}{4}j+c\lambda_2e^{j}|h_1-h_2|+c\big(\lambda_2e^j |h_2-h_1|\big)^2\bigg)
    \end{equation*}
    for some constant $c>0$ depending on $C$. The first claim follows by using this in \eqref{eq:chernoff} and picking
    \[
        \lambda_1= V_1/j, \quad \lambda_2= c\sqrt{V_2}\cdot e^{-j}|h_2-h_1|^{-1},
    \]
    assuming that $V_2$ is greater than a sufficiently large constant times $e^j|h_2-h_1|$ (and in turn $e^{2j}|h_1-h_2|^2$). We can do this without loss of generality, since the desired bound reduces to \eqref{eq:density} for smaller $V_2$. The second claim follows by a similar argument.
\end{proof}

\begin{lemma}[Maximum bound]\label{lem:chaining} Let $C>0$ be arbitrary. Then uniformly in all large $x$, $0\leq k\leq \log\log\log x$, $\sigma=1/2-k/\log x$, $j\leq \log\log x-k$ and $0\leq V\leq Cj$,
    \begin{equation}\label{eq:maximumbound}
                \mathbb{P}\Big(\max_{h\in [0,e^{-j})}S_j(\sigma+ih)>V\Big)\ll_C \exp(-V^2/j).
    \end{equation}
\end{lemma}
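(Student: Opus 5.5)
The plan is a standard chaining argument on the interval $[0,e^{-j})$, driven by the two-point estimate of Lemma \ref{lem:LD}. For each $m\geq 0$, let $H_m=\{\ell e^{-j-m}: 0\leq \ell< e^m\}$ be the dyadic-type grid of mesh $e^{-j-m}$ in $[0,e^{-j})$, so that $\#H_m\leq e^m+1$ and $H_m\subseteq H_{m+1}$ after suitable rounding. For $h\in[0,e^{-j})$, let $h_m$ be the nearest point of $H_m$ to $h$, with $h_0=0$. By continuity of $h\mapsto S_j(\sigma+ih)$ (a finite sum of smooth functions), we have
\[
S_j(\sigma+ih)=S_j(\sigma)+\sum_{m\geq 0}\big(S_j(\sigma+ih_{m+1})-S_j(\sigma+ih_m)\big).
\]
We fix weights $\epsilon_m=1/(e(m+1)^2)$, so that $\sum_m\epsilon_m<1$. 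On the event that the maximum exceeds $V$, we distinguish two cases. Either $S_j(\sigma)>V-q$ for the integer $q\geq 0$ at which $S_j(\sigma)\in(V-q-1,V-q]$ and some increment exceeds roughly $(q+1)\epsilon_m$, or $S_j(\sigma)>V-1$. This is where the $q$-sum and the events $B_q$ referenced later in the paper come from.

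The core step is to bound, for each $q$ and $m$, the probability that $S_j(\sigma)\geq V-q-1$ and that for some $v\in H_{m+1}$ with neighbour $v_*\in H_m$ (so $|v-v_*|\leq e^{-j-m}$) we have $S_j(\sigma+iv)-S_j(\sigma+iv_*)\geq q\epsilon_m$. Translation invariance in law lets us recentre so that Lemma \ref{lem:LD} applies with $V_1=\max(V-q-1,0)\leq Cj$ and $V_2=q\epsilon_m$. A union bound over the at most $e^{m+1}+1$ pairs gives
\[
\ll e^{m}\exp\Big(-\frac{(V-q-1)_+^2}{j}-c\,\frac{(q\epsilon_m)^{3/2}}{e^{-m}}\Big)=\exp\Big(-\frac{(V-q-1)_+^2}{j}+m-c\,e^{m}\frac{q^{3/2}}{(m+1)^3}\Big).
\]
Since $V\leq Cj$, we have $(V-q-1)^2/j\geq V^2/j-2C(q+1)$. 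The factor $e^{2C(q+1)}$ is beaten by $e^{-ce^m q^{3/2}/(m+1)^3}$ once $q$ exceeds a constant depending on $C$, for every $m$. For $q$ below that constant the loss is $O_C(1)$, but then we still need summability in $m$. To get it, we split the increment threshold so that each level also gets an $m$-dependent floor, i.e.\ we require the increment to exceed $(q+1)\epsilon_m$ rather than $q\epsilon_m$. Then the term $e^{m}(m+1)^{-3}$ in the exponent dominates $m$, and the double sum over $q,m$ converges to $O_C(1)\cdot e^{-V^2/j}$. The remaining case $S_j(\sigma)>V-1$ is bounded directly by \eqref{eq:density}, summed over unit windows.

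The main obstacle is bookkeeping the thresholds so that both sums converge simultaneously. The cross term $2Vq/j\leq 2Cq$ must be absorbed by the $V_2^{3/2}$ decay, which requires $V_2$ to grow linearly in $q$, while the entropy factor $e^m$ must be absorbed by the $e^m$ gain in the two-point bound. This is exactly why Lemma \ref{lem:LD} gives a $V_2^{3/2}$ rather than a merely linear exponent in $V_2$. We also need to check its hypotheses: $V_2\leq e^{2j}$ can be assumed, since larger increments are impossible deterministically because $|S_j|\ll e^{j/2}$, and $|h_i|\leq e^{-j-1}$ holds after splitting $[0,e^{-j})$ into $O(1)$ pieces.
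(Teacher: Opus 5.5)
Your proposal is essentially the paper's proof. Like you, the paper handles a top window directly via \eqref{eq:density} (it takes $\{S_j(\sigma)>V-2\}$) and decomposes the rest by the level $q$ of $S_j(\sigma)$, lumping $\{S_j(\sigma)\le 0\}$ into the single event $B_{V-2}$. It then chains over $e$-adic grids with weights $1/(e(\ell+1)^2)$, applies Lemma \ref{lem:LD} with $V_1=V-q-1$ and $V_2=q/(e(\ell+1)^2)$, and absorbs the cross term $2V(q+1)/j\le 2C(q+1)$ into the $q^{3/2}$ decay. Two details of your write-up need correcting.

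\emph{The $(q+1)\epsilon_m$ thresholds.} The switch to thresholds $(q+1)\epsilon_m$ is unnecessary, and with $\epsilon_m=1/(e(m+1)^2)$ it is invalid at $q=1$. There the chain increments need only sum to more than $1$, whereas $2\sum_m\epsilon_m=\pi^2/(3e)>1$, so some increment exceeding $2\epsilon_m$ is not forced. You already handle the top window $\{S_j(\sigma)>V-1\}$ separately, so every remaining $q$ is at least $1$. The natural threshold $q\epsilon_m\ge\epsilon_m$ then already makes $\sum_m e^{m-ce^m/(m+1)^3}$ converge.

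\emph{The deterministic bound on $S_j$.} The claim $|S_j|\ll e^{j/2}$ is false. Deterministically one only has $|S_j(\sigma+ih)|\ll\sum_{p\le\exp(e^j)}p^{-\sigma}$, which is of order $\exp(e^j/2-j)$, so this cannot guarantee the hypothesis $V_2\le e^{2j}$ of Lemma \ref{lem:LD}. Instead, cap $q$ as the paper does: on $\{S_j(\sigma)\le 0\}$, ask only for a total increment of size $V$. Then $V_2\le V\le Cj$, which is at most $e^{2j}$ once $j$ is large in terms of $C$; smaller $j$ are trivial.

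Your splitting of $[0,e^{-j})$ into $O(1)$ pieces, to meet the hypothesis $|h_1|,|h_2|\le e^{-j-1}$ of Lemma \ref{lem:LD}, is a correct way to handle a point the paper passes over silently.
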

\begin{proof} 
 By the large deviations estimate in \eqref{eq:density}, 
    \[
        \mathbb{P}\Big(\max_{h\in [0,e^{-j})}S_j(\sigma+ih)>V\Big)\ll_C \exp(-V^2/j)+\mathbb{P}\Big(\max_{h\in [0,e^{-j})}S_j(\sigma+ih)>V, S_j(\sigma)\leq V-2\Big).
    \]
    To bound the probability on the right-hand side, we use a standard chaining argument which was  adapted to this setting in \cite{ABH} (Proposition 2.5). We include this argument here for completeness.
    Let $$H_\ell=[0,e^{-j}]\cap e^{-j-\ell}\mathbb{Z}, \quad\ell\geq 0,$$ and assume without loss of generality that $V$ is an integer. For every $q\in \{0,1,...,V-3\}$, let $B_q$ be the event that $S_j(\sigma)\in [V-q-1,V-q]$, and $B_{V-2}$ be the event that $S_j(\sigma)\leq 0$. Then
    \begin{align}\label{eq:qsum}
        \mathbb{P}\Big(\max_{h\in [0,e^{-j})}S_j(\sigma+ih)>V, S_j(\sigma)\leq V-2\Big) \leq \sum_{q=0}^{V-2} \mathbb{P}\Big(B_q\cap \Big\{\max_{h\in [0,e^{-j}]}S_j(\sigma+ih)-S_j(\sigma)\geq q\Big\}\Big).
    \end{align}
    We now decompose the summands on the right-hand side. Let $(h_\ell)_{\ell\geq 0}$ be an $e$-adic sequence tending to $h$ with $\ell$, satisfying $h_0=0$ and $h_\ell\in H_{\ell}$. Then by continuity of $h\mapsto S_j(\sigma+ih)$,
    \[
        S_j(\sigma+ih)-S_j(\sigma)=\sum_{\ell\geq 0} S_j(\sigma+ih_{\ell+1})-S_j(\sigma+ih_\ell)
    \]
    and the series on the right-hand side converges almost surely.  Furthermore, since $\sum_{\ell=0}^\infty\frac{1}{e(\ell+1)^2}\leq 1$, 
    \[
        \big\{S_j(\sigma+ih)-S_j(\sigma)\geq q\big\} \subset \bigcup_{\ell\geq 0}\Big\{S_j(\sigma+ih_{\ell+1})-S_j(\sigma+ih_\ell)\geq \frac{q}{e(\ell+1)^2}\Big\}.
    \]
    Since this holds for any such $(h_\ell)_{\ell\geq 0}$, we conclude by a union bound that the right-hand side in \eqref{eq:qsum} is
    \begin{equation}\label{eq:chainingfinal}
        \leq \sum_{q=0}^{V-2}\sum_{\ell\geq 0}\sum_{h\in H_\ell}2\cdot \mathbb{P}\Big(B_q\cap\Big\{ S_j(\sigma+ih)-S_j(
        \sigma+ih_*)\geq \frac{q}{e(\ell+1)^2}\Big\}\Big),
    \end{equation}
    where $h_*\neq h$ denotes the closest point to $h$ in $H_{\ell+1}$. (Should there be two such points, we let $h_*$ be the smaller of the two and note that the bound still holds due to the additional factor of $2$, since the probability only depends on $|h-h_*|$.)
    Noting that $\#H_{\ell}=e^\ell+1$, we can use the joint large deviations estimate of Lemma \ref{lem:LD} to estimate each summand, and conclude that \eqref{eq:chainingfinal} is
    \[
        \ll {\sum_{q=0}^{V-2}\sum_{\ell\geq 0} e^\ell \exp\Big(-\frac{(V-q-1)^2}{j}-ce^\ell\frac{q^{3/2}}{(\ell+1)^3}\Big) \ll \sum_{q=0}^{V-2}  e^{-{(V-q-1)^2}/{j}-cq^{3/2}}\ll e^{-V^2/j}.}\qedhere
    \]
\end{proof}
\begin{remark}\label{rem:symmetry}
    The conclusions of both Lemma \ref{lem:LD} and Lemma \ref{lem:chaining} hold upon replacing $S_j$ by $-S_j$, by the same proof. 
\end{remark}

\section{Ballot theorem}
\begin{proposition}\label{cor:Hittingprob}
Fix $0<\kappa< 1$ and $\delta>0$. Then there exist constants $C,C'>0$ depending only on $\kappa$ and $\delta$ such that the following holds. Let $\{\mathcal{N}_j\}_{j\geq 1}$ be a collection of independent, centered Gaussian random variables with variances $\mathbb{E}\{\mathcal{N}_j^2\}\in(\kappa,\kappa^{-1})$ for each $j$, { satisfying  $|\sum_{j\leq k} \mathbb{E}\{\mathcal{N}_j^2\}-k/2|<\delta$ for all $k\geq 1$. Let $\mathcal{G}_k=\sum_{j\leq k}\mathcal{N}_j$ for each $k\geq 1$. For any $t>1$, $4<A\leq t$, let $U_A(s)=\infty$ for $s< A/4$, and for $s\geq A/4$, let $U_A$ be one of the following two functions:}
\[
    U_A(s)=A+s+2\log\big(1+s\land(t-s)\big)  \text{ or } U_A(s)=A+s\Big(1-\frac{3}{4}\frac{\log t}{t}\Big)+10^3\log\big(1+s\land(t-s)\big).
\]
Then, for either choice of $U_A$, and for any $t/2\leq k\leq t$ and $w\in [0, U_A(k))$,
    \[
        \mathbb{P}\Big(\mathcal{G}_k>w\text{ $\mathrm{and}$ }\,\mathcal{G}_j\leq U_A(j)+1 \mathrm{\,\, for\,\, all\,\, } j\leq k \Big)\leq C\cdot \frac{A\big(U_A(k)-w+C'\big)}{k}\frac{e^{-w^2/k}}{\sqrt{k}}
    \]
   
\end{proposition}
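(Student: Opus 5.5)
We reduce the statement to a ballot-type estimate for a Brownian bridge under a linear barrier. The first step removes the inhomogeneous variances: since $\mathrm{Var}(\mathcal{G}_k)=k/2+O(\delta)$ and each variance lies in $(\kappa,\kappa^{-1})$, we embed $\mathcal{G}$ into a standard Brownian motion $B$ via $\mathcal{G}_j=B(\tau_j)$ with $\tau_j=\sum_{i\le j}\mathbb{E}\{\mathcal{N}_i^2\}$. Then $|\tau_j-j/2|<\delta$, so the barrier at the discrete times $\tau_j$ is a bounded perturbation of $U_A(j)+1$ written in the time variable $2\tau_j$. With this embedding we decompose the event according to the endpoint $\mathcal{G}_k\in[w+v,w+v+1)$ for integers $0\le v\le U_A(k)+1-w$. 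The Gaussian density gives
\[
\mathbb{P}\big(\mathcal{G}_k\in[w+v,w+v+1)\big)\ll e^{-(w+v)^2/k}/\sqrt{k}\le e^{-w^2/k-2vw/k}/\sqrt{k}.
\]
Conditionally on this endpoint, the walk is a Gaussian bridge from $0$ to $w+v$.

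The core step is a bridge ballot estimate. For a Gaussian bridge of length $k$ from $0$ to $z\le U_A(k)+1$, we want to bound the probability that it stays below $U_A(j)+1$ at every $j\le k$ by $\ll A(U_A(k)-z+C')/k$. To do this we subtract the linear interpolation $\ell(j)=(j/k)\,z$, so the barrier becomes
\[
\tilde U(j)=U_A(j)+1-\ell(j),
\]
which starts near $A$ and ends near $U_A(k)+1-z$. For the first choice of $U_A$ we have $U_A(j)-\ell(j)\le A+(1-j/k)(U_A(k)-z)\cdot O(1)+j(1-z/k)+\ldots$; the key point is that both $U_A(j)$ and the interpolation are linear in $j$ with comparable slopes, up to the concave logarithmic bump $2\log(1+j\wedge(t-j))$ (resp.\ $10^3\log(\cdot)$). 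The slope mismatch is $j(1-\tfrac34\frac{\log t}{t})$ versus $j\,U_A(k)/k$ plus a term proportional to $(U_A(k)-z)$, and because $k\ge t/2$ it is absorbed into the endpoint gap. We therefore reduce to bounding the probability that a centered bridge stays below $a+b\cdot(\text{linear})+c\log(1+j\wedge(k-j))$, where $a=A$ and $b=U_A(k)-z+O(1)$. We then invoke the standard ballot theorem with logarithmic barrier perturbations (Bramson's lemma; see also Lemma 2.3 in \cite{ArguinSurvey} and the Gaussian results in \cite{ADH}). It states that for a random walk bridge of length $k$ with increments of bounded variance ratio, the probability of staying below $a+f(j)$, with $f(j)\le c\log(1+j\wedge(k-j))$, is $\ll (1+a)(1+b)/k$. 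This is the \emph{main obstacle}: a fully self-contained proof requires the Brownian bridge reflection formula $1-e^{-2ab/k}\le 2ab/k$ for a linear barrier, together with a careful treatment of the log bump. For the bump, we split $[0,k]$ into dyadic blocks near each endpoint and use that the bridge typically sits at distance $\sqrt{j}$ from the barrier, so $\log j$ corrections only change constants. We also control the discrete-versus-continuous discrepancy with the $O(\delta)$ time perturbation through the Brownian embedding, which costs only an additive constant in $a$ and $b$. For $j<A/4$ there is no barrier at all; here we simply condition on $\mathcal{G}_{\lceil A/4\rceil}$, which has typical size $O(\sqrt{A})\le A$, so restarting from there keeps the factor $\ll A$.

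Finally, we sum over $v$. The total is
\[
\ll \frac{e^{-w^2/k}}{k^{3/2}}\sum_{v\ge0}A\,(U_A(k)-w-v+C')\,e^{-2vw/k}.
\]
When $w\gg k^{1/2}$ the exponential factor makes the sum over $v$ geometric; in the relevant range $w\asymp k$ (since $U_A(k)\approx k$), $2w/k$ is bounded below. This gives the claimed bound $C A(U_A(k)-w+C')e^{-w^2/k}/k^{3/2}$. For small $w$, where this summation is lossy, the bound $A(U_A(k)-w)/k\cdot k^{-1/2}$ with $U_A(k)-w\asymp k$ already dominates the trivial probability $\ll k^{-1/2}$ up to the factor $A\ge 4$, so that case is immediate.
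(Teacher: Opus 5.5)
Your core step is the paper's own route. The paper's proof is one sentence: condition on $\mathcal{G}_{\lceil A/4\rceil}$ and on $\mathcal{G}_k$, then apply the Gaussian ballot theorem with logarithmic barrier perturbations (Proposition 5 of \cite{FHK1}). Two points in your reduction need tightening.

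First, your slope discussion is muddled: the slopes $1$ and $z/k$ need not be comparable, and they do not need to be. What you want is the exact identity
\[
U_A(j)-\tfrac{j}{k}z=\big(1-\tfrac{j}{k}\big)A+\tfrac{j}{k}\big(U_A(k)-z\big)+\beta(j),\qquad \beta(j)=c\log\big(1+j\land(t-j)\big)-\tfrac{j}{k}\,c\log(1+t-k),
\]
with $c\in\{2,10^3\}$. Using $k\ge t/2$ and concavity of $\log$, one gets $0\le\beta(j)\le 2c\log(1+j\land(k-j))$. So the barrier is the linear interpolation of the two end gaps plus an admissible bump.

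Second, for $j<A/4$, the remark that $\mathcal{G}_r$ (with $r=\lceil A/4\rceil$) is typically $O(\sqrt A)$ is not an argument. You must integrate the random start gap $U_A(r)+1-\mathcal{G}_r$ against the bridge law. This works, because given $\mathcal{G}_k=z\ge 0$ its conditional mean is at most $U_A(r)+1\ll A$.

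The genuine gap is the last step: passing from the unit windows $\{\mathcal{G}_k\in[w+v,w+v+1)\}$ to the tail $\{\mathcal{G}_k>w\}$.
\begin{itemize}
\item Since $\sum_{v\ge 0}e^{-2vw/k}\asymp 1+k/w$, your summation gives the claimed bound only for $w\ge ck$.
\item For $\sqrt{k}\ll w\ll k$, a range your case split omits, it produces an extra factor $k/w$.
\item Your fallback for small $w$ confuses window and tail. It is true that $\mathbb{P}(\mathcal{G}_k\in[w,w+1))\ll k^{-1/2}$, but $\mathbb{P}(\mathcal{G}_k>w)\asymp 1$ for $w\lesssim\sqrt{k}$.
\end{itemize}

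This cannot be repaired, because the inequality is false in that range. Take all variances equal to $1/2$, $A=5$, $k=t$ and $w=0$. Since $U_A(j)\ge A+j$, a union bound gives $\mathbb{P}(\exists j:\mathcal{G}_j>U_A(j)+1)\le\sum_{j\ge 2}e^{-(A+j)^2/j}\le\sum_{j\ge 2}e^{-2A-j}<e^{-10}$. So the left-hand side is at least $1/2-e^{-10}$. The right-hand side is $5C(U_A(k)+C')/k^{3/2}\asymp k^{-1/2}\to 0$. For fixed $A$, the factor $k/w$ is likewise genuinely present when $\sqrt{k}\le w=o(k)$.

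What your argument does prove is the unit-window bound
\[
\mathbb{P}\big(\mathcal{G}_k\in[w,w+1),\ \mathcal{G}_j\le U_A(j)+1\ \forall j\le k\big)\ll\frac{A\big(U_A(k)-w+C'\big)}{k}\,\frac{e^{-w^2/k}}{\sqrt{k}}\qquad(0\le w<U_A(k)).
\]
Summing this gives the stated tail bound for $w\ge ck$, with constants now depending also on $c$. That is the regime the paper's applications actually need. There, the integrals in $V$ are dominated by $V$ near $t$, and the range $V\le t/2$ is handled by the plain Gaussian tail. Note that Lemma~\ref{lem:largedeviations}, as stated, fails at $V=0$ for the same reason.
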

\begin{proof}
   This follows directly from Proposition 5 in \cite{FHK1} by conditioning on the values of the random walk at times $\lceil A/4\rceil$ and $k$. 
\end{proof}

\bibliographystyle{abbrv}
\bibliography{biblio}

\end{document}